\newtheorem{thm}{Theorem}[section]
\newtheorem{cor}[thm]{Corollary}
\newtheorem{lem}[thm]{Lemma}
\theoremstyle{definition}
\newtheorem{defn}[thm]{Definition}
\theoremstyle{remark}
\newtheorem{rem}[thm]{Remark}
\theoremstyle{conclusion}
\theoremstyle{conjecture}
\newtheorem{conj}[thm]{Conjecture}
\numberwithin{equation}{section}
\begin{document}
\title[The method of scaling spheres]{Liouville type theorems for fractional and higher order H\'{e}non-Hardy type equations via the method of scaling spheres}

\author{Wei Dai, Guolin Qin}

\address{School of Mathematical Sciences, Beihang University (BUAA), Beijing 100191, P. R. China, and LAGA, UMR 7539, Institut Galil\'{e}e, Universit\'{e} Sorbonne Paris Nord, 93430 - Villetaneuse, France}
\email{weidai@buaa.edu.cn}

\address{Institute of Applied Mathematics, Chinese Academy of Sciences, Beijing 100190, and University of Chinese Academy of Sciences, Beijing 100049, P. R. China}
\email{qinguolin18@mails.ucas.ac.cn}

\thanks{Wei Dai is supported by the NNSF of China (No. 11971049 and No. 11501021), the Fundamental Research Funds for the Central Universities and the State Scholarship Fund of China (No. 201806025011).}

\begin{abstract}
In this paper, we aim to develop the \emph{(direct) method of scaling spheres}, its \emph{integral forms} and the \emph{method of scaling spheres in local way}. As applications, we investigate Liouville properties of nonnegative solutions to fractional and higher order H\'{e}non-Hardy type equations
\begin{equation*}
  (-\Delta)^{\frac{\alpha}{2}}u(x)=f(x,u(x)) \,\,\,\,\,\,\,\,\,\,\,\, \text{in} \,\,\, \mathbb{R}^{n}, \,\,\, \mathbb{R}^{n}_{+} \,\,\, \text{or} \,\,\, \Omega
\end{equation*}
with $n>\alpha$, $0<\alpha<2$ or $\alpha=2m$ with $1\leq m<\frac{n}{2}$. We first consider the typical case $f(x,u)=|x|^{a}u^{p}$ with $a\in(-\alpha,\infty)$ and $0<p<p_{c}(a):=\frac{n+\alpha+2a}{n-\alpha}$. By using the \emph{method of scaling spheres}, we prove Liouville theorems for the above H\'{e}non-Hardy equations and equivalent integral equations in $\mathbb{R}^{n}$ and $\mathbb{R}^{n}_{+}$. Our results improve the known Liouville theorems for some especially admissible subranges of $a$ and $1<p<\min\left\{\frac{n+\alpha+a}{n-\alpha},p_{c}(a)\right\}$ to the full range $a\in(-\alpha,\infty)$ and $p\in(0,p_{c}(a))$. When $a>0$, we covered the gap $p\in\big[\frac{n+\alpha+a}{n-\alpha},p_{c}(a)\big)$. In particular, when $\alpha=2$, our results give an affirmative answer to the conjecture posed by Phan and Souplet \cite{PS}. As a consequence, we derive a priori estimates of (possibly sign-changing) solutions to Navier problems involving higher order uniformly elliptic operators and existence of positive solutions to Navier problems for higher order Lane-Emden equations for all $1<p<\frac{n+2m}{n-2m}$. Our theorems improve the results in \cite{CFL,DPQ} remarkably to the maximal range of $p$. For bounded domains $\Omega$, we also apply the \emph{method of scaling spheres} to derive Liouville theorems for super-critical problems. Extensions to PDEs and IEs with general nonlinearities $f(x,u)$ are also included (Theorem \ref{general}). In addition to improving most of known Liouville type results to the \emph{sharp exponents} in a \emph{unified} way, we believe the \emph{method of scaling spheres} developed here can be applied conveniently to various fractional or higher order problems \emph{with singularities} or \emph{without translation invariance} or in the cases \emph{the method of moving planes in conjunction with Kelvin transforms do not work}.
\end{abstract}
\maketitle {\small {\bf Keywords:} The method of scaling spheres, H\'{e}non-Hardy type equations, Liouville theorems, nonnegative solutions, a priori estimates, existence of solutions. \\

{\bf 2020 MSC} Primary: 35B53; Secondary: 35J30, 35J91.}

\section{Introduction}
In this paper, our main purpose is to develop the \emph{(direct) method of scaling spheres}, its \emph{integral forms} and the \emph{method of scaling spheres in local way}. As applications, we use the \emph{method of scaling spheres} to establish Liouville properties for several fractional or higher order problems \emph{with singularities} or \emph{without translation invariance} in \emph{multiple kinds of domains}.

\subsection{Liouville theorems in $\mathbb{R}^{n}$}
We first investigate the Liouville property of nonnegative solutions to the following fractional or higher order H\'{e}non-Hardy equations
\begin{equation}\label{PDE}\\\begin{cases}
(-\Delta)^{\frac{\alpha}{2}}u(x)=|x|^{a}u^{p}(x) \,\,\,\,\,\,\,\,\,\, \text{in} \,\,\, \mathbb{R}^{n}, \\
u(x)\geq0, \,\,\,\,\,\,\,\, x\in\mathbb{R}^{n},
\end{cases}\end{equation}
where $n>\alpha$, $0<\alpha<2$ or $\alpha=2m$ with $1\leq m<\frac{n}{2}$, $-\alpha<a<+\infty$ and $0<p<\frac{n+\alpha+2a}{n-\alpha}$. For extensions to fractional or higher order equations with generalized H\'{e}non-Hardy type nonlinearities $f(x,u)$, please see subsection 1.5. When $0<\alpha<2$, the nonlocal fractional Laplacians $(-\Delta)^{\frac{\alpha}{2}}$ is defined by (see e.g. \cite{CI,CLM,DQ1,Si})
\begin{equation}\label{nonlocal defn}
  (-\Delta)^{\frac{\alpha}{2}}u(x)=C_{\alpha,n} \, P.V.\int_{\mathbb{R}^n}\frac{u(x)-u(y)}{|x-y|^{n+\alpha}}dy:=C_{\alpha,n}\lim_{\epsilon\rightarrow0}\int_{|y-x|\geq\epsilon}\frac{u(x)-u(y)}{|x-y|^{n+\alpha}}dy
\end{equation}
for functions $u\in C^{1,1}_{loc}\cap\mathcal{L}_{\alpha}(\mathbb{R}^{n})$, where the constant $C_{\alpha,n}=\left(\int_{\mathbb{R}^{n}}\frac{1-\cos(2\pi\zeta_{1})}{|\zeta|^{n+\alpha}}d\zeta\right)^{-1}$ and the function spaces
\begin{equation}\label{0-1}
  \mathcal{L}_{\alpha}(\mathbb{R}^{n}):=\Big\{u: \mathbb{R}^{n}\rightarrow\mathbb{R}\,\Big|\,\int_{\mathbb{R}^{n}}\frac{|u(x)|}{1+|x|^{n+\alpha}}dx<\infty\Big\}.
\end{equation}
For $0<\alpha<2$, we assume the solution $u\in C_{loc}^{1,1}(\mathbb{R}^{n})\cap\mathcal{L}_{\alpha}(\mathbb{R}^{n})$ if $0\leq a<+\infty$, $u\in C_{loc}^{1,1}(\mathbb{R}^{n}\setminus\{0\})\cap C(\mathbb{R}^{n})\cap\mathcal{L}_{\alpha}(\mathbb{R}^{n})$ if $-\alpha<a<0$. For $\alpha=2m$ with $1\leq m<\frac{n}{2}$, we assume the solution $u\in C^{2m}(\mathbb{R}^{n})$ if $0\leq a<+\infty$, $u\in C^{2m}(\mathbb{R}^{n}\setminus\{0\})\cap C(\mathbb{R}^{n})$ if $-2m<a<0$.

For $0<\alpha\leq n$, PDEs of the form
\begin{equation}\label{GPDE}
  (-\Delta)^{\frac{\alpha}{2}}u(x)=|x|^{a}u^{p}(x)
\end{equation}
are called the fractional order or higher order H\'{e}non, Lane-Emden, Hardy equations for $a>0$, $a=0$, $a<0$, respectively. These equations have numerous important applications in conformal geometry and Sobolev inequalities. In particular, in the case $a=0$, \eqref{GPDE} becomes the well-known Lane-Emden equation, which models many phenomena in mathematical physics and in astrophysics.

We say equations \eqref{GPDE} have critical order if $\alpha=n$ and non-critical order if $0<\alpha<n$. The nonlinear terms in \eqref{GPDE} is called critical if $p=p_{c}(a):=\frac{n+\alpha+2a}{n-\alpha}$ ($:=\infty$ if $n=\alpha$) and subcritical if $0<p<p_{c}(a)$. Liouville type theorems for equations \eqref{GPDE} (i.e., nonexistence of nontrivial nonnegative solutions) in the whole space $\mathbb{R}^n$ and in the half space $\mathbb{R}^n_+$ have been extensively studied (see \cite{BG,CDQ,CFL,CFY,CL,CLiu,CLL,CLZ,Cowan,DPQ,DQ1,DQ,DZ,FW,GS,Lei,Lin,MP,P,PS,RW,SX,WX} and the references therein). For Liouville type theorems and related properties on systems of PDEs of type \eqref{GPDE} with respect to various types of solutions (e.g., stable, radial, nonnegative, sign-changing, $\cdots$), please refer to \cite{BG,DDGW,DQ,FG,LB,M,P,PQS,S,SZ} and the references therein. These Liouville theorems, in conjunction with the blowing up and re-scaling arguments, are crucial in establishing a priori estimates and hence existence of positive solutions to non-variational boundary value problems for a class of elliptic equations on bounded domains or on Riemannian manifolds with boundaries (see \cite{CDQ,CL4,GS1,PQS}).

\smallskip
{\em (i)  The cases $0<\alpha\leq2$.}
\smallskip

We first consider the case $\alpha=2$. For $a=0$, $\alpha=2$ and $1<p<p_{s}:=\frac{n+2}{n-2}$ ($:=\infty$ if $n=2$), Liouville type theorem was established by Gidas and Spruck in their celebrated article \cite{GS}. Later, the proof was remarkably simplified by Chen and Li in \cite{CL} using the Kelvin transform and the method of moving planes. The cases $a\neq0$ have not been fully understood. If $a\leq-2$, \cite{GS} also proved that that \eqref{GPDE} possesses no positive solution in any domain $\Omega$ containing the origin. For $a>-2$, in the case of radial solutions, we have known that if $1<p<p_{c}(a):=\frac{n+2+2a}{n-2}$ ($:=\infty$ if $n=2$), then \eqref{PDE} has no positive radial solution in $\mathbb{R}^{n}$; if $p\geq p_{c}(a)$, then \eqref{PDE} possesses bounded positive radial solution in $\mathbb{R}^{n}$ (see \cite{BG,GS}). It was naturally conjectured that \eqref{PDE} has no positive solution in $\mathbb{R}^{n}$ if $a>-2$ and $1<p<p_{c}(a)$. The condition $p<p_{c}(a)$ is the best possible due to the results on radial solutions in \cite{BG,GS}.
\begin{conj}\label{conjecture}(Phan and Souplet \cite{PS}, 2012)
If $n\geq2$, $\alpha=2$, $a>-2$ and $1<p<p_{c}(a)$, then \eqref{PDE} has no positive solutions in $\mathbb{R}^{n}$.
\end{conj}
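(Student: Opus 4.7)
My plan is to apply the method of scaling spheres in its integral form to the H\'{e}non-Hardy equation \eqref{PDE} in the full subcritical range $1<p<p_{c}(a)$. First I would reduce the PDE to the equivalent integral equation
\[
u(x) \;=\; c_{n}\int_{\mathbb{R}^{n}}\frac{|y|^{a}\,u(y)^{p}}{|x-y|^{n-2}}\,dy,\qquad x\in\mathbb{R}^{n},
\]
by taking spherical averages of the difference between $u$ and a truncated Newtonian potential of $|x|^{a}u^{p}$, and invoking the classical Liouville theorem for nonnegative harmonic functions on $\mathbb{R}^{n}$; the regularity class prescribed above (and continuity at the origin when $a<0$) ensures that the weight $|y|^{a}u(y)^{p}$ is locally integrable near the origin.

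Next I would carry out the scaling-spheres step. Set $x^{\lambda}:=\lambda^{2}x/|x|^{2}$ and $u_{\lambda}(x):=(\lambda/|x|)^{n-2}\,u(x^{\lambda})$. A direct Kelvin change of variables inside the integral equation shows that $u_{\lambda}$ satisfies the same integral equation but with weight $(\lambda/|y|)^{\sigma}|y|^{a}$ in place of $|y|^{a}$, where $\sigma:=(n+2+2a)-p(n-2)>0$ precisely because $p<p_{c}(a)$; hence for $|y|<\lambda$ the factor $(\lambda/|y|)^{\sigma}>1$, so $u_{\lambda}$ solves an integral equation with a strictly heavier nonlinearity than $u$. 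Subtracting the two integral equations produces a closed identity for $w_{\lambda}:=u_{\lambda}-u$ on $B_{\lambda}(0)\setminus\{0\}$. I would verify that $w_{\lambda}\geq 0$ on $B_{\lambda}\setminus\{0\}$ for all sufficiently small $\lambda>0$ by a direct asymptotic analysis at the origin, then define $\lambda_{\ast}:=\sup\{\lambda>0:w_{\mu}\geq 0\text{ on }B_{\mu}\setminus\{0\}\text{ for all }\mu\in(0,\lambda]\}$ and show $\lambda_{\ast}=+\infty$ by contradiction, using the closed identity together with the strict positivity of $(\lambda_{\ast}/|y|)^{\sigma}-1$ on $B_{\lambda_{\ast}}$.

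Once $u_{\lambda}\geq u$ on $B_{\lambda}\setminus\{0\}$ is established for every $\lambda>0$, one extracts an asymptotic lower bound of the form $u(x)\geq C|x|^{-(n-2)}$ as $|x|\to+\infty$. Plugging this back into the integral equation improves the exponent to $u(x)\geq C_{1}|x|^{-\mu_{1}}$ with $\mu_{1}<n-2$; iterating this bootstrap in the style of \cite{CFL} yields a strictly decreasing sequence $\mu_{k}$, and the subcritical condition $p<p_{c}(a)$ forces $\mu_{k}$ below the threshold needed for $\int_{\mathbb{R}^{n}}|y|^{a}u(y)^{p}\,dy$ to be finite in finitely many steps, contradicting the finiteness of $u$ at any fixed point. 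The main obstacle I anticipate lies in the previous stage, namely in pushing $\lambda_{\ast}$ all the way to $+\infty$: without translation invariance (owing to the weight $|x|^{a}$) and, when $a<0$, in the presence of a singular weight, the moving-planes-plus-Kelvin route fails in the range $p\in[(n+a+2)/(n-2),\,p_{c}(a))$ because the Kelvin-transformed PDE develops an uncontrollable sign change. The scaling-spheres identity bypasses this by operating in a single global coordinate system where the only monotonicity input needed is the sign of $\sigma$, but making the continuity/maximum-principle argument at $\lambda_{\ast}$ rigorous---and simultaneously controlling behavior at both $0$ and $\infty$---is the technical heart of the proof.
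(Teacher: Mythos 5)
Your plan is correct in outline and lands on the same engine the paper uses --- the method of scaling spheres plus a bootstrap to a contradiction with the integrability of the Riesz potential of $|y|^au^p$. The route differs in one respect: you propose to run the scaling-spheres argument entirely on the integral equation (what the paper calls the \emph{method of scaling spheres in integral forms}, carried out in Section~3 for $\alpha=2m$, $m\geq1$), whereas the paper's stated proof of Conjecture~\ref{conjecture} for $n\geq3$ proceeds via Theorem~\ref{Thm0} and the \emph{direct} method of Section~2, which works at the PDE level and requires the Narrow Region Principle (Theorem~\ref{NRP1}) proved by PDE maximum-principle estimates. Since $m=1$ is allowed in Theorems~\ref{equivalence1} and \ref{Thm1}, your IE-based route is indeed available for $\alpha=2$, $n\geq3$, $1<p<p_c(a)$, and its payoff is precisely what you identify: one avoids the Narrow Region Principle and instead closes the ``start of the dilation'' step with the Hardy--Littlewood--Sobolev inequality, which absorbs the singular weight $|x|^a$ for $a>-2$ without any PDE-level $C^{1,1}$ barrier construction.

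Two places where the sketch is thinner than a working proof. First, ``verify $w_\lambda\geq0$ on $B_\lambda\setminus\{0\}$ for all small $\lambda$ by a direct asymptotic analysis at the origin'' is not enough on its own: the blow-up of $u_\lambda$ near $0$ coming from $u(0)>0$ only controls a fixed concentric sub-ball, and the complement annulus near $S_\lambda$ needs either the Narrow Region Principle (direct method) or, in your integral setting, the HLS estimate
$\|\omega^\lambda\|_{L^q(B^-_\lambda)}\leq C\||x|^a\max\{u^{p-1},u_\lambda^{p-1}\}\|_{L^{n/2}(B^-_\lambda)}\|\omega^\lambda\|_{L^q(B^-_\lambda)}$,
together with the smallness of the middle factor for small $\lambda$ --- the origin-asymptotics are an input to that smallness, not a replacement for it. The same HLS mechanism is what makes the $\lambda_\ast<\infty$ contradiction work after you first show $\omega^{\lambda_\ast}\equiv0$; be aware that your ``closed identity'' step has two layers (show $\omega^{\lambda_\ast}\equiv0$ by re-running the HLS argument on a narrow region, then contradict via $(\lambda_\ast/|y|)^\sigma-1>0$). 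Second, the conjecture as stated includes $n=2$, where $n=\alpha$ and the Riesz kernel $|x-y|^{\alpha-n}$ degenerates, so the integral-equation formulation is unavailable; the paper covers $n=2$ by citing Bidaut-V\'{e}ron--Giacomini, Bidaut-V\'{e}ron--Pohozaev and Souto, and your proposal should likewise flag that case as needing a separate argument.
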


For $a>-2$, the nonexistence of positive entire solution was first established by Bidaut-V\'{e}ron and Giacomini \cite{BG} and Bidaut-V\'{e}ron and Pohozaev \cite{BP} for $0<p<\min\big(p_{s},p_{c}(a)\big)$ and Mitidieri and Pohozaev \cite{MP} for $1<p<\frac{n+a}{n-2}$. Subsequently, Phan and Souplet \cite{PS} proved the conjecture for dimension $n=3$ in the class of \emph{bounded} solutions, using the integral estimates and feedback estimates arguments based on Pohozaev identities (\emph{special spatial dimension $n$} and \emph{global boundedness assumptions on $u$} are necessary if $a\neq0$) which were introduced by Serrin and Zou \cite{SZ} and further developed by Souplet \cite{S}. Dai and Qin \cite{DQ} derived Liouville theorem for \eqref{GPDE} for all $a\in\mathbb{R}$ and $0<p\leq1$. Cheng and Liu \cite{CLiu} proved the nonexistence of positive solutions for $a>0$ and $1<p<\frac{n+2+a}{n-2}$ using the method of moving planes in integral forms. One should observe that, if $a>0$ and $n\geq3$, there is still a gap between $\frac{n+2+a}{n-2}$ and the critical exponent $p_{c}(a)$. It is well known that this gap $\big[\frac{n+2+a}{n-2},p_{c}(a)\big)$ can not be covered by the method of moving planes in conjunction with Kelvin transforms, since Kelvin transforms will yield a equation with non-decreasing power $|x|^{\mu}$ ($\mu\geq0$) in the nonlinearity.

As to the cases $0<\alpha<2$, \eqref{PDE} is a fractional equation of nonlocal nature. For $a=0$ and $1<p<\frac{n+\alpha}{n-\alpha}$, Liouville theorem for fractional order Lane-Emden equation \eqref{PDE} was established by Chen, Li and Li \cite{CLL} using a direct method of moving planes (see also \cite{CLZ,DFQ}). There are only a few known results for $a\neq0$. For $a>0$ and $\frac{n+a}{n-\alpha}<p<\frac{n+\alpha+a}{n-\alpha}$, Dou and Zhou \cite{DZ} proved Liouville theorem for fractional order H\'{e}non equation \eqref{PDE} using the method of moving planes in integral forms.

\medskip

By developing and applying the \emph{(direct) method of scaling spheres}, we will first establish Liouville theorem for nonnegative solutions of \eqref{PDE} in cases $0<\alpha\leq2$, $-\alpha<a<+\infty$ and $0<p<p_{c}(a):=\frac{n+\alpha+2a}{n-\alpha}$. One should note that, our results extend the range $p<\frac{n+\alpha+a}{n-\alpha}$ in the literatures mentioned above to the full range $p\in(0,p_{c}(a))$.

\medskip

\emph{The method of scaling spheres} is essentially a frozen variant of the method of moving spheres, that is, we only \emph{dilate or shrink} the spheres with respect to \emph{one fixed} center. The method of moving spheres was initially used by Padilla \cite{Pa} and Chen and Li \cite{CL1}, then developed by Li and Zhu \cite{LZ} (classification results and key calculus lemmas were established therein), which means moving spheres centered at \emph{every points} in $\mathbb{R}^{n}$ or $\partial\mathbb{R}^{n}_{+}$ \emph{simultaneously} (either \emph{all stop at finite radius} or \emph{all stop at $\infty$}) in conjunction with \emph{calculus lemmas} and \emph{ODE analysis}. Later, it was further developed by Li \cite{Li}, Chen and Li \cite{CL0}, Li and Zhang \cite{LZ1} and Jin, Li and Xu \cite{JLX}. Recently, Chen, Li and Zhang developed a direct method of moving spheres on fractional order equations in \cite{CLZ}. One should note that, being different from the method of moving spheres, \emph{the method of scaling spheres} takes full advantage of the \emph{integral representation formulae} of solutions and the \emph{``Bootstrap" technique} to derive \emph{lower bound estimates on the asymptotic behaviour} of positive solutions w.r.t. \emph{singularities} or \emph{$\infty$}, which can finally lead to a \emph{Liouville type result}. Besides improving most of known Liouville type results to the \emph{best exponents} in a \emph{unified and simple} way, we believe the \emph{method of scaling spheres} can be applied conveniently to various problems \emph{with singularities} or \emph{without translation invariance} in general domains.

\medskip

\emph{The method of scaling spheres} developed in this paper consists of \emph{three} main steps. First, the integral representation formulae of solutions imply positive solution $u$ satisfies the same lower bound as fundamental solutions as $|x|\rightarrow\infty$. Second, by combining this lower bound with the integral representation formulae and the scaling spheres procedure, we will get a better lower bound for positive solution than fundamental solutions. Third, based on the second step, a ``Bootstrap" iteration process in conjunction with the integral representation formulae will provide better and better lower bound estimates on the asymptotic behaviour of positive solution $u$ (see Theorem \ref{lower0}, \ref{lower1}, \ref{lower2} and \ref{lower3}), which finally lead to a contradiction with the integrability of solutions indicated by the integral representation formulae unless the solution $u\equiv0$ (see Sections 2-5).

\medskip

Our Liouville type result for \eqref{PDE} with $0<\alpha\leq2$ is the following theorem.
\begin{thm}\label{Thm0}
Assume $n>\alpha$, $0<\alpha\leq2$, $-\alpha<a<+\infty$, $0<p<\frac{n+\alpha+2a}{n-\alpha}$. Suppose $u$ is a nonnegative solution of \eqref{PDE}, then $u\equiv0$ in $\mathbb{R}^{n}$.
\end{thm}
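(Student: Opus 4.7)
\medskip

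\noindent\textbf{Proof plan.} The plan is to pass from the PDE \eqref{PDE} to the equivalent integral representation
\[
  u(x)=\int_{\mathbb{R}^{n}} \frac{R_{\alpha,n}}{|x-y|^{n-\alpha}}\,|y|^{a}u(y)^{p}\,dy,
\]
where $R_{\alpha,n}$ is the Riesz constant (iterating the representation $m$ times in the polyharmonic case $\alpha=2m$). This reduction uses a Riesz--potential--plus--harmonic--remainder decomposition: one verifies that $v(x):=R_{\alpha,n}\int|x-y|^{\alpha-n}|y|^{a}u^{p}\,dy$ is well defined, has the same $(-\Delta)^{\alpha/2}$ as $u$, and that $u-v$ is a nonnegative entire $\alpha/2$--harmonic (or polyharmonic) function, which by Liouville is a polynomial of degree $<\alpha$; compatibility with $u\ge0$ and the decay of $v$ forces this polynomial to vanish. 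Once the integral equation is in hand, I would run the usual dichotomy: either $u\equiv 0$ (done) or $u>0$ in $\mathbb R^{n}$ (resp.\ $\mathbb R^{n}\setminus\{0\}$ when $a<0$), by strict positivity of Riesz potentials of nontrivial nonnegative densities. From the latter I would derive the baseline lower bound $u(x)\ge c|x|^{\alpha-n}$ for $|x|\ge R_{0}$ by restricting the integral to a small ball where $|y|^{a}u^{p}$ is bounded below.

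The heart of the argument is the \emph{scaling spheres} step, replacing Kelvin + moving planes. For $\lambda>0$, set $x^{\lambda}:=\lambda^{2}x/|x|^{2}$ and $u_{\lambda}(x):=(\lambda/|x|)^{n-\alpha}u(x^{\lambda})$. Subtracting the integral equation from its Kelvin--transformed version gives, on $B_{\lambda}^{c}$, an identity of the form
\[
  u(x)-u_{\lambda}(x)=\int_{B_{\lambda}^{c}}\!\bigl[K(x,y)-K_{\lambda}(x,y)\bigr]\,\bigl(|y|^{a}u(y)^{p}-|y^{\lambda}|^{a}(\lambda/|y|)^{\tau}u_{\lambda}(y)^{p}\bigr)\,dy,
\]
with $K(x,y)=R_{\alpha,n}|x-y|^{\alpha-n}$, $K(x,y)-K_{\lambda}(x,y)>0$ on $B_{\lambda}^{c}\times B_{\lambda}^{c}$, and the weight exponent $\tau$ chosen so that the subcritical condition $p<p_{c}(a)$ yields the sign-compatibility that starts the scaling. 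The ``start'' step proceeds by contradiction: on the set where $u_{\lambda}>u$, the integrand is nonpositive, and combined with the $H$\"older / Hardy--Littlewood--Sobolev estimate one forces this set to be empty for every small $\lambda$; hence no scaling happens and $u(x)\ge u_{\lambda}(x)$ for \emph{every} $\lambda>0$ and every $x\in B_{\lambda}^{c}$. Evaluating this uniform comparison along $\lambda\to\infty$ (with $x$ close to origin) upgrades the decay bound to
\[
  u(x)\ge C|x|^{\mu_{0}}\quad\text{for all }|x|\ge R_{1},
\]
with $\mu_{0}>\alpha-n$, the exact value being determined by the scaling balance $\mu_{0}=-(n-\alpha)+(n-\alpha)$ adjusted by the $|y|^{a}u^{p}$ budget; crucially, $\mu_{0}$ can be pushed strictly past the Liouville gap under the hypothesis $p<p_{c}(a)$.

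With an improved $\mu_{0}$ in hand, I would then bootstrap: inserting $u(y)\ge C|y|^{\mu_{k}}$ into the integral equation and estimating the contribution of the dyadic annulus $|y|\sim|x|$ gives
\[
  u(x)\ge C'|x|^{p\mu_{k}+a+\alpha}\equiv C'|x|^{\mu_{k+1}},
\]
and the affine iteration $\mu_{k+1}=p\mu_{k}+a+\alpha$ has fixed point $\mu^{\ast}=-(a+\alpha)/(p-1)$. The condition $p<p_{c}(a)$ (together with the scaling--spheres--improved starting value $\mu_{0}>\mu^{\ast}$) drives $\mu_{k}\to+\infty$. This eventually contradicts the finiteness at any single point of $\int_{\mathbb{R}^{n}}|y|^{a}u(y)^{p}|x-y|^{\alpha-n}\,dy$, since for large enough $k$ the integrand ceases to be integrable at infinity. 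The case $0<p\le 1$ is already covered in \cite{DQ}, so I would only need the bootstrap for $p>1$; in the range $p\le \frac{n+\alpha+a}{n-\alpha}$ the bootstrap alone suffices (classical moving planes work), but in the new gap $p\in[\frac{n+\alpha+a}{n-\alpha},p_{c}(a))$ one genuinely needs the scaling--spheres upgrade of $\mu_{0}$.

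The main obstacle is carrying out the sign analysis in the scaling--spheres step, i.e.\ proving the uniform comparison $u\ge u_{\lambda}$ on $B_{\lambda}^{c}$ for \emph{every} $\lambda>0$. Getting the kernel inequality $K-K_{\lambda}>0$ is elementary, but proving that the set $\{u_{\lambda}>u\}\cap B_{\lambda}^{c}$ is empty requires handling the singular weight $|y|^{a}$ (including the negative range $-\alpha<a<0$), carefully tracking how the pull-back weight $|y^{\lambda}|^{a}(\lambda/|y|)^{\tau}$ compares with $|y|^{a}$, and ensuring the HLS/Hardy--Sobolev estimate one uses to close the argument is valid precisely under the sharp subcritical condition $p<p_{c}(a)$. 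The second subtlety is passing from the \emph{comparison} $u\ge u_{\lambda}$ to a \emph{quantitative} improved decay rate $\mu_{0}>\alpha-n$ that is strong enough to feed the bootstrap, which is where the numerology of $p_{c}(a)$ enters decisively.
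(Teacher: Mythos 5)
Your overall strategy coincides with the paper's: reduce \eqref{PDE} to the Riesz--potential integral equation via a decomposition ``Riesz potential plus nonnegative $\alpha/2$--harmonic remainder killed by Liouville'', prove the fundamental--solution lower bound $u\gtrsim|x|^{\alpha-n}$, run a scaling--spheres comparison with the origin as center, and then bootstrap the lower bound to contradict the finiteness of $\int_{\mathbb R^n}|y|^{a-n+\alpha}u^p(y)\,dy=Cu(0)$. You also correctly identify $\tau:=n+\alpha+2a-p(n-\alpha)>0$ as the place where $p<p_c(a)$ enters the sign analysis, and the affine iteration $\mu_{k+1}=p\mu_k+(a+\alpha)$ as the bootstrap mechanism, with the scaling--spheres upgrade of the baseline exponent being what pushes the range all the way up to $p_c(a)$.

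Your route does differ in one implementation choice that is legitimate: for $0<\alpha\le2$ the paper's Section~2 uses the \emph{direct} method of scaling spheres, closing the small--$\lambda$ step with a Narrow Region Principle applied to the differential inequality $(-\Delta)^{\alpha/2}\omega^\lambda\ge c_\lambda\omega^\lambda$, whereas you propose to close it with a Hardy--Littlewood--Sobolev estimate on the integral equation. That is the ``integral forms'' variant the paper actually uses in Section~3 for $\alpha=2m$, and it is indeed an acceptable alternative in this range (the paper remarks that both variants are available).

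There is, however, a genuine gap. You write that the HLS argument shows $\{u_\lambda>u\}\cap B_\lambda^c$ is empty ``for every small $\lambda$'' and then immediately conclude --- ``hence no scaling happens and $u\ge u_\lambda$ for every $\lambda>0$.'' That ``hence'' hides the entire continuation step, which is the heart of the scaling--spheres method. The HLS bound closes only because the $L^{n/\alpha}$ norm of $|x|^a\max\{u^{p-1},u_\lambda^{p-1}\}$ over the bad set $B_\lambda^-$ is small, which holds automatically when the whole ball $B_\lambda$ is small; for general $\lambda$ this norm need not be below the threshold. To go from small $\lambda$ to all $\lambda$ one must define $\lambda_0=\sup\{\lambda>0:\omega^\mu\ge0\text{ on }B_\mu\text{ for all }\mu\le\lambda\}$ and rule out $\lambda_0<\infty$ by a two--pronged argument: (i) if $\omega^{\lambda_0}\not\equiv0$, a strict positivity estimate (obtained from the positivity of the kernel difference in the integral identity) forces $\omega^\lambda>0$ on a slightly shrunken ball for $\lambda$ a bit larger than $\lambda_0$, confining $B_\lambda^-$ to a narrow annulus where the HLS/narrow--region estimate applies again; (ii) if $\omega^{\lambda_0}\equiv0$, the identity $0=\omega^{\lambda_0}(x)=C\int_{B_{\lambda_0}}(\cdots)\big((\lambda_0/|y|)^\tau-1\big)u^p(y)\,dy$ with $\tau>0$ is immediately contradictory. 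Step (ii) is precisely where $p<p_c(a)$ is used to push the sphere to infinity, and neither (i) nor (ii) follows from the small--$\lambda$ argument. Without this, you have only the baseline $u\gtrsim|x|^{\alpha-n}$, and the bootstrap then reaches only $p<(n+a)/(n-\alpha)$, not the full subcritical range.

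Two smaller inaccuracies: the improved exponent $\mu_0=-(n-\alpha)/2$ (in the sign convention $u\gtrsim|x|^{\mu_0}$) comes from choosing $\lambda=\sqrt{|x|}$ with $|x|$ \emph{large}, not with ``$x$ close to the origin''; and the claim $\mu_k\to+\infty$ is correct only for $p>1$ --- for $0<p<1$ the iteration converges to the fixed point $-(a+\alpha)/(p-1)$, which still contradicts the integrability, but your write--up should not assert divergence there (you do defer that range to \cite{DQ}, which is fine as an escape).
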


Combining Theorem \ref{Thm0} for $\alpha=2$ and $a>0$ with the results in Bidaut-V\'{e}ron and Giacomini \cite{BG}, Bidaut-V\'{e}ron and Pohozaev \cite{BP}, and Souto \cite{Souto} for $n=2$, we conclude immediately the following corollary.
\begin{cor}\label{corollary}
Conjecture \ref{conjecture} is true for all $0<p<p_{c}(a)$.
\end{cor}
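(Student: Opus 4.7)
The plan is to combine Theorem \ref{Thm0} with the pre-existing Liouville literature, since the corollary is a direct assembly rather than a new analytic argument.

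First, I would dispose of dimensions $n\geq 3$. In this range one has $n>\alpha=2$, so Theorem \ref{Thm0} applies verbatim: for every $a\in(-2,\infty)$ and every $0<p<p_{c}(a)=\frac{n+2+2a}{n-2}$, the only nonnegative solution of \eqref{PDE} is $u\equiv 0$. This settles the nonexistence claim of Conjecture \ref{conjecture} in its original formulation $1<p<p_{c}(a)$, and in fact strengthens it to the full subcritical range $0<p<p_{c}(a)$.

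Second, I would treat the remaining case $n=2$. Here the hypothesis $n>\alpha$ of Theorem \ref{Thm0} fails, so it cannot be invoked. However, when $n=2$ one has $p_{c}(a)=+\infty$ for every $a>-2$, so what must be shown is that \eqref{PDE} admits no positive solution for any $0<p<\infty$. This is already in the literature: the Pohozaev-identity arguments of Bidaut-V\'{e}ron--Giacomini \cite{BG} and Bidaut-V\'{e}ron--Pohozaev \cite{BP} cover the principal ranges of $p$, while Souto \cite{Souto} handles the remaining cases. Assembling these results gives nonexistence in all of $0<p<\infty$ when $n=2$.

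Combining the two steps yields Conjecture \ref{conjecture} for all $0<p<p_{c}(a)$. I do not expect any obstacle inside the corollary itself; the substantive work lies entirely in Theorem \ref{Thm0}, in particular in closing the gap $\bigl[\frac{n+2+a}{n-2},p_{c}(a)\bigr)$ for $a>0$ in dimensions $n\geq 3$, which is precisely where the method of scaling spheres does its job. The corollary is therefore a bookkeeping step that merely records the consequence of that work together with the known $n=2$ results.
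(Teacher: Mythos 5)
Your decomposition by dimension ($n\geq3$ via Theorem \ref{Thm0} with $\alpha=2$, which covers the full range $a>-2$ and $0<p<p_c(a)$, together with the cited results of Bidaut-V\'{e}ron--Giacomini, Bidaut-V\'{e}ron--Pohozaev, and Souto for $n=2$, where $p_c(a)=\infty$) is exactly the paper's one-line justification preceding the corollary. There is no gap; your writeup is simply more explicit about why the hypothesis $n>\alpha$ in Theorem \ref{Thm0} forces the separate treatment of $n=2$.
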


\begin{rem}
Until two months after this work has been completed and submitted, we were aware that Conjecture \ref{conjecture} has been proved before in Guo and Wan \cite{GW} and Reichel and Zou \cite{RZ} for $n\geq3$ and $1<p<p_{c}(a)$. Our method of scaling spheres gives another unified approach to Conjecture \ref{conjecture} for both $0<p\leq1$ and $1<p<p_{c}(a)$, in addition, it can also be applied to general fractional order and higher order problems.
\end{rem}

\smallskip
{\em (ii)  The higher order cases $\alpha=2m$ with $2\leq m<\frac{n}{2}$.}
\smallskip

The case $a\leq0$ has been widely studied by many authors. For $a=0$, $\alpha=4<n$ and $1<p<\frac{n+4}{n-4}$, Lin established the Liouville type theorem for all the nonnegative $C^{4}(\mathbb{R}^{n})$ smooth solutions of \eqref{PDE} in \cite{Lin}. When $a=0$, $\alpha=2m<n$ and $1<p<\frac{n+2m}{n-2m}$, Wei and Xu \cite{WX} proved Liouville type theorem for all the nonnegative $C^{\alpha}(\mathbb{R}^{n})$ smooth solutions of \eqref{PDE}. For $\alpha=2m<n$, $-2m<a\leq0$, $1<p<\frac{n+2m+2a}{n-2m}$ if $-2<a\leq0$, $1<p<+\infty$ if $-2m<a\leq-2$, Dai, Peng and Qin \cite{DPQ} derived Liouville type theorem for all the nonnegative $C^{2m}(\mathbb{R}^{n}\setminus\{0\})\cap C^{2m-2}(\mathbb{R}^{n})$ solutions of \eqref{PDE} under assumptions that either $-2p-2\leq a\leq0$ or $u(x)=o(|x|^{2})$ at $\infty$ holds. Under the same assumptions, they also obtained Liouville theorem for all the nonnegative $C^{2m}(\mathbb{R}^{n}\setminus\{0\})\cap C(\mathbb{R}^{n})$ solutions in the range $1<p<\frac{n+2m+2a}{n-2m}$ and $-2m<a\leq0$. Similar results for critical order H\'{e}non-Hardy equations have been established by Chen, Dai and Qin in \cite{CDQ}.

The cases $a>0$ have not been fully understood. For $\alpha=4$, $a>0$ and $n=5$, Cowan \cite{Cowan} has proved that there are no positive \emph{bounded} classical solutions to \eqref{PDE} provided $1<p<\frac{n+4+2a}{n-4}$ using the integral estimates and feedback estimates arguments based on Pohozaev identities (global boundedness assumptions on $u$ are necessary if $a\neq0$) which were introduced by Serrin and Zou \cite{SZ} and further developed by Souplet \cite{S}. Under some assumptions, Fazly \cite{F} and Phan \cite{P} derived nonexistence of \emph{bounded} positive solutions for general poly-harmonic H\'{e}non equations of type \eqref{PDE} with $a\geq0$ and $p>1$, they also proved Liouville type theorems for systems of poly-harmonic H\'{e}non equations. For $\alpha=2m<n$, Cheng and Liu \cite{CLiu} proved Liouville type theorem for \eqref{PDE} in the cases $a>0$ and $\frac{n}{n-2m}<p<\frac{n+2m+a}{n-2m}$. One should observe that, there is still a gap between $\frac{n+2m+a}{n-2m}$ and the critical exponent $p_{c}(a)$ since $a>0$, which can not be covered by the method of moving planes in conjunction with Kelvin transforms.

By applying the \emph{method of scaling spheres in integral forms}, we will establish Liouville theorem for nonnegative solutions of \eqref{PDE} in cases $\alpha=2m$ with $2\leq m<\frac{n}{2}$, $0\leq a<+\infty$ and $1<p<\frac{n+2m+2a}{n-2m}$. One should note that our results extend the range $\frac{n}{n-2m}<p<\frac{n+2m+a}{n-2m}$ in \cite{CLiu} to the full range $p\in(1,p_{c}(a))$.

First, by Theorem 2 in \cite{CLiu} and Theorem 2.1 in \cite{DPQ}, we have the super poly-harmonic properties of nonnegative solutions $u$, that is, $(-\Delta)^{i}u\geq0$ for every $i=1,2,\cdots,m-1$. Furthermore, from Theorem 1 in \cite{CLiu} and Theorem 2.3 in \cite{DPQ}, we have also known the equivalence between PDEs \eqref{PDE} and the following integral equations
\begin{equation}\label{IE1}
  u(x)=\int_{\mathbb{R}^{n}}\frac{C_{n,m}}{|x-y|^{n-2m}}|y|^{a}u^{p}(y)dy.
\end{equation}
That is, we have the following theorem from \cite{CLiu,DPQ}.
\begin{thm}\label{equivalence1}
Assume $\alpha=2m$ with $1\leq m<\frac{n}{2}$, $0\leq a<+\infty$, $1<p<+\infty$. If u is a nonnegative solution of PDEs \eqref{PDE}, then $u$ is also a nonnegative solution of integral equations \eqref{IE1}, and vice versa.
\end{thm}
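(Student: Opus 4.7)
The plan is to prove both directions, with the main work being the forward direction (PDE $\Rightarrow$ IE), since the reverse direction follows essentially by applying $(-\Delta)^m$ under the integral sign and recognizing the kernel as the fundamental solution of the polyharmonic operator on $\mathbb{R}^n$. The forward direction splits naturally into two parts: (a) a super poly-harmonic property, namely $(-\Delta)^i u \geq 0$ in $\mathbb{R}^n$ for $i = 1, \ldots, m-1$; and (b) a representation step that uses (a) to identify $u$ with the Riesz potential of $|y|^a u^p$.

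For step (a), I would argue by contradiction. Set $w_i(x) := (-\Delta)^i u(x)$ and $\bar{w}_i(r) := \frac{1}{|\partial B_r|}\int_{\partial B_r} w_i \, dS$. A direct computation using the divergence theorem and the fact that $\bar{w}_{i-1}$ satisfies the radial Laplace equation $-(\bar{w}_{i-1})''(r) - \frac{n-1}{r}(\bar{w}_{i-1})'(r) = \bar{w}_i(r)$ yields a coupled system of ODE inequalities for the spherical averages. Starting from $-\Delta \bar{w}_{m-1}(r) = \overline{|x|^a u^p}(r) \geq 0$ and suitable integration, one concludes that if some $\bar{w}_{i_0}(r_0) < 0$, then $\bar{w}_{i_0-1}(r)$ eventually decreases to $-\infty$, contradicting $u = \bar{w}_0 \geq 0$. (This is the essence of the Chen–Li–Ou type argument for super poly-harmonic properties.) The nontrivial point is handling the singularity at the origin when $a<0$; here I would work on $\mathbb{R}^n \setminus B_\varepsilon$ and pass $\varepsilon \to 0$ using the assumed regularity of $u$ near $0$ together with the integrability weight $|x|^a$ with $a > -2m$.

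For step (b), with $(-\Delta)^i u \geq 0$ established, I proceed inductively from the innermost layer. Fix $x$ and apply Green's representation for $-\Delta$ on the ball $B_R(0)$ to $w_{m-1}$: this expresses $w_{m-1}(x)$ as a Newtonian potential of $|y|^a u^p$ over $B_R$ plus a harmonic boundary term. Nonnegativity of $w_{m-1}$ forces the boundary term to admit a nonnegative limit, and by evaluating the spherical average and letting $R \to \infty$ together with a standard argument (nonnegative harmonic minorant plus Liouville), one shows the boundary term tends to a nonnegative constant, and the finiteness at any single point $x$ forces that constant to be $0$. Iterating this argument up the tower $w_{m-1} \to w_{m-2} \to \cdots \to w_0 = u$, each layer yields the Newton potential of the next, and composing the $m$ convolutions with the Green kernel $\tfrac{c_n}{|x-y|^{n-2}}$ collapses by the semigroup identity for Riesz potentials into a single convolution against $\tfrac{C_{n,m}}{|x-y|^{n-2m}}$, giving \eqref{IE1}.

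The main obstacle is not the iteration but rather justifying the vanishing of the harmonic boundary terms at each layer, i.e.\ ruling out an entire positive harmonic function as a ``hidden addendum'' in the representation. This requires showing that the growth of $w_i$ at infinity is controlled strictly less than quadratically; the super poly-harmonic property plus the fact that the Riesz potential of $|y|^a u^p$ is itself well-defined (which in turn uses $p>1$ and $a>-2m$ to control the integrand near $0$ and at $\infty$) are precisely what force the harmonic remainders to be constants, and then nonnegative constants with finite Newton potential, hence zero. The converse direction requires only that the integral in \eqref{IE1} is finite and that one may differentiate under the integral up to order $2m$, which is standard for the Riesz kernel once the integrability of $|y|^a u^p$ is known.
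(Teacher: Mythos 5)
The paper does not actually prove Theorem~\ref{equivalence1}; it quotes it directly from Cheng--Liu~\cite{CLiu} (for $a\geq 0$) and Dai--Peng--Qin~\cite{DPQ} (for $a\leq 0$), where the super poly-harmonic property and the equivalence with the Riesz-potential representation are established. Your outline --- (a) super poly-harmonic property via spherical-average ODE inequalities in the spirit of Chen--Li--Ou, then (b) iterated Green's representation on $B_R(0)$, a Liouville theorem for the nonnegative harmonic remainders, and forcing the additive constants to vanish --- is precisely the strategy those references use, so in that sense you have reproduced the referenced proof rather than found a new route.

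Two places where you are imprecise and would need to tighten the argument. First, in step (a) you phrase the contradiction as starting from ``some $\bar w_{i_0}(r_0)<0$'', but the averages $\bar w_i$ are taken about the origin; the conclusion $w_i\geq 0$ is pointwise, and the standard argument re-centers the spherical averages at an arbitrary $x^0$ with $w_{i_0}(x^0)<0$. After re-centering the source $|y|^a u^p$ is still nonnegative, but now the weight has a singularity on spheres of radius $|x^0|$ when $a<0$; this is integrable because $a>-2m>-(n-1)$, yet it does require a short justification that the averages and their derivatives are well-defined. You instead propose working on $\mathbb{R}^n\setminus B_\eps$ and sending $\eps\to 0$, which is not the standard route and leaves the crucial re-centering unmentioned. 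Second, in step (b) the sentence ``finiteness at any single point $x$ forces that constant to be $0$'' is not right as stated at the $w_{m-1}$ level: the constant $C_{m-1}$ in $w_{m-1}=\text{(Riesz potential)}+C_{m-1}$ is forced to vanish because a positive constant would make the Newton potential in the \emph{next} layer $w_{m-2}$ diverge, not because $w_{m-1}$ itself would be infinite. At the very bottom ($w_0=u$), one needs the separate observation that $C_0>0$ would make $\int |y|^a u^p|x-y|^{2m-n}\,dy\geq C_0^p\int|y|^a|x-y|^{2m-n}\,dy=+\infty$ (which uses $a>-2m$), contradicting $u(x)<\infty$; this is exactly the argument the paper gives in the proof of Theorem~\ref{equivalent} in the case $0<\alpha\leq 2$. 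You should keep these two steps distinct; your outline conflates them. Also note that $p>1$ is genuinely needed in step (a), not merely for integrability near $0$ and $\infty$: the bootstrap that turns a large $\bar u$ into a contradiction via the feedback $-\Delta\bar w_{m-1}=\overline{|x|^a u^p}$ relies on superlinearity.
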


Next, we consider the integral equations \eqref{IE1} instead of PDEs \eqref{PDE}. By applying \emph{the method of scaling spheres in integral forms} (see Section 3), we can prove the following Liouville type result for IEs \eqref{IE1}.
\begin{thm}\label{Thm1}
Assume $\alpha=2m$ with $1\leq m<\frac{n}{2}$, $-2m<a<+\infty$, $0<p<\frac{n+2m+2a}{n-2m}$. If $u\in C(\mathbb{R}^{n})$ is a nonnegative solution to IEs \eqref{IE1}, then $u\equiv 0$ in $\mathbb{R}^{n}$.
\end{thm}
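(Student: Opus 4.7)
The plan is to argue by contradiction along the three-step method of scaling spheres (in integral forms) sketched in the introduction, working directly with the integral equation \eqref{IE1}. Suppose $u\not\equiv 0$. Positivity of the Riesz kernel in \eqref{IE1} together with continuity of $u$ gives $u>0$ on $\mathbb{R}^n$, and restricting the integral to a small ball about the origin yields the fundamental-solution lower bound
\[
u(x)\geq c_0\,|x|^{-(n-2m)}\qquad\text{for }|x|\geq 2.
\]

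The scaling-spheres step is the technical heart of the argument. For each $\lambda>0$ introduce the Kelvin transform
\[
u_\lambda(x)=\left(\frac{\lambda}{|x|}\right)^{n-2m}u\!\left(\frac{\lambda^2 x}{|x|^2}\right),
\]
and, via the change of variables $y\mapsto \lambda^2 y/|y|^2$ inside \eqref{IE1}, rewrite $u(x)-u_\lambda(x)$ on $B_\lambda^c$ as an integral on $B_\lambda$ of $u^p(y)-u_\lambda^p(y)$ against the difference of suitable Green-type kernels. The Hardy-type weight $|y|^a$ together with the subcriticality $p<p_c(a)$ produce an extra factor $(\lambda/|x|)^{(p_c(a)-p)(n-2m)}<1$ on $B_\lambda^c$, which is the source of the one-sided comparison. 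For $\lambda$ small, a Hardy--Littlewood--Sobolev type estimate applied to the integral identity for $(u-u_\lambda)^-$ yields $u\geq u_\lambda$ on $B_\lambda^c$. Setting $\lambda_0:=\sup\{\lambda>0:u\geq u_\mu\text{ on }B_\mu^c\ \forall\,0<\mu\leq\lambda\}$, a strict-inequality/continuity argument in $\lambda$ that exploits the subcritical factor rules out $\lambda_0<\infty$. Hence $u\geq u_\lambda$ on $B_\lambda^c$ for every $\lambda>0$; substituting $\lambda=\sqrt{s|x|}$ in this inequality shows that $r\mapsto r^{(n-2m)/2}u(r\omega)$ is nondecreasing along every ray, which upgrades the decay bound to
\[
u(x)\geq c_1\,|x|^{-(n-2m)/2}\qquad\text{for }|x|\geq R_0.
\]

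The remaining bootstrap produces the contradiction. If $u(y)\geq c|y|^{-\mu}$ for $|y|\geq R$, then estimating the contribution of the region $|y|\sim|x|$ in \eqref{IE1} gives $u(x)\gtrsim |x|^{a+2m-p\mu}$, so $\mu$ can be replaced by $\mu':=p\mu-(a+2m)$. The affine recursion $\mu_{k+1}=p\mu_k-(a+2m)$ starting at $\mu_0=(n-2m)/2$ has fixed point $\mu^\ast=(a+2m)/(p-1)$ when $p>1$, and a direct calculation identifies $\mu_0<\mu^\ast$ with exactly the subcritical hypothesis $p<p_c(a)$; hence $\mu_k$ strictly decreases and tends to $-\infty$. (The range $0<p\leq 1$ is handled analogously, using $a+2m>0$.) After finitely many iterations one reaches $\mu_k\leq (a+2m)/p$, whereupon
\[
\int_{|y|>2|x|}\frac{|y|^{a-p\mu_k}}{|x-y|^{n-2m}}\,dy=+\infty,
\]
so the right-hand side of \eqref{IE1} is infinite at every sufficiently large $x$, contradicting $u\in C(\mathbb{R}^n)$. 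Therefore $u\equiv 0$.

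The main obstacle is the scaling-spheres step: starting the comparison for small $\lambda$ requires a Hardy--Littlewood--Sobolev-type estimate on $B_\lambda^c$ adapted to the weight $|y|^a$, and pushing the comparison past any candidate $\lambda_0<\infty$ requires extracting \emph{strict} positivity from the subcritical factor $(\lambda_0/|x|)^{(p_c(a)-p)(n-2m)}$ via a narrow-region/continuity argument in $\lambda$. Once the improved lower bound $|x|^{-(n-2m)/2}$ is available, the closing bootstrap is a routine computation with the integral equation.
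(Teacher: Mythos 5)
Your proposal is correct and follows essentially the same route as the paper's proof: positivity and the fundamental-solution lower bound from \eqref{IE1}, the scaling-spheres step via Kelvin transform and a Hardy--Littlewood--Sobolev estimate to run $\lambda\to\infty$ (your $u\ge u_\lambda$ on $B_\lambda^c$ is equivalent, by the antisymmetry $\omega^\lambda=-(\omega^\lambda)_\lambda$, to the paper's $\omega^\lambda\ge 0$ on $B_\lambda$), yielding the bound $u\gtrsim |x|^{-(n-2m)/2}$, and then the bootstrap $\mu_{k+1}=p\mu_k-(a+2m)$ whose fixed-point analysis matches the paper's monotonicity claim exactly. The final contradiction you draw (divergence of the right side of \eqref{IE1} once $\mu_k\le(a+2m)/p$) is the same integrability obstruction the paper expresses via $u(0)<\infty$.
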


\begin{rem}\label{rem3}
One can see clearly from the proof that the assumption $u\in C(\mathbb{R}^{n})$ in Theorem \ref{Thm1} can be weaken into $|x|^{a}u^{p}\in L^{1}_{loc}(\mathbb{R}^{n})$ and $|x|^{a}u^{p-1}\in L^{\frac{n}{2m}}_{loc}(\mathbb{R}^{n})$ if $p>1$.
\end{rem}

As a consequence of Theorem \ref{equivalence1} and \ref{Thm1}, we obtain immediately the following Liouville type theorem on PDEs \eqref{PDE}.
\begin{thm}\label{Thm1'}
Assume $n\geq5$, $\alpha=2m$ with $2\leq m<\frac{n}{2}$, $0\leq a<+\infty$, $1<p<\frac{n+2m+2a}{n-2m}$. Suppose $u$ is a nonnegative solution of \eqref{PDE}, then $u\equiv0$ in $\mathbb{R}^{n}$.
\end{thm}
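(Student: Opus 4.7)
The plan is a two-step reduction to results already stated in the excerpt, so no genuinely new work is needed. Under the present hypotheses, the standing regularity convention on $u$ reads $u\in C^{2m}(\mathbb{R}^{n})$ when $a\geq 0$ and $u\in C^{2m}(\mathbb{R}^{n}\setminus\{0\})\cap C(\mathbb{R}^{n})$ when $-2m<a<0$; in either case $u\in C(\mathbb{R}^{n})$, so the continuity hypothesis of Theorem \ref{Thm1} is automatic. Since $p>1$, Theorem \ref{equivalence1} applies and shows that such a $u$ is also a nonnegative solution of the integral equation
\[
u(x)=\int_{\mathbb{R}^{n}}\frac{C_{n,m}}{|x-y|^{n-2m}}\,|y|^{a}u^{p}(y)\,dy.
\]

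The range $1<p<\frac{n+2m+2a}{n-2m}$ in the present statement is contained in the range $0<p<\frac{n+2m+2a}{n-2m}$ allowed by Theorem \ref{Thm1}, so Theorem \ref{Thm1} applied to the above integral equation immediately yields $u\equiv 0$ in $\mathbb{R}^{n}$, which is the desired conclusion.

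Hence the proof is essentially the composition of the equivalence Theorem \ref{equivalence1} with the Liouville property Theorem \ref{Thm1}. The substantive content has been absorbed into those two results: Theorem \ref{equivalence1} rests on the super poly-harmonic property $(-\Delta)^{i}u\geq 0$ for $1\leq i\leq m-1$ together with the Green's representation formula for $(-\Delta)^{m}$ on $\mathbb{R}^{n}$, while Theorem \ref{Thm1} is precisely where the method of scaling spheres in integral forms is used to cover the full subcritical range. There is therefore no real obstacle at this stage; the only point deserving attention is the routine verification that the regularity convention adopted at the start of the paper implies the $C(\mathbb{R}^{n})$ hypothesis of Theorem \ref{Thm1}, and this is immediate from the two cases listed above.
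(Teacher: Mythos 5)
Your proposal is correct and matches the paper exactly: the paper explicitly presents Theorem \ref{Thm1'} as an immediate consequence of the equivalence between PDEs \eqref{PDE} and IEs \eqref{IE1} (Theorem \ref{equivalence1}, valid for $1<p<\infty$) composed with the integral Liouville theorem (Theorem \ref{Thm1}). Your extra remark verifying that the standing regularity assumptions ensure $u\in C(\mathbb{R}^n)$, so that the hypothesis of Theorem \ref{Thm1} is met, is a sensible and correct piece of bookkeeping that the paper leaves implicit.
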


\begin{rem}\label{remark0}
In Theorem \ref{Thm0} and \ref{Thm1'}, the smoothness assumption on $u$ at $x=0$ is necessary. For $p>1$, equation \eqref{PDE} admits a distributional solution of the form $u(x)=C|x|^{-\sigma}$ with $\sigma=\frac{\alpha+a}{p-1}>0$.
\end{rem}

\begin{rem}\label{rem2}
Once we have established the equivalence between PDEs \eqref{PDE} and IEs \eqref{IE1} for $0<p\leq1$, then Liouville theorem for PDEs \eqref{PDE} follows immediately from Theorem \ref{Thm1}.
\end{rem}

\begin{rem}\label{remark1}
In the cases $a>0$ and $\frac{n+\alpha+a}{n-\alpha}\leq p<\frac{n+\alpha+2a}{n-\alpha}$, since equations \eqref{PDE} are not \emph{translation invariant}, Liouville theorems \emph{without global boundedness assumptions} (i.e., Theorem \ref{Thm0} and \ref{Thm1'}) \emph{can not} be deduced from the method of moving planes combining with Kelvin transforms, the method of moving spheres, and the integral estimates and feedback estimates arguments based on Pohozaev identities. We will use \emph{the method of scaling spheres} to overcome all these difficulties. \emph{The method of scaling spheres} developed here, in conjunction with the integral representation formulae of solutions and a ``Bootstrap" iteration process, will provide useful lower bound estimates for positive solutions as $|x|\rightarrow\infty$ (see Theorem \ref{lower0}, \ref{lower1}, \ref{lower2} and \ref{lower3}), which will lead to a contradiction on integrability of solutions indicated by the integral representation formulae unless the solution $u\equiv0$. As a consequence, Theorem \ref{Thm0} and \ref{Thm1} are proved. In addition to improving \emph{classical known results} to the \emph{critical exponents} in a \emph{unified and simple} way, we believe \emph{the method of scaling spheres} developed here can be applied conveniently to various fractional or higher order problems in $\mathbb{R}^{n}$, $\mathbb{R}^{n}_{+}$ or $\Omega$ \emph{with singularities} or \emph{without translation invariance} or in the cases that \emph{the method of moving planes in conjunction with Kelvin transforms do not work}. Possible applications also include problems on \emph{angular domains}, \emph{fan-shaped domains} and \emph{exterior domains}.
\end{rem}

\subsection{Liouville theorems on a half space $\mathbb{R}^{n}_{+}$}

Applying \emph{the method of scaling spheres}, we can also study the Liouville properties for H\'{e}non-Hardy type equations on a half space $\mathbb{R}^{n}_{+}$.

\smallskip
{\em (i)  The cases $0<\alpha\leq2$.}
\smallskip

We will first prove the Liouville theorem for the following H\'{e}non-Hardy equations with Dirichlet boundary conditions on a half space $\mathbb{R}^{n}_{+}$:
\begin{equation}\label{PDE+}\\\begin{cases}
(-\Delta)^{\frac{\alpha}{2}}u(x)=|x|^{a}u^{p}(x), \,\,\,\,\,\,\,\,\, u(x)\geq0, \,\,\,\,\,\,\,\,\, x\in\mathbb{R}^{n}_{+}, \\
u(x)=0, \,\,\,\,\,\,\,\, x\in\mathbb{R}^{n}\setminus\mathbb{R}^{n}_{+},
\end{cases}\end{equation}
where $n>\alpha$, $0<\alpha\leq2$, $-\alpha<a<+\infty$, $1\leq p<\frac{n+\alpha+2a}{n-\alpha}$ and $\mathbb{R}^{n}_{+}=\{x=(x_{1},\cdots,x_{n})\in\mathbb{R}^{n}\,|\,x_{n}>0\}$ be the upper half Euclidean space. We assume the solution $u\in \mathcal{L}_{\alpha}(\mathbb{R}^{n})\cap C^{1,1}_{loc}(\mathbb{R}^{n}_{+})\cap C(\overline{\mathbb{R}^{n}_{+}})$ if $0<\alpha<2$, $u\in C^{2}(\mathbb{R}^{n}_{+})\cap C(\overline{\mathbb{R}^{n}_{+}})$ if $\alpha=2$.

For $a=0$, there are many works on the Liouville type theorems for Lane-Emden equations on half space $\mathbb{R}^{n}_{+}$, for instance, see \cite{CFL,CFY,CLM,CLZ,CLZC,LZ,RW} and the references therein. In \cite{CFL}, Chen, Fang and Li established Liouville theorem for Navier problem of Lane-Emden equation \eqref{PDE+} on $\mathbb{R}^{n}_{+}$ in the higher order cases $\alpha=2m$ with $1\leq m<\frac{n}{2}$ and $\frac{n}{n-2m}<p\leq\frac{n+2m}{n-2m}$. For the fractional cases $0<\alpha<2$, Chen, Fang and Yang \cite{CFY} established Liouville theorem for Dirichlet problem of Lane-Emden equation \eqref{PDE+} on $\mathbb{R}^{n}_{+}$ in the cases $1<p\leq\frac{n+\alpha}{n-\alpha}$ using the method of moving planes in integral forms. Subsequently, Chen, Li and Zhang \cite{CLZ} reproved the Liouville theorem in \cite{CFY} via a quite different and much simpler approach - a combination of direct methods of moving spheres and moving planes.

In this paper, we will extend these known results for $a=0$ to general cases $-\alpha<a<+\infty$ and $1\leq p<\frac{n+\alpha+2a}{n-\alpha}$ by applying the \emph{(direct) method of scaling spheres} (see Section 4).

Our Liouville type result for \eqref{PDE+} is the following theorem.
\begin{thm}\label{Thm2}
Assume $n>\alpha$, $0<\alpha\leq2$, $-\alpha<a<+\infty$ and $1\leq p<\frac{n+\alpha+2a}{n-\alpha}$. Assume further $-1<a<+\infty$ if $\alpha=2$. Suppose $u$ is a nonnegative classical solution of PDE \eqref{PDE+} on $\mathbb{R}^{n}_{+}$, then $u\equiv0$ in $\overline{\mathbb{R}^{n}_{+}}$.
\end{thm}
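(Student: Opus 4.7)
The plan is to adapt the \emph{direct method of scaling spheres} outlined in the introduction to the Dirichlet problem on $\mathbb{R}^n_+$. First, I would convert \eqref{PDE+} into an equivalent integral equation. Let $G_\alpha(x,y)$ denote the Green's function of $(-\Delta)^{\alpha/2}$ on $\mathbb{R}^n_+$ with zero exterior/boundary data; for $\alpha=2$ it is the classical
\[
G_2(x,y)=c_n\bigl(|x-\bar y|^{2-n}-|x-y|^{2-n}\bigr),\qquad \bar y=(y',-y_n),
\]
and for $0<\alpha<2$ it is the explicit half-space Riesz-type kernel. Combining the maximum principle (for $\alpha=2$) or Silvestre/Caffarelli style regularity estimates (for $0<\alpha<2$) with a Liouville argument ruling out nontrivial bounded $\alpha$-harmonic residuals, one shows
\[
u(x)=\int_{\mathbb{R}^n_+}G_\alpha(x,y)\,|y|^a u^p(y)\,dy,\qquad x\in\mathbb{R}^n_+,
\]
so that either $u\equiv 0$ or $u>0$ throughout $\mathbb{R}^n_+$; I would assume the latter and seek a contradiction.

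Next I would extract an initial asymptotic lower bound. Since $G_\alpha(x,y)$ has Martin-kernel behavior of the form $c\,x_n|x|^{-\sigma}$ at infinity for $y$ in any fixed compact subset of $\mathbb{R}^n_+$, restricting the integral representation to such a subset on which $\int|y|^a u^p\,dy>0$ yields an initial bound $u(x)\ge C\,x_n(1+|x|)^{-\sigma_0}$ for some explicit $\sigma_0=\sigma_0(n,\alpha)$.

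The heart of the proof is the scaling-spheres comparison centered at the origin. For each $\lambda>0$, set $u_\lambda(x):=(\lambda/|x|)^{n-\alpha}u(\lambda^2 x/|x|^2)$ and $w_\lambda:=u_\lambda-u$ on the half ball $B_\lambda^+$. A symmetric rewriting of the integral representation under the Kelvin inversion $x\mapsto \lambda^2 x/|x|^2$, together with the observation that the weight $|y|^a$ transforms favorably exactly when $p<(n+\alpha+2a)/(n-\alpha)$, produces a contraction estimate on $(w_\lambda)_+$ that cannot be saturated at any finite radius; hence $u_\lambda\le u$ in $B_\lambda^+$ for every $\lambda>0$. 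Sending $\lambda\to\infty$ and optimizing in $\lambda$ improves the asymptotic lower bound to $u(x)\gtrsim x_n|x|^{-\mu_1}$ with $\mu_1<\sigma_0$. A bootstrap iteration then feeds each improved bound back into the integral representation, reducing the exponent by a uniform amount $\delta=\delta(n,\alpha,a,p)>0$ at each step (in the spirit of Theorems \ref{lower0}--\ref{lower3}). After finitely many iterations the polynomial decay is slow enough that $|y|^a u^p(y)$ becomes non-integrable against $G_\alpha(x_0,\cdot)$ at any fixed interior $x_0$, contradicting the finiteness of $u(x_0)$.

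The principal obstacles I anticipate are: (i) justifying the PDE--IE equivalence when $-\alpha<a<0$, where integrability near the origin must be controlled, and when $\alpha=2$ with $a$ close to $-1$, which is what forces the additional hypothesis $a>-1$ in the second order case; (ii) running the scaling-spheres comparison under the non-translation-invariant weight $|y|^a$, so that the Kelvin-transformed integrand picks up exactly the right power of $|x|$ to prevent the sphere from stopping at a finite radius and, in fact, to permit it to be pushed to infinity throughout the range $p<p_c(a)$; and (iii) keeping the bootstrap decrement $\delta$ bounded away from zero uniformly across all iterations, which is the decisive quantitative point and the one that distinguishes the method from moving planes combined with Kelvin transform, letting us reach the full subcritical range.
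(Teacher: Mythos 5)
Your proposal reproduces the paper's broad architecture (reduce to an integral representation, extract an initial boundary-weighted lower bound, run the Kelvin/scaling-spheres comparison, bootstrap, contradict integrability), but the central comparison inequality is stated with the wrong sign, and this is not a cosmetic slip. You define $w_\lambda=u_\lambda-u$ on $B_\lambda^+$ and claim to show $(w_\lambda)_+$ contracts to zero, i.e.\ $u_\lambda\le u$ on $B_\lambda^+$. What the method actually requires, and what the paper proves (Step 1 and Step 2 of the proof of Theorem \ref{lower2}), is the opposite: $\omega^\lambda=u_\lambda-u\ge 0$ on $B^+_\lambda(0)$ for all $\lambda$. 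Only then does the Kelvin anti-symmetry $\omega^\lambda(x)=-(\lambda/|x|)^{n-\alpha}\omega^\lambda(x^\lambda)$ give $u(x)\ge (\lambda/|x|)^{n-\alpha}u(\lambda^2 x/|x|^2)$ on the exterior $\{|x|\ge\lambda\}$, which is the inequality that transfers the positivity of $u$ near $0$ into a lower bound at infinity and feeds the bootstrap. With your sign one gets an \emph{upper} bound on $u$ at infinity, so the claimed improvement $u(x)\gtrsim x_n|x|^{-\mu_1}$ does not follow from the premise $u_\lambda\le u$ in $B^+_\lambda$; the argument stalls.

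Two further points. First, you attribute the restriction $a>-1$ (when $\alpha=2$) to the PDE--IE equivalence and to controlling integrability near the origin. In the paper the equivalence (Theorem \ref{equivalent+}) is proved for the full range $a>-\alpha$; the constraint $a>-1$ enters only in the Narrow region principle (Theorem \ref{NRP1+}), via the Alexandroff--Bakelman--Pucci estimate, which needs $|x|^a u^{p-1}\in L^n_{loc}$ near $\partial\mathbb{R}^n_+$. Your proposal never describes this narrow-region mechanism, which is the key technical device of the direct method here: in the half-space one must split the narrow region near $\partial B^+_\lambda$ into the annular piece $\{\lambda-l_1<|x|<\lambda\}$ and the slab $\{0<x_n<l_2\}$ and treat them differently. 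Your phrase ``contraction estimate'' suggests the Hardy--Littlewood--Sobolev/integral-forms route; the paper notes (Remark \ref{rem9}) that this alternative does work and even extends the range to $-2<a<\infty$ when $\alpha=2$, but that route needs the same corrected sign and a genuine contraction argument on $\|\omega^\lambda\|_{L^q(B^-_\lambda)}$, not on the positive part. Second, a minor but real inaccuracy: for $0<\alpha<2$ the boundary behaviour of the half-space Green's function is $x_n^{\alpha/2}$, not $x_n$, so the initial bound should be $u(x)\ge C\,x_n^{\alpha/2}|x|^{-n}$ as in the paper's estimate \eqref{2-2+}.
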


\begin{rem}\label{rem1}
When $a\neq0$, it remains an open problem to prove the Liouville theorem for nonnegative classical solutions of \eqref{PDE+} with critical exponent $p=\frac{n+\alpha+2a}{n-\alpha}$.
\end{rem}

\smallskip
{\em (ii)  The higher order cases $\alpha=2m$ with $1\leq m<\frac{n}{2}$.}
\smallskip

As another application of \emph{the method of scaling spheres}, we will also study the Liouville theorem for the following integral equations:
\begin{equation}\label{IE++}
  u(x)=\int_{\mathbb{R}^{n}_{+}}G^{+}_{m}(x,y)(y_{n})^{a}u^{p}(y)dy,
\end{equation}
where $1\leq m<\frac{n}{2}$, $-\frac{2m}{n}<a<+\infty$, $1\leq p<\frac{n+2m+2a}{n-2m}$ and Green's function $G^{+}_{m}(x,y)$ for $(-\Delta)^{m}$ with Navier boundary conditions on the half space $\mathbb{R}^{n}_{+}$ is given by
\begin{equation}\label{GREEN}
  G^{+}_{m}(x,y):=C_{n,m}\left(\frac{1}{|x-y|^{n-2m}}-\frac{1}{|\bar{x}-y|^{n-2m}}\right)
\end{equation}
and $\bar{x}:=(x_{1},x_{2},\cdots,-x_{n})$ is the reflection of $x$ with respect to the boundary $\partial\mathbb{R}^{n}_{+}$. Integral equations \eqref{IE++} are closely related to the following higher order H\'{e}non-Lane-Emden type equations with Navier boundary conditions on a half space $\mathbb{R}^{n}_{+}$:
\begin{equation}\label{PDE++}\\\begin{cases}
(-\Delta)^{m}u(x)=(x_{n})^{a}u^{p}(x), \,\,\,\,\,\,\,\,\, u(x)\geq0, \,\,\,\,\,\,\,\,\, x\in\mathbb{R}^{n}_{+}, \\
u(x)=-\Delta u(x)=\cdots=(-\Delta)^{m-1}u(x)=0, \,\,\,\,\,\,\,\, x\in\partial\mathbb{R}^{n}_{+},
\end{cases}\end{equation}
where $1\leq m<\frac{n}{2}$, $-\frac{2m}{n}<a<+\infty$, $1\leq p<\frac{n+2m+2a}{n-2m}$ and $u\in C^{2m}(\mathbb{R}^{n}_{+})\cap C^{2m-2}(\overline{\mathbb{R}^{n}_{+}})$.

In \cite{CC}, Cao and Chen established Liouville theorem (see also Proposition 2 in \cite{CFL}) for IEs \eqref{IE++} in the cases $a=0$ and $\frac{n}{n-2m}<p\leq\frac{n+2m}{n-2m}$. We will extend their results to general $a>-\frac{2m}{n}$ and $1\leq p<\frac{n+2m+2a}{n-2m}$ by applying \emph{the method of scaling spheres in integral forms} (see Section 5).

Our Liouville type result for IEs \eqref{IE++} is the following theorem.
\begin{thm}\label{Thm3}
Suppose $1\leq m<\frac{n}{2}$, $-\frac{2m}{n}<a<+\infty$, $1\leq p<\frac{n+2m+2a}{n-2m}$. If $u\in C(\overline{\mathbb{R}^{n}_{+}})$ is a nonnegative solution of IEs \eqref{IE++} on $\mathbb{R}^{n}_{+}$, then $u\equiv0$ in $\overline{\mathbb{R}^{n}_{+}}$.
\end{thm}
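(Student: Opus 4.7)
My plan is to adapt the \emph{method of scaling spheres in integral forms} (as used in Theorem \ref{Thm1}) to the half-space setting, exploiting the Kelvin-type symmetry of the Navier Green's function $G^+_m(x,y)$ under inversion in spheres centered at the origin $0\in\partial\mathbb{R}^n_+$. First I would verify by direct inspection of \eqref{IE++} that if $u\not\equiv 0$ then $u>0$ throughout $\mathbb{R}^n_+$, and establish a preliminary lower bound of the form $u(x)\geq c\, x_n$ on a fixed compact set $K\subset\mathbb{R}^n_+$ away from the boundary, together with a crude far-field estimate obtained by restricting the integral \eqref{IE++} to $K$ and using the two-sided bound
\begin{equation*}
G^+_m(x,y)\asymp \min\!\left\{\frac{1}{|x-y|^{n-2m}},\,\frac{x_n y_n}{|x-y|^{n-2m+2}}\right\}.
\end{equation*}

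Next comes the core scaling spheres step. For $\lambda>0$, define the Kelvin transform $u_\lambda(x):=(\lambda/|x|)^{n-2m}u(\lambda^2 x/|x|^2)$ and the region $\Sigma_\lambda:=\{x\in\mathbb{R}^n_+:|x|>\lambda\}$. Using the conformal covariance
\begin{equation*}
G^+_m(x,y)=\left(\tfrac{\lambda}{|x|}\right)^{\!n-2m}\!\left(\tfrac{\lambda}{|y|}\right)^{\!n-2m}G^+_m\!\left(\tfrac{\lambda^2 x}{|x|^2},\tfrac{\lambda^2 y}{|y|^2}\right)
\end{equation*}
and the fact that the reflection $\bar{\cdot}$ commutes with inversion in $\partial B_\lambda(0)$, one derives an integral equation for $u_\lambda$ with weight $(\lambda/|y|)^{\tau} y_n^{a}$, where $\tau:=n+2m+2a-p(n-2m)>0$ precisely in the subcritical regime. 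Subtracting then yields an integral inequality on the positive part $w_\lambda^+:=(u_\lambda-u)^+$ supported in $\Sigma_\lambda$, which after a Hardy-Littlewood-Sobolev estimate in an appropriate $L^q(\Sigma_\lambda)$ space gives
\begin{equation*}
\|w_\lambda^+\|_{L^q(\Sigma_\lambda)}\le C\,\Phi(\lambda)\,\|w_\lambda^+\|_{L^q(\Sigma_\lambda)},
\end{equation*}
with $\Phi(\lambda)\to 0$ as $\lambda\to 0^+$. Thus for $\lambda$ small the inequality $u\geq u_\lambda$ holds on $\Sigma_\lambda$. Because the sphere is only scaled (not moved) and the weights $|y|^a$, $(\lambda/|y|)^\tau$ are monotone in the right sense, I would next show that this inequality actually persists for \emph{every} $\lambda>0$ by continuity along $\lambda$, using that the condition $a>-2m/n$ guarantees the integrability of $y_n^a u^p$ near $0$. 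Rescaling the resulting estimate at $\lambda\to\infty$ produces the asymptotic lower bound $u(x)\geq C\,x_n\,|x|^{-(n-2m)}$ for $|x|$ large.

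With this initial lower bound in hand, I would run a bootstrap iteration by inserting $u(y)\geq C\, y_n\,|y|^{-\mu_k}$ into the right-hand side of \eqref{IE++}, using the two-sided bound for $G^+_m$ and splitting the integration into near/far regions to produce $u(x)\geq C\, x_n\,|x|^{-\mu_{k+1}}$ with $\mu_{k+1}=p\mu_k-(a+2m)-p$ (up to harmless error terms coming from the $x_n,y_n$ factors). For $p$ strictly subcritical, namely $p<(n+2m+2a)/(n-2m)$, the linear recursion $\mu\mapsto p\mu-(a+2m)-p$ drives $\mu_k$ below every preassigned threshold, so eventually $\mu_{k}< (a+2m+p)/(p-1)$ (or, when $p=1$, an arithmetic progression handles it by direct summation). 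This forces $\int_{\mathbb{R}^n_+} y_n^{a} u^p(y)\,dy=+\infty$, contradicting the finiteness of $u(x)$ at any fixed $x\in\mathbb{R}^n_+$ via \eqref{IE++}. I expect the main technical obstacle to be the careful bookkeeping of the $x_n,y_n$ factors throughout the scaling spheres step and the bootstrap: unlike the full-space case of Theorem \ref{Thm1}, one must simultaneously track the boundary behaviour and the far-field decay, and the admissible range $a>-2m/n$ (rather than the $a>-2m$ that the PDE formally allows) enters precisely at the integrability checks near the boundary, both when justifying $u_\lambda$ for small $\lambda$ and when converting the bootstrap lower bound into the final contradiction.
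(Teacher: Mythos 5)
Your overall strategy is the same as the paper's: establish the equivalence (here trivially, since \eqref{IE++} is given directly), derive a preliminary lower bound from the Green's function kernel, run the \emph{method of scaling spheres in integral forms} with a Kelvin transform centered at $0\in\partial\mathbb{R}^n_+$ together with a Hardy--Littlewood--Sobolev estimate to push $\lambda_0=+\infty$, and then bootstrap a power-law lower bound until it contradicts the integrability coming from \eqref{IE++}. The formulation you use (working in $\Sigma_\lambda=\{|x|>\lambda\}$ with $w_\lambda^+=(u_\lambda-u)^+$) is equivalent by the Kelvin symmetry to the paper's choice of working inside $B^+_\lambda(0)$ with $\omega^\lambda\geq 0$. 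The identification of where $a>-2m/n$ is used (local integrability of $y_n^{an/(2m)}$ near the boundary in the HLS estimate) is also correct.

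However, there is a quantitative error in the initial lower bound that you extract from the scaling spheres step, and it is not harmless: you claim $u(x)\geq C\,x_n\,|x|^{-(n-2m)}$ for $|x|$ large. What the scaling spheres inequality $u(x)\geq(\lambda/|x|)^{n-2m}u(\lambda^2 x/|x|^2)$ for all $|x|\geq\lambda>0$ actually gives (taking $\lambda=\sqrt{|x|}$, so that $\lambda^2 x/|x|^2=x/|x|$) is $u(x)\geq |x|^{-(n-2m)/2}\,u(x/|x|)$, whence $u(x)\geq C\,x_n\,|x|^{-(n-2m+2)/2}$ after inserting the linear boundary behaviour of $u$ on the unit half-sphere, or simply $u(x)\geq C_0\,|x|^{-(n-2m)/2}$ if you restrict to the cone $x_n\geq|x|/\sqrt n$ as the paper does. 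Your claimed exponent $n-2m$ is substantially larger (weaker bound) than $(n-2m+2)/2$; with it, the recursion $\mu_{k+1}=p\mu_k-(a+2m)-p$ has fixed point $\mu^*=(a+2m+p)/(p-1)$, and the requirement $\mu_0=n-2m<\mu^*$ reduces to $p<(n+a)/(n-2m-1)$, which is strictly below $(n+2m+2a)/(n-2m)$ for $n$ large (already for $a=0,m=1,n\geq 7$). So your argument as stated does not cover the full subcritical range; the ``up to harmless error terms'' caveat covers only an additive $O(1)$ slip in the recursion constant, not the factor-of-two error in the starting exponent. With the correct $\mu_0=(n-2m+2)/2$ and the recursion $\mu_{k+1}=p\mu_k-(2m+a+p-1)$ (which is what tracking $x_n^1$ on both sides actually produces), the threshold condition $\mu_0<\mu^*$ becomes $(n-2m)/2<(2m+a)/(p-1)$, i.e.\ exactly $p<p_c(a)$, so the approach closes. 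The paper avoids the $x_n$-bookkeeping altogether by confining the estimate to the fixed cone $x_n\geq|x|/\sqrt n$, where $x_n\sim|x|$, so that the recursion becomes the clean $\mu_{k+1}=p\mu_k-(2m+a)$ starting from $\mu_0=(n-2m)/2$ and the contradiction is read off from $u(e_n)<\infty$ against $\int_{\text{cone}}u^p(y)|y|^{-(n-2m+1-a)}\,dy$; this is simpler and you may want to adopt it.
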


\begin{rem}\label{rem4}
One can see clearly from the proof that the assumption $u\in C(\overline{\mathbb{R}^{n}_{+}})$ in Theorem \ref{Thm3} can be weaken into $(x_{n})^{a}u^{p-1}\in L^{\frac{n}{2m}}_{loc}(\overline{\mathbb{R}^{n}_{+}})$ if $p>1$.
\end{rem}

\begin{rem}\label{rem1'}
When $a\neq0$, it remains an open problem to prove the Liouville theorem for nonnegative solutions of \eqref{IE++} with critical exponent $p=\frac{n+2m+2a}{n-2m}$.
\end{rem}

In the special case $a=0$, Chen, Fang and Li proved the super poly-harmonic properties of solution $u$ to PDEs \eqref{PDE++} in Theorem 3 in \cite{CFL}, that is, $(-\Delta)^{i}u\geq0$ for every $i=1,2,\cdots,m-1$. Furthermore, in Theorem 1 and Theorem 2 in \cite{CFL}, they also established the equivalence between PDEs \eqref{PDE++} and IEs \eqref{IE++} for $a=0$ and any $1<p<+\infty$. This means, the nonnegative classical solution $u$ to PDEs \eqref{PDE++} also satisfies the equivalent IEs \eqref{IE++}, and vice versa. As a consequence, Chen, Fang and Li \cite{CFL} derived Liouville theorem (Theorem 5 in \cite{CFL}) for PDEs \eqref{PDE++} in the cases $a=0$ and $\frac{n}{n-2m}<p\leq\frac{n+2m}{n-2m}$. Combining our Theorem \ref{Thm3} and Theorem 1 and Theorem 2 in \cite{CFL}, we can obtain Liouville result for PDEs \eqref{PDE++} in the cases $a=0$ and $1<p<\frac{n+2m}{n-2m}$, which extends the results in \cite{CFL}.

Our Liouville type result for PDEs \eqref{PDE++} with $a=0$ is the following corollary.
\begin{cor}\label{Thm3''}
Suppose $a=0$, $1\leq m<\frac{n}{2}$ and $1<p<\frac{n+2m}{n-2m}$. If u is a nonnegative classical solution of Navier problem \eqref{PDE++} on $\mathbb{R}^{n}_{+}$, then $u\equiv0$ in $\overline{\mathbb{R}^{n}_{+}}$.
\end{cor}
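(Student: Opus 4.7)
The plan is essentially to combine the equivalence results of Chen--Fang--Li \cite{CFL} with our integral Liouville theorem \ref{Thm3}, since the corollary is stated as a direct consequence of these two inputs. Let $u$ be a nonnegative classical solution of the Navier problem \eqref{PDE++} with $a=0$ and $1<p<\frac{n+2m}{n-2m}$.

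First, I would invoke the super poly-harmonic property established in Theorem 3 of \cite{CFL}: under the Navier boundary conditions $u=-\Delta u=\cdots=(-\Delta)^{m-1}u=0$ on $\partial\mathbb{R}^n_+$ and for any $p>1$, one has $(-\Delta)^{i}u\geq 0$ on $\mathbb{R}^n_+$ for every $i=1,2,\ldots,m-1$. This positivity of the intermediate poly-harmonic quantities is exactly what is needed to iterate the Green's representation of $(-\Delta)^{-1}$ against the half-space Green's function $G^+_1(x,y)$.

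Second, I would use Theorems 1 and 2 of \cite{CFL}, which, given the super poly-harmonic property, establish the equivalence between the Navier problem \eqref{PDE++} and the integral equation \eqref{IE++} for any $1<p<+\infty$ in the case $a=0$. Concretely, iterating the one-step Green's representation $m$ times and combining with the super poly-harmonic property yields
\begin{equation*}
u(x)=\int_{\mathbb{R}^{n}_{+}}G^{+}_{m}(x,y)\,u^{p}(y)\,dy,\qquad x\in\mathbb{R}^n_+,
\end{equation*}
with $G^+_m$ as defined in \eqref{GREEN}. In particular, $u$ is a nonnegative continuous solution of IE \eqref{IE++} with $a=0$.

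Third, I would apply Theorem \ref{Thm3} to this integral equation. Since the parameters satisfy $1\leq m<n/2$, $a=0\in(-\frac{2m}{n},+\infty)$, and $1<p<\frac{n+2m}{n-2m}=\frac{n+2m+2a}{n-2m}$ with $a=0$, all hypotheses of Theorem \ref{Thm3} are met. The conclusion $u\equiv 0$ in $\overline{\mathbb{R}^n_+}$ follows immediately, and extending by the boundary condition finishes the corollary. There is no substantive obstacle here beyond quoting the equivalence, since the genuine analytic work -- the method of scaling spheres in integral form on the half-space -- is entirely absorbed into Theorem \ref{Thm3}; the only small point to verify carefully is the parameter range $a=0>-\frac{2m}{n}$, which is trivially satisfied.
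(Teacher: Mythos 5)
Your argument is exactly the one the paper itself gives: invoke the super poly-harmonic property (Theorem 3 of \cite{CFL}) and the PDE--IE equivalence (Theorems 1 and 2 of \cite{CFL}) to pass from the Navier problem \eqref{PDE++} to the integral equation \eqref{IE++}, then apply Theorem \ref{Thm3} with $a=0$ and $1<p<\frac{n+2m}{n-2m}$. The parameter check $a=0>-\frac{2m}{n}$ is correct, and no further work is needed.
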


\begin{rem}\label{rem5}
Once we have established the equivalence between PDEs \eqref{PDE++} and IEs \eqref{IE++} for general $a\geq0$, then Liouville theorem for PDEs \eqref{PDE++} follows immediately from Theorem \ref{Thm3}.
\end{rem}

Combining Corollary \ref{Thm3''} with Theorem 5 in \cite{CFL}, we get Liouville theorem for PDEs \eqref{PDE++} with $a=0$ in the full range $1<p\leq\frac{n+2m}{n-2m}$.
\begin{thm}\label{Thm3'}
Suppose $a=0$, $1\leq m<\frac{n}{2}$ and $1<p\leq\frac{n+2m}{n-2m}$. If u is a nonnegative classical solution of Navier problem \eqref{PDE++} on $\mathbb{R}^{n}_{+}$, then $u\equiv0$ in $\overline{\mathbb{R}^{n}_{+}}$.
\end{thm}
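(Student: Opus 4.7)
The plan is simply to split the range $1 < p \leq \frac{n+2m}{n-2m}$ at the critical Sobolev exponent and invoke two already-established results. For the strictly subcritical range $1 < p < \frac{n+2m}{n-2m}$, I would apply Corollary \ref{Thm3''} directly. For the endpoint $p = \frac{n+2m}{n-2m}$, I would apply Theorem 5 of \cite{CFL}, which covers $\frac{n}{n-2m} < p \leq \frac{n+2m}{n-2m}$ and in particular handles the critical exponent via a combination of the super poly-harmonicity property, the PDE--IE equivalence from Theorems 1 and 2 of \cite{CFL}, and the method of moving planes with Kelvin transform on the equivalent integral equation \eqref{IE++}. Since the union of the two ranges is $(1, \frac{n+2m}{n-2m}]$, the conclusion $u \equiv 0$ follows throughout.

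To unpack the subcritical step, the argument inside Corollary \ref{Thm3''} is itself a short composition. Theorems 1 and 2 of \cite{CFL} first yield the super poly-harmonic estimates $(-\Delta)^i u \geq 0$ for $i = 1, \ldots, m-1$ and then convert any nonnegative classical solution of \eqref{PDE++} with $a=0$ into a nonnegative continuous solution of the integral equation \eqref{IE++}. Theorem \ref{Thm3} of the present paper, proved by the method of scaling spheres in integral forms, then annihilates that integral equation, so $u \equiv 0$ in $\overline{\mathbb{R}^{n}_{+}}$.

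The real obstacle --- or rather, the structural reason one must split the argument at all --- is that the method of scaling spheres produces, via its ``Bootstrap'' iteration, progressively sharper asymptotic lower bounds on $u$ that are eventually inconsistent with the integrability forced by the representation formula. This mechanism requires the scaling balance to be strictly subcritical; at $p = \frac{n+2m}{n-2m}$ the exponents align and the iteration no longer improves. Consequently, the critical endpoint cannot be captured by scaling spheres in this setting and must be imported from \cite{CFL}, where the conformal invariance is exploited through Kelvin transform and moving planes. Putting the two pieces together gives the full range asserted in the theorem.
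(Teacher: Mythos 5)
Your proof matches the paper's argument exactly: the paper obtains Theorem \ref{Thm3'} by combining Corollary \ref{Thm3''} (which itself comes from the PDE--IE equivalence of Theorems 1 and 2 in \cite{CFL} plus Theorem \ref{Thm3}) for the subcritical range $1<p<\frac{n+2m}{n-2m}$ with Theorem 5 of \cite{CFL} for the remaining range up to and including the critical exponent. Your explanation of why the scaling-spheres iteration stalls at criticality, and hence why the endpoint must be imported from \cite{CFL}, is a correct gloss on the same structure.
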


\begin{rem}\label{remark6}
It follows from Theorem \ref{Thm2} for $\alpha=2$ and $a=0$ that, when $\alpha=2$, Theorem \ref{Thm3'} also holds for $p=1$.
\end{rem}

\begin{rem}\label{remark2}
We would like to mention other applications of \emph{the method of scaling spheres}. For instance, it is an interesting open problem to extend the Liouville theorem for higher order Lane-Emden equations (Theorem \ref{Thm3'}) to general higher order H\'{e}non-Hardy type equations with Navier or Dirichlet boundary conditions on a half space $\mathbb{R}^{n}_{+}$.
\end{rem}

\subsection{A priori estimates and existence of positive solutions in bounded domains}

As an immediate application of the Liouville theorem (Theorem \ref{Thm3'}), we derive a priori estimates and existence of positive solutions to higher order Lane-Emden equations in bounded domains for all $1<p<\frac{n+2m}{n-2m}$.

In general, let the higher order uniformly elliptic operator $L$ be defined by
\begin{eqnarray}\label{0-0}
  L &:=& \left(\sum_{i,j=1}^{n}a_{ij}(x)\frac{\partial^{2}}{\partial x_{i}\partial x_{j}}\right)^{m}+\sum_{|\beta|\leq2m-1}b_{\beta}(x)D^{\beta} \\
 \nonumber &=:& A^{m}+\sum_{|\beta|\leq2m-1}b_{\beta}(x)D^{\beta},
\end{eqnarray}
where the coefficients $b_{\beta}\in L^{\infty}(\Omega)$ and $a_{ij}\in C^{2m-2}(\overline{\Omega})$ such that there exists constant $\tau>0$ with
\begin{equation}\label{0-2}
  \tau|\xi|^{2}\leq\sum_{i,j=1}^{n}a_{ij}(x)\xi_{i}\xi_{j}\leq\tau^{-1}|\xi|^{2}, \,\,\,\,\,\,\,\,\,\, \forall \,\, \xi\in\mathbb{R}^{n}, \,\, x\in\Omega.
\end{equation}
Consider the Navier boundary value problem:
\begin{equation}\label{PDE-N}\\\begin{cases}
Lu(x)=f(x,u), \,\,\,\,\,\,\,\,\,\,\,\,\,\,\,\, x\in\Omega, \\
u(x)=Au(x)=\cdots=A^{m-1}u(x)=0, \,\,\,\,\,\,\,\, x\in\partial\Omega,
\end{cases}\end{equation}
where $n\geq3$, $1\leq m<\frac{n}{2}$, $u\in C^{2m}(\Omega)\cap C^{2m-2}(\overline{\Omega})$ and $\Omega$ is a bounded domain with boundary $\partial\Omega\in C^{2m-2}$.

By virtue of the Liouville theorem in $\mathbb{R}^{n}_{+}$ (Theorem 5 in \cite{CFL}) and Liouville theorem in $\mathbb{R}^{n}$ in \cite{WX}, using the blowing-up and re-scaling methods, Chen, Fang and Li \cite{CFL} obtained a priori estimates for the Navier problem \eqref{PDE-N} in the cases $\frac{n}{n-2m}<p<\frac{n+2m}{n-2m}$ (Theorem 6 in \cite{CFL}). Since Theorem \ref{Thm3'} extends Theorem 5 in \cite{CFL} from $\frac{n}{n-2m}<p\leq\frac{n+2m}{n-2m}$ to the full range $1<p\leq\frac{n+2m}{n-2m}$, through entirely similar blowing-up techniques, we can derive the following a priori estimate for classical solutions (possibly sign-changing solutions) to the Navier problem \eqref{PDE-N} in the full range $1<p<\frac{n+2m}{n-2m}$.
\begin{thm}\label{Thm4}
Assume $1<p<\frac{n+2m}{n-2m}$ and there exist positive, continuous functions $h(x)$ and $k(x)$: $\overline{\Omega}\rightarrow(0,+\infty)$ such that
\begin{equation}\label{0-3}
  \lim_{s\rightarrow+\infty}\frac{f(x,s)}{s^{p}}=h(x), \,\,\,\,\,\,\,\,\,\,\,\, \lim_{s\rightarrow-\infty}\frac{f(x,s)}{|s|^{p}}=k(x)
\end{equation}
uniformly with respect to $x\in\overline{\Omega}$. Then there exists a constant $C>0$ depending only on $\Omega$, $n$, $m$, $p$, $h(x)$, $k(x)$, such that
\begin{equation}\label{0-4}
  \|u\|_{L^{\infty}(\overline{\Omega})}\leq C
\end{equation}
for every classical solution $u$ of problem \eqref{PDE-N}.
\end{thm}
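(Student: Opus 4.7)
The plan is to prove Theorem \ref{Thm4} by a standard blow-up (scaling) argument of Gidas--Spruck type, following the strategy employed in \cite{CFL} but invoking Theorem \ref{Thm3'} of the present paper — which now covers the full subcritical range $1<p<\frac{n+2m}{n-2m}$ — in place of Theorem 5 of \cite{CFL}. Argue by contradiction: suppose there exists a sequence of classical solutions $\{u_k\}$ of \eqref{PDE-N} with $M_k:=\|u_k\|_{L^\infty(\overline{\Omega})}\to\infty$. Pick $x_k\in\overline{\Omega}$ with $|u_k(x_k)|=M_k$ and, passing to subsequences, assume $x_k\to x_0\in\overline{\Omega}$ and that $u_k(x_k)=\varepsilon_kM_k$ with $\varepsilon_k\in\{+1,-1\}$ fixed. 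Set the blow-up scale $\lambda_k:=M_k^{-(p-1)/(2m)}\to 0$ and rescale
\[
 v_k(y):=\varepsilon_k M_k^{-1}u_k(x_k+\lambda_k y),\qquad y\in\Omega_k:=\lambda_k^{-1}(\Omega-x_k),
\]
so that $v_k(0)=1$, $\|v_k\|_{L^\infty(\Omega_k)}\leq1$, and $v_k$ satisfies a rescaled equation $L_kv_k(y)=\lambda_k^{2m}M_k^{-1}f(x_k+\lambda_ky,\varepsilon_kM_kv_k(y))$, where in $L_k$ the leading part $A_k^m$ has coefficients $a_{ij}(x_k+\lambda_ky)\to a_{ij}(x_0)$ and the lower-order terms carry positive powers of $\lambda_k$ and thus vanish in the limit.

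Next, exploit interior $W^{2m,q}$ and Schauder estimates for the uniformly elliptic operator $L_k$ on the bounded family $\{v_k\}$ to extract a subsequence converging in $C^{2m-1}_{\rm loc}$ to a function $v$. Using the asymptotic hypothesis \eqref{0-3}, at every point $y$ where $v(y)\neq0$ one has $|M_kv_k(y)|\to\infty$, and hence $M_k^{-p}f(x_k+\lambda_ky,\varepsilon_kM_kv_k)\to\phi(v(y))$, where $\phi(s)=h(x_0)s^p$ for $s\geq0$ and $\phi(s)=k(x_0)|s|^p$ for $s<0$. After a linear change of coordinates that normalises $A^m|_{x_0}$ to $(-\Delta)^m$ (absorbing the constant Jacobian into the coefficient $h(x_0)$ or $k(x_0)$), the limit $v$ is a bounded classical solution of a limiting equation on the limit domain with $v(0)=1$.

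Finally, distinguish two cases according to the behaviour of $d_k:=\operatorname{dist}(x_k,\partial\Omega)$. In the \emph{interior case} $d_k/\lambda_k\to\infty$, one has $\Omega_k\to\mathbb{R}^n$, and the super poly-harmonic property of $v$ (inherited from the bounded nonnegative right-hand side $\phi(v)\geq0$ together with the limit equation $(-\Delta)^mv=\phi(v)$) combined with the maximum principle forces $v\geq0$; then $v$ solves $(-\Delta)^mv=h(x_0)v^p$ in $\mathbb{R}^n$ with $1<p<\frac{n+2m}{n-2m}$, and Wei--Xu's Liouville theorem \cite{WX} yields $v\equiv0$, contradicting $v(0)=1$. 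In the \emph{boundary case} $d_k/\lambda_k$ bounded, one straightens $\partial\Omega$ near $x_0$ via a $C^{2m-2}$ diffeomorphism; after the additional coordinate transformation the rescaled domains $\Omega_k$ converge to a half space $\mathbb{R}^n_+$ and the Navier conditions $u_k=Au_k=\cdots=A^{m-1}u_k=0$ on $\partial\Omega$ transfer to $v=(-\Delta)v=\cdots=(-\Delta)^{m-1}v=0$ on $\partial\mathbb{R}^n_+$; thus $v$ solves \eqref{PDE++} with $a=0$, and Theorem \ref{Thm3'} gives $v\equiv0$, again contradicting $v(0)=1$.

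The main obstacle is the boundary case: one must show that the Navier boundary conditions survive both the straightening diffeomorphism and the rescaling, that the lower-order perturbations produced by the change of variables vanish in the limit under the scale $\lambda_k\to0$, and that the compactness provided by elliptic regularity extends up to the boundary. A secondary subtlety is the reduction to a nonnegative limit $v$ in the sign-changing setting; this is handled in the standard fashion, by observing that if $\varepsilon_k=-1$ then the roles of $h$ and $k$ are exchanged (by the symmetry $v\mapsto-v$, noting that $\phi$ is effectively a power nonlinearity on each half-line), so that in both sign scenarios the limiting problem is of the form covered by Theorem \ref{Thm3'} and \cite{WX}.
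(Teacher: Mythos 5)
The paper offers no proof of Theorem~\ref{Thm4}: Remark~\ref{remark3} simply states that one follows the proof of Theorem~6 in \cite{CFL} verbatim, substituting Theorem~\ref{Thm3'} for the Liouville theorem used there. Your overall scheme — contradiction, blow-up at scale $\lambda_k=M_k^{-(p-1)/(2m)}$, elliptic compactness, interior case handled by Wei--Xu \cite{WX} and boundary case by Theorem~\ref{Thm3'} — is exactly this argument. So structurally you are on target.

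However, your treatment of the sign-changing nature of $u_k$ contains a genuine error. You claim that when $\varepsilon_k=-1$ ``the roles of $h$ and $k$ are exchanged'' and that both sign scenarios fall under the same Liouville theorems. Carrying out the rescaling with $\varepsilon_k=-1$ actually gives
\[
(-\Delta)^m v_k(y) = -\,M_k^{-p}\,f\!\left(x_k+\lambda_k y,\,-M_k v_k(y)\right),
\]
and at a point where $v(y)>0$ the argument of $f$ tends to $-\infty$, so the limit of the right side is $-k(x_0)\,v(y)^p<0$; at points where $v(y)<0$ the limit is $-h(x_0)|v(y)|^p<0$. Thus the limiting equation has a \emph{negative} right-hand side. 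This is not a reshuffle of $h$ and $k$ — the sign of the nonlinearity flips, and neither Theorem~\ref{Thm3'} nor \cite{WX} (which are stated for nonnegative solutions of $(-\Delta)^m v = c\,v^p$ with $c>0$) applies. Relatedly, your argument that super poly-harmonicity ``forces $v\geq0$'' is circular: the super poly-harmonic lemmas you would invoke (Theorem~3 of \cite{CFL}, Theorem~2.1 of \cite{DPQ}) take $v\geq0$ as a hypothesis, and a bounded function with $(-\Delta)^m v\geq0$ on $\mathbb{R}^n$ need not be nonnegative.

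Both problems are resolved by the same step, which must be performed \emph{before} rescaling, on the original solutions $u_k$ over the bounded domain $\Omega$. Since \eqref{0-3} holds uniformly with $h,k>0$ continuous on $\overline{\Omega}$, one has $f(x,s)\geq -C_0$ for all $(x,s)$. Applying the maximum principle iteratively to $A^{m-1}u_k,\,A^{m-2}u_k,\ldots,u_k$ using the Navier boundary data $u_k=Au_k=\cdots=A^{m-1}u_k=0$ on $\partial\Omega$ yields a uniform lower bound $u_k\geq -C_\Omega$. Consequently $\max_\Omega u_k = \|u_k\|_{L^\infty}=M_k$ once $M_k>C_\Omega$, so $\varepsilon_k=+1$ always (the negative-minimum blow-up never occurs), and $v_k\geq -C_\Omega/M_k\to0$ locally uniformly, which gives $v\geq0$ in both the interior and boundary cases and lets you invoke \cite{WX} and Theorem~\ref{Thm3'} as intended.
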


\begin{rem}\label{remark3}
The proof of Theorem \ref{Thm4} is entirely similar to that of Theorem 6 in \cite{CFL}. We only need to replace Theorem 5 in \cite{CFL} by Theorem \ref{Thm3'} in the proof. Thus we omit the details of the proof.
\end{rem}

One can immediately apply Theorem \ref{Thm4} to the following higher order Navier problem
\begin{equation}\label{tNavier}\\\begin{cases}
(-\Delta)^{m}u(x)=u^{p}(x)+t \,\,\,\,\,\,\,\,\,\, \text{in} \,\,\, \Omega, \\
u(x)=-\Delta u(x)=\cdots=(-\Delta)^{m-1}u(x)=0 \,\,\,\,\,\,\,\, \text{on} \,\,\, \partial\Omega,
\end{cases}\end{equation}
where $n\geq3$, $1\leq m<\frac{n}{2}$, $\Omega\subset\mathbb{R}^{n}$ is a bounded domain with $C^{2m-2}$ boundary $\partial\Omega$ and $t$ is an arbitrary nonnegative real number.

We can deduce the following corollary from Theorem \ref{Thm4}.
\begin{cor}\label{cor1}
Assume $1<p<\frac{n+2m}{n-2m}$. Then, for any nonnegative solution $u\in C^{2m}(\Omega)\cap C^{2m-2}(\overline{\Omega})$ to the higher order Navier problem \eqref{tNavier}, we have
\begin{equation}\label{0-5}
  \|u\|_{L^{\infty}(\overline{\Omega})}\leq C(n,m,p,\Omega).
\end{equation}
\end{cor}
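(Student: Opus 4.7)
The plan is to prove Corollary \ref{cor1} by a standard Gidas--Spruck blow-up argument, combining the Liouville theorems in $\mathbb{R}^n$ (Theorem \ref{Thm1'}, and Theorem \ref{Thm0} with $\alpha=2$ in the second order case $m=1$) and in $\mathbb{R}^n_+$ (Theorem \ref{Thm3'}) with a preliminary step that controls $t$ linearly by $\|u\|_{L^\infty(\Omega)}$. One cannot simply invoke Theorem \ref{Thm4} with $f(x,s)=s^p+t$, since the constant produced there depends on the nonlinearity and hence on $t$; the rescaling must therefore be performed by hand with $t$ treated as a parameter.

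\emph{Step 1 (bounding $t$).} Let $G_m^N \geq 0$ be the Green's function of $(-\Delta)^m$ on $\Omega$ under Navier boundary conditions (positivity follows by iterating the second-order maximum principle through the cascade $-\Delta w_i = w_{i+1}$, $w_i|_{\partial\Omega}=0$), and let $\psi$ be the Navier solution of $(-\Delta)^m \psi = 1$ on $\Omega$, which is strictly positive on $\Omega$. For any nonnegative solution $u$ of \eqref{tNavier}, the Green representation $u(x) = \int_\Omega G_m^N(x,y)(u^p(y) + t)\,dy$ gives $u(x) \geq t\,\psi(x)$, hence $t \leq \|u\|_{L^\infty(\Omega)}/\|\psi\|_{L^\infty(\Omega)} =: C(\Omega)\|u\|_{L^\infty(\Omega)}$.

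\emph{Step 2 (blow-up).} Assume for contradiction a sequence $(u_k, t_k)$ of nonnegative solutions of \eqref{tNavier} with $M_k := \|u_k\|_{L^\infty(\Omega)} \to \infty$. Pick $x_k \in \Omega$ with $u_k(x_k) = M_k$ (an interior point, by the boundary condition), set $\lambda_k := M_k^{-(p-1)/(2m)} \to 0$, and define
\[
v_k(y) := \frac{u_k(x_k + \lambda_k y)}{M_k}, \qquad y \in \Omega_k := \lambda_k^{-1}(\Omega - x_k).
\]
Since $\lambda_k^{2m} M_k^{p-1} = 1$, the chain rule yields
\[
(-\Delta)^m v_k(y) = v_k^p(y) + \epsilon_k \quad \text{in } \Omega_k, \qquad \epsilon_k := t_k M_k^{-p},
\]
with $v_k$ inheriting Navier boundary data on $\partial\Omega_k$, $0 \leq v_k \leq 1$, and $v_k(0) = 1$. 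By Step 1, $\epsilon_k \leq C(\Omega)\,M_k^{1-p} \to 0$ because $p > 1$.

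\emph{Step 3 (passage to the limit and contradiction).} Uniform boundedness of $v_k$ together with iterated $L^p$/Schauder regularity for $(-\Delta)^m$ on domains of class $C^{2m-2}$ provide uniform $C^{2m,\alpha}_{loc}$ bounds; a subsequence converges in $C^{2m}_{loc}$ on the limiting domain. Set $d_k := \mathrm{dist}(x_k, \partial\Omega)$. If $d_k/\lambda_k \to +\infty$, the domains $\Omega_k$ exhaust $\mathbb{R}^n$ and the limit $v \in C^{2m}(\mathbb{R}^n)$ is a nonnegative solution of $(-\Delta)^m v = v^p$ with $v(0) = 1$, violating Theorem \ref{Thm1'} (or Theorem \ref{Thm0} when $m=1$). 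Otherwise $d_k/\lambda_k \to d_\infty \in [0,\infty)$; taking $\tilde x_k \in \partial\Omega$ nearest to $x_k$ and rotating so the inward normal at $\tilde x_k$ is $e_n$, the reparametrization $\tilde v_k(y) := u_k(\tilde x_k + \lambda_k y)/M_k$ converges on $\overline{\mathbb{R}^n_+}$ to a nonnegative solution $\tilde v$ of \eqref{PDE++} with $a = 0$, and $\tilde v_k((d_k/\lambda_k)e_n) = 1$ passes to $\tilde v(d_\infty e_n) = 1$; Theorem \ref{Thm3'} then forces $\tilde v \equiv 0$, yielding the contradiction. The principal technical obstacle is this last step: verifying convergence up to the boundary, i.e. nontriviality of $\tilde v$ and persistence of the Navier boundary data in the limit, which rests on standard but non-trivial Schauder/$L^p$ regularity up to the boundary for the polyharmonic operator on $C^{2m-2}$ domains.
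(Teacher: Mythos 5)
Your proof is correct and, unlike the paper's treatment, actually supplies the argument: the paper derives Corollary~\ref{cor1} in one line as a direct consequence of Theorem~\ref{Thm4} (whose proof is itself deferred to \cite{CFL}), whereas you carry out the Gidas--Spruck blow-up from scratch. The main genuine addition you bring is Step~1, the Green's-function comparison $u\geq t\psi$ giving $t\leq C(\Omega)\|u\|_{L^\infty}$; this cleanly forces the rescaled source term $\epsilon_k=t_kM_k^{-p}\leq C M_k^{1-p}\to0$ without any assumption on the size of $t_k$, and makes the $t$-uniformity of the final constant transparent. Your preliminary remark that ``one cannot simply invoke Theorem \ref{Thm4}'' is arguably overstated: as stated, the constant in Theorem~\ref{Thm4} depends only on $\Omega,n,m,p,h,k$, and for $f(x,s)=s^p+t$ the limit function $h\equiv1$ is $t$-independent, so the deduction the paper makes is formally legitimate. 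That said, whether the proof behind Theorem~\ref{Thm4} really yields a constant depending on $f$ only through $h,k$ and not through the rate of convergence in \eqref{0-3} is exactly the kind of point your Step~1 settles unambiguously for this particular family, so your caution is productive rather than misplaced. One small point worth flagging in Step~3: with $\partial\Omega$ only $C^{2m-2}$, boundary Schauder/$L^p$ regularity for $(-\Delta)^m$ is delicate; you acknowledge this, and in the blow-up regime the boundary flattens so the issue is manageable, but this is a place where the paper's own regularity hypothesis is already borderline and deserves the care you indicate.
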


\begin{rem}\label{remark5}
In Theorem 1.6 in Dai, Peng and Qin \cite{DPQ}, using the method of moving planes in local way and blow-up arguments, the authors have derived the a priori estimates for the higher order Navier problem \eqref{tNavier} under the assumptions that either $1<p<\frac{n+2m}{n-2m}$ and $\Omega$ is strictly convex, or $p\in\big(1,\frac{n+2}{n-2}\big]\bigcap\left(1,\frac{n+2m}{n-2m}\right)$. One can easily observe that Corollary \ref{cor1} also extends Theorem 1.6 in \cite{DPQ}.
\end{rem}

As a consequence of the a priori estimates (Corollary \ref{cor1}), by applying the Leray-Schauder fixed point theorem (see Theorem 4.1 in \cite{DPQ}), we can derive existence result for positive solution to the following Navier problem for higher order Lane-Emden equations
\begin{equation}\label{Navier}\\\begin{cases}
(-\Delta)^{m}u(x)=u^{p}(x) \,\,\,\,\,\,\,\,\,\, \text{in} \,\,\, \Omega, \\
u(x)=-\Delta u(x)=\cdots=(-\Delta)^{m-1}u(x)=0 \,\,\,\,\,\,\,\, \text{on} \,\,\, \partial\Omega,
\end{cases}\end{equation}
where $n\geq3$, $1\leq m<\frac{n}{2}$ and $\Omega\subset\mathbb{R}^{n}$ is a bounded domain with $C^{2m-2}$ boundary $\partial\Omega$.

By virtue of the a priori estimate (Theorem 6 in \cite{CFL} and Theorem 1.6 in \cite{DPQ}), using the Leray-Schauder fixed point theorem, Dai, Peng and Qin \cite{DPQ} obtained existence of positive solution for the Navier problem \eqref{Navier} in the cases that $p\in(1,\frac{n+2m}{n-2m})$ and $\Omega$ is strictly convex, or $p\in\left(1,\frac{n+2}{n-2}\right]\bigcap\left(1,\frac{n+2m}{n-2m}\right)\bigcup\left(\frac{n}{n-2m},\frac{n+2m}{n-2m}\right)$ (Theorem 1.7 in \cite{DPQ}). For other existence results on second order or critical order H\'{e}non-Hardy equations on bounded domains, please also see \cite{CDQ,CPY,GGN,N} and the references therein. Since Theorem \ref{Thm4} and Corollary \ref{cor1} extend Theorem 6 in \cite{CFL} and Theorem 1.6 in \cite{DPQ} from $p\in\left(1,\frac{n+2}{n-2}\right]\bigcap\left(1,\frac{n+2m}{n-2m}\right)\bigcup\left(\frac{n}{n-2m},\frac{n+2m}{n-2m}\right)$ to the full range $1<p<\frac{n+2m}{n-2m}$, through entirely similar arguments, we can improve Theorem 1.7 in \cite{DPQ} remarkably and derive the following existence result for positive solution to the Navier problem \eqref{Navier} in the full range $1<p<\frac{n+2m}{n-2m}$.
\begin{thm}\label{Thm5}
Assume $1<p<\frac{n+2m}{n-2m}$. Then, the higher order Navier problem \eqref{Navier} possesses at least one positive solution $u\in C^{2m}(\Omega)\cap C^{2m-2}(\overline{\Omega})$. Moreover, the positive solution $u$ satisfies
\begin{equation}\label{lower-bound}
  \|u\|_{L^{\infty}(\overline{\Omega})}\geq\left(\frac{\sqrt{2n}}{diam\,\Omega}\right)^{\frac{2m}{p-1}}.
\end{equation}
\end{thm}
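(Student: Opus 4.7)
The plan is to mirror the proof of Theorem 1.7 in \cite{DPQ} and apply the Leray-Schauder fixed point theorem (specifically Theorem 4.1 of \cite{DPQ}) to the Navier problem \eqref{Navier}, with the new Corollary \ref{cor1} replacing the more restrictive a priori bound used there; this substitution is precisely what lets the argument go through on the entire subcritical range $1<p<\frac{n+2m}{n-2m}$.

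Concretely, I would introduce the compact solution map $T:C(\overline\Omega)\to C(\overline\Omega)$ by sending $u$ to the unique $v\in C^{2m}(\Omega)\cap C^{2m-2}(\overline\Omega)$ solving the linear Navier problem $(-\Delta)^m v=(u_+)^p$ in $\Omega$ with $v=-\Delta v=\cdots=(-\Delta)^{m-1}v=0$ on $\partial\Omega$. By standard elliptic regularity and Arzel\`a--Ascoli, $T$ is compact; by the iterated maximum principle, it preserves the cone $K\subset C(\overline\Omega)$ of nonnegative functions, and its fixed points in $K$ are exactly the nonnegative classical solutions of \eqref{Navier}. Following \cite{DPQ}, I would deform through the family $u=Tu+s\varphi_{1}$, with $\varphi_{1}>0$ the first Navier eigenfunction of $(-\Delta)^m$ on $\Omega$ corresponding to the eigenvalue $\lambda_{1}>0$, and $s\geq 0$ a parameter. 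Testing the resulting equation $(-\Delta)^m u=u^p+\lambda_{1}s\,\varphi_{1}$ against $\varphi_{1}$ and using the elementary inequality $u^p\geq\lambda_{1}u-C(\lambda_{1},p)$ for $u\geq 0$ forces $s\leq s_{0}$ for some explicit $s_{0}=s_{0}(\Omega,n,m,p)$, so no nonnegative fixed point can exist for $s>s_{0}$. Combined with the uniform a priori bound for $s\in[0,s_{0}]$ (obtained by a blow-up reduction to Corollary \ref{cor1}), and with the local fixed-point index $+1$ at the origin -- valid because $p>1$ forces $\|Tu\|=o(\|u\|)$ as $\|u\|\to 0$ -- homotopy invariance of the fixed-point index in $K$ yields a nontrivial nonnegative fixed point. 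The strong maximum principle applied iteratively along the cascade $-\Delta u,\ldots,(-\Delta)^{m-1}u$ then promotes this fixed point to a strictly positive solution in $\Omega$.

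For the lower bound \eqref{lower-bound}, I would use the Green's representation $u(x)=\int_{\Omega}G^{N}_{m}(x,y)u^{p}(y)\,dy$, where $G^{N}_{m}$ denotes the Navier Green's function for $(-\Delta)^m$ on $\Omega$. Writing $M:=\|u\|_{L^\infty(\overline\Omega)}$ and evaluating at a maximum point $x_{0}\in\Omega$ gives the scalar inequality $M\leq M^{p}\,\Phi(x_{0})$, where $\Phi$ solves $(-\Delta)^m\Phi=1$ in $\Omega$ with Navier boundary conditions. An $m$-fold iteration of the elementary torsion estimate for $-\Delta$ on a ball -- the Dirichlet torsion on $B_{R}$ is $(R^{2}-|x|^{2})/(2n)$, with maximum $R^{2}/(2n)$ -- together with the inclusion $\Omega\subset B_{d}(x)$ for every $x\in\Omega$, where $d=\mathrm{diam}\,\Omega$, yields $\|\Phi\|_{L^\infty(\overline\Omega)}\leq d^{2m}/(2n)^{m}$. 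Rearranging $M^{p-1}\geq 1/\Phi(x_{0})$ then produces exactly $M\geq\bigl(\sqrt{2n}/d\bigr)^{2m/(p-1)}$.

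The main obstacle has in effect already been absorbed into Corollary \ref{cor1}: once the uniform a priori estimate is available on the full subcritical range, every remaining step -- compactness of $T$, homotopy invariance of the index, nonexistence for $s$ large via the eigenfunction test, and the positivity and explicit lower bound for the resulting fixed point -- is classical and follows \cite{DPQ} with only cosmetic modifications. The substantive content of Theorem \ref{Thm5} therefore lies upstream, in the chain Theorem \ref{Thm3'} $\Rightarrow$ Theorem \ref{Thm4} $\Rightarrow$ Corollary \ref{cor1}, which is where the subcritical Liouville theory developed via the method of scaling spheres actually does its work.
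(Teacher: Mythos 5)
Your proposal is essentially the paper's intended argument: the paper reduces Theorem \ref{Thm5} to Theorem 1.7 of \cite{DPQ} with the a priori estimate upgraded to Corollary \ref{cor1}, which is exactly the Leray--Schauder/fixed-point-index scheme you describe, and the iterated torsion-function comparison $\|\Phi\|_{L^\infty}\leq d^{2m}/(2n)^m$ followed by $M\leq M^p\Phi(x_0)$ is precisely how one obtains the explicit lower bound \eqref{lower-bound}. One small discrepancy worth noting: the deformation the paper and \cite{DPQ} have in mind is $(-\Delta)^m u=u^p+t$ with a \emph{constant} $t\geq0$ (this is what problem \eqref{tNavier} and Corollary \ref{cor1} are tailored for), whereas your homotopy $u=Tu+s\varphi_1$ introduces the $x$-dependent source $s\lambda_1\varphi_1(x)$; that works equally well, but the a priori bound along the family should then be cited from Theorem \ref{Thm4} (taking $f(x,u)=u^p+s\lambda_1\varphi_1(x)$, which satisfies \eqref{0-3}) rather than from Corollary \ref{cor1}.
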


\begin{rem}\label{remark4}
The proof of Theorem \ref{Thm5} is entirely similar to that of Theorem 1.7 in \cite{DPQ}. We only need to replace Theorem 6 in \cite{CFL} and Theorem 1.6 in \cite{DPQ} by Corollary \ref{cor1} in the proof. Thus we omit the details of the proof.
\end{rem}

\begin{rem}\label{remark7}
The lower bounds \eqref{lower-bound} on the $L^{\infty}$ norm of positive solutions $u$ indicate that, if $diam\,\Omega<\sqrt{2n}$, then a uniform priori estimate does not exist and blow-up may occur when $p\rightarrow1+$.
\end{rem}

\begin{rem}\label{rem11}
For literature on Dirichlet or Neumann problems involving higher order elliptic operators, poly-harmonic boundary value problems and higher order equations on compact Riemannian manifolds arising from conformal geometry, please refer to Barton, Hofmann and Mayboroda \cite{BHM1,BHM2}, Felli, Hebey and Robert \cite{FHR}, Hebey, Robert and Wen \cite{HRW}, Mayboroda and Maz$'$ya \cite{MM1,MM2} and the references therein.
\end{rem}

\subsection{Liouville theorems in bounded domains}

By applying the \emph{method of scaling spheres (in local way)}, we also study the following fractional and higher order super-critical problems in balls $B_{R}(0)$ with arbitrary $R>0$, that is, the Dirichlet problems
\begin{equation}\label{Dball}\\\begin{cases}
(-\Delta)^{\frac{\alpha}{2}}u(x)=|x|^{a}u^{p}(x), \,\,\,\,\,\,\, u(x)\geq0 \,\,\,\,\,\,\, \text{in} \,\,\, B_{R}(0), \\
u(x)=0 \,\,\,\,\,\,\,\, \text{in} \,\,\, \mathbb{R}^{n}\setminus B_{R}(0)
\end{cases}\end{equation}
with $0<\alpha<2$, and the Navier problems
\begin{equation}\label{Nball}\\\begin{cases}
(-\Delta)^{\frac{\alpha}{2}}u(x)=|x|^{a}u^{p}(x), \,\,\,\,\,\,\, u(x)\geq0 \,\,\,\,\,\,\, \text{in} \,\,\, B_{R}(0), \\
u(x)=(-\Delta)u(x)=\cdots=(-\Delta)^{\frac{\alpha}{2}-1}u(x)=0 \,\,\,\,\,\,\,\, \text{on} \,\,\, \partial B_{R}(0)
\end{cases}\end{equation}
with $\alpha=2m$ and $1\leq m<\frac{n}{2}$, where $n>\alpha$, $-\alpha<a<+\infty$ and $p_{c}(a):=\frac{n+\alpha+2a}{n-\alpha}<p<+\infty$. For $0<\alpha<2$, we assume $u\in C^{1,1}_{loc}(B_{R})\cap C(\overline{B_{R}})$ if $0\leq a<+\infty$, and $u\in C^{1,1}_{loc}(B_{R}\setminus\{0\})\cap C(\overline{B_{R}})$ if $-\alpha<a<0$. For $\alpha=2m$ with $1\leq m<\frac{n}{2}$, we assume $u\in C^{2m}(B_{R})\cap C^{2m-2}(\overline{B_{R}})$ if $0\leq a<+\infty$, and $u\in C^{2m}(B_{R}\setminus\{0\})\cap C^{2m-2}(\overline{B_{R}}\setminus\{0\})\cap C(\overline{B_{R}})$ if $-\alpha<a<0$.

Our Liouville type result for super-critical problems \eqref{Dball} and \eqref{Nball} is the following theorem.

\begin{thm}\label{ball}
Assume $n>\alpha$, $0<\alpha<2$ or $\alpha=2m$ with $1\leq m<\frac{n}{2}$, $-\alpha<a<+\infty$ and $\frac{n+\alpha+2a}{n-\alpha}<p<+\infty$. Suppose $u$ is a nonnegative solution to super-critical problems \eqref{Dball} or \eqref{Nball}, then $u\equiv0$ in $\overline{B_{R}(0)}$.
\end{thm}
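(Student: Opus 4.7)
The plan is to apply the \emph{method of scaling spheres in local way} centered at the origin, followed by a bootstrap on the asymptotic rate of $u$ near $0$, to reach a contradiction with the local integrability of $|y|^{a}u^{p}(y)$. The argument is parallel for \eqref{Dball} and \eqref{Nball}; I describe the fractional case $0<\alpha<2$, the higher-order Navier case being analogous upon replacing the Green's function below by $G^{m}_{R}$ of $(-\Delta)^{m}$ on $B_{R}$ under Navier conditions. Assume for contradiction that $u\not\equiv 0$; the strong maximum principle then yields $u>0$ throughout $B_{R}$.

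First, I would represent $u$ by the Green's function $G^{\alpha}_{R}$ of $(-\Delta)^{\alpha/2}$ on $B_{R}$ with Dirichlet exterior data, $u(x)=\int_{B_{R}}G^{\alpha}_{R}(x,y)|y|^{a}u^{p}(y)\,dy$. Next, for $\lambda\in(0,R)$, introduce the Kelvin transform
\begin{equation*}
u_{\lambda}(x):=\Big(\frac{\lambda}{|x|}\Big)^{n-\alpha}u\Big(\frac{\lambda^{2}x}{|x|^{2}}\Big),
\end{equation*}
and verify that, on $\{|x|>\lambda^{2}/R\}$,
\begin{equation*}
(-\Delta)^{\alpha/2}u_{\lambda}(x)=\lambda^{-\theta}|x|^{b}\,u_{\lambda}^{p}(x),\quad b:=p(n-\alpha)-n-\alpha-a,\quad \theta:=(p-1)(n-\alpha)-2(\alpha+a).
\end{equation*}
The super-critical hypothesis $p>p_{c}(a)$ is precisely $\theta>0$, equivalently $b-a=\theta>0$; then $\lambda^{-\theta}|x|^{b}=|x|^{a}(|x|/\lambda)^{\theta}<|x|^{a}$ whenever $|x|<\lambda$. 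Since Kelvin inversion sends $B_{R}$ to $\mathbb{R}^{n}\setminus\overline{B_{\lambda^{2}/R}}$, on the annulus $A_{\lambda}:=B_{\lambda}\setminus\overline{B_{\lambda^{2}/R}}$ we have $u_{\lambda}=u$ on $\partial B_{\lambda}$ and $u_{\lambda}\equiv 0\leq u$ on $\partial B_{\lambda^{2}/R}$.

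The scaling-spheres step is to establish $w_{\lambda}:=u-u_{\lambda}\geq 0$ on $A_{\lambda}$ for every $\lambda\in(0,R)$. For $\lambda$ close to $R$ the annulus $A_{\lambda}$ is thin, and a narrow-region principle (applied to the linearized integral inequality for $w_{\lambda}$, with correct sign supplied by $\lambda^{-\theta}|x|^{b}<|x|^{a}$) gives $w_{\lambda}\geq 0$. Decreasing $\lambda$, a continuity-and-contradiction argument driven by the strict gap $\theta>0$ propagates the inequality down to $\lambda\to 0^{+}$. Once $u\geq u_{\lambda}$ on $A_{\lambda}$ for every $\lambda$, then for any $x$ with $|x|<R/2$ and $\omega=x/|x|$, the choice $\lambda=\sqrt{R|x|/2}$ places $x\in A_{\lambda}$ and yields
\begin{equation*}
u(x)\geq\Big(\frac{R}{2|x|}\Big)^{(n-\alpha)/2}u\!\left(\tfrac{R}{2}\omega\right)\geq C_{0}\,|x|^{-(n-\alpha)/2},
\end{equation*}
using $\inf_{|y|=R/2}u(y)>0$. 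Crucially $(n-\alpha)/2>\sigma^{*}:=(\alpha+a)/(p-1)$, again equivalent to $p>p_{c}(a)$.

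Finally, I bootstrap. Plugging $u(y)\geq C_{k}|y|^{-\sigma_{k}}$ into the representation and estimating $\int_{B_{R}}G^{\alpha}_{R}(x,y)|y|^{a-p\sigma_{k}}dy$ near $x=0$ improves the bound to $u(x)\geq C_{k+1}|x|^{-\sigma_{k+1}}$ with $\sigma_{k+1}=p\sigma_{k}-(\alpha+a)$, valid as long as $\sigma_{k}<(n+a)/p$. The recursion $\sigma_{k+1}-\sigma^{*}=p(\sigma_{k}-\sigma^{*})$, with $\sigma_{0}-\sigma^{*}>0$ and $p>1$, forces $\sigma_{k}\to+\infty$; at the first $k$ for which $\sigma_{k}\geq(n+a)/p$, the integrand $|y|^{a}u^{p}(y)\gtrsim|y|^{a-p\sigma_{k}}$ fails to be integrable near $0$, contradicting the representation formula. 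Hence $u\equiv 0$. \emph{The main obstacle is the scaling-spheres step}: unlike the whole-space case, $A_{\lambda}$ has two boundary components on which $w_{\lambda}$ behaves differently, and $u_{\lambda}$ satisfies a super-critical \emph{companion} equation rather than the original one; one must design an integral narrow-region/maximum principle adapted to $A_{\lambda}$ and use the strict gap $\theta>0$ quantitatively to close the continuity argument. For $-\alpha<a<0$, the possible singularity of $u$ at the origin requires an additional cutoff near $0$ to justify the manipulations involving $u_{\lambda}$.
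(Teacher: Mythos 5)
Your proposal matches the paper's proof in both strategy and essential detail: the Green's function representation on $B_{R}$, the Kelvin transform centered at $0$ with the super‑critical sign condition $\tau=n+\alpha+2a-p(n-\alpha)<0$ (your $\theta=-\tau>0$) providing the correct inequality on the annulus $B_{\lambda}\setminus\overline{B_{\lambda^{2}/R}}$, the narrow‑region starting point near $\lambda=R$, the continuity argument shrinking $\lambda$ down to $0$, the resulting bound $u(x)\gtrsim|x|^{-(n-\alpha)/2}$, and the bootstrap recursion $\sigma_{k+1}=p\sigma_{k}-(\alpha+a)$ with unstable fixed point $\sigma^{*}=(\alpha+a)/(p-1)<\sigma_{0}$ forcing $\sigma_{k}\to\infty$. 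The paper implements your ``integral narrow‑region/maximum principle adapted to $A_{\lambda}$'' via four pointwise properties of the Green's function (their Lemma 6.2) combined with the Hardy--Littlewood--Sobolev inequality, exactly as you anticipated as the main obstacle; the final contradiction there is phrased as the unbounded singularity of $u$ at $0$ being incompatible with $u\in C(\overline{B_{R}})$ and the integral representation, which is equivalent to your integrability criterion $\sigma_{k}\geq(n+a)/p$.
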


Our theorem seems to be the first result on Liouville properties for fractional and higher order super-critical problems in bounded domains.

\begin{rem}\label{remball-1}
Theorem \ref{Thm5} and Theorem \ref{ball} indicate that the exponent $p_{c}(a):=\frac{n+\alpha+2a}{n-\alpha}$ is optimal for existence of positive solutions to H\'{e}non-Hardy equations in bounded domains.
\end{rem}

\begin{rem}\label{remball-2}
We will prove Theorem \ref{ball} in fractional and higher order cases via a unified approach - \emph{the method of scaling spheres in local way}, that is, shrinking the spheres up to the origin $0\in\mathbb{R}^{n}$. \emph{The method of scaling spheres in local way}, in conjunction with the integral representation formulae of solutions and a ``Bootstrap" iteration process, will provide useful lower bound estimates for the asymptotic behaviour of positive solutions as $|x|\rightarrow0$ (see Theorem \ref{lower4}), which will lead to a contradiction on integrability of solutions indicated by the integral representation formulae near the origin unless the solution $u\equiv0$ (see Section 6).
\end{rem}

\begin{rem}\label{remball-3}
We would also like to mention other applications of \emph{the method of scaling spheres}. For instance, it is an interesting open problem to extend the Liouville theorems, a priori estimates and existence of solutions for non-critical higher order H\'{e}non-Hardy equations (Theorems \ref{Thm3}, \ref{Thm3'}, \ref{Thm4}, \ref{Thm5}, \ref{ball} and Corollary \ref{cor1}) to critical order H\'{e}non-Hardy equations with Navier or Dirichlet boundary conditions on $\mathbb{R}^{n}_{+}$ or bounded domains $\Omega$. \emph{The method of scaling spheres} can also be applied to solve various problems on systems of PDEs or IEs.
\end{rem}

\subsection{Extensions to general nonlinearities}

Consider general fractional or second order elliptic equations of the following forms:
\begin{equation}\label{ePDE}
  (-\Delta)^{\frac{\alpha}{2}}u(x)=f(x,u), \quad\, u(x)\geq0, \quad\, x\in\Omega:=\mathbb{R}^{n}, \, \mathbb{R}^{n}_{+} \,\, \text{or} \,\, B_{R}(0),
\end{equation}
where $n>\alpha$, $0<\alpha\leq2$ or $\alpha=2m$ with $2\leq m<\frac{n}{2}$, and the nonlinear terms $f:\, (\Omega\setminus\{0\})\times\overline{\mathbb{R}_{+}}\rightarrow \overline{\mathbb{R}_{+}}$.

\begin{defn}\label{defn1}
We say that the nonlinear term $f$ has subcritical (supercritical) growth provided that
\begin{equation}\label{e1}
  \mu^{\frac{n+\alpha}{n-\alpha}}f(\mu^{\frac{2}{n-\alpha}}x,\mu^{-1}u)
\end{equation}
is strictly increasing (decreasing) with respect to $\mu\geq1$ or $\mu\leq1$ for all $(x,u)\in(\Omega\setminus\{0\})\times\mathbb{R}_{+}$.
\end{defn}
\begin{defn}\label{defn2}
A function $g(x,u)$ is called locally Lipschitz on $u$ in $\Omega\times\overline{\mathbb{R}_{+}}$ ($\Omega\times\mathbb{R}_{+}$), provided that for any $u_{0}\in\overline{\mathbb{R}_{+}}$ ($u_{0}\in\mathbb{R}_{+}$) and $\omega\subseteq\Omega$ bounded, there exists a (relatively) open neighborhood $U(u_{0})\subset\overline{\mathbb{R}_{+}}$ ($U(u_{0})\subset\mathbb{R}_{+}$) such that $g$ is Lipschitz continuous on $u$ in $\omega\times U(u_{0})$.
\end{defn}

We need the following three assumptions on the nonlinear term $f(x,u)$. \\
$(\mathbf{f_{1}})$ The nonlinear term $f$ is non-decreasing about $u$ in $(\Omega\setminus\{0\})\times\overline{\mathbb{R}_{+}}$, namely,
\begin{equation}\label{e2}
  (x,u), \, (x,v)\in(\Omega\setminus\{0\})\times\overline{\mathbb{R}_{+}} \,\,\, \text{with} \,\,\, u\leq v \,\,\, \text{implies} \,\,\, f(x,u)\leq f(x,v).
\end{equation}
$(\mathbf{f_{2}})$ There exists a $\sigma<\alpha$ such that, $|x|^{\sigma}f(x,u)$ is locally Lipschitz on $u$ in $\Omega\times\mathbb{R}_{+}$. If $\Omega=\mathbb{R}^{n}_{+}$ or $B_{R}(0)$, we require $|x|^{\sigma}f(x,u)$ is locally Lipschitz on $u$ in $\Omega\times\overline{\mathbb{R}_{+}}$. \\
$(\mathbf{f_{3}})$ There exist a cone $\mathcal{C}\subset\Omega$ with vertex at $0$, constants $C>0$, $-\alpha<a<+\infty$, $0<p<p_{c}(a)$ if $f$ is subcritical or $p_{c}(a)<p<+\infty$ if $f$ is supercritical such that, the nonlinear term
\begin{equation}\label{e3}
  f(x,u)\geq C|x|^{a}u^{p}
\end{equation}
in $\mathcal{C}\times\overline{\mathbb{R}_{+}}$. If $\Omega=\mathbb{R}^{n}_{+}$, we require the cone $\mathcal{C}$ contains the positive $x_{n}$-axis, say, $\mathcal{C}=\{x\in\mathbb{R}^{n}_{+}\,|\,x_{n}>\frac{|x|}{\sqrt{n}}\}$. In addition, if $\alpha=2m$ with $2\leq m<\frac{n}{2}$ and $\Omega=\mathbb{R}^{n}$, we assume there exist constants $C>0$, $-2m<a<+\infty$ and $0<p<p_{c}(a)$ such that \eqref{e3} holds on $(\Omega\setminus\{0\})\times\overline{\mathbb{R}_{+}}$.

\smallskip

By applying \emph{the method of scaling spheres} to the generalized equations \eqref{ePDE}, we can derive the following Liouville theorem.
\begin{thm}\label{general}
Assume $\Omega=\mathbb{R}^{n},\mathbb{R}^{n}_{+}$ or $B_{R}(0)$ if $0<\alpha\leq2$ and $\alpha<n$, and $\Omega=\mathbb{R}^{n}$ or $B_{R}(0)$ if $\alpha=2m$ with $2\leq m<\frac{n}{2}$. Assume further $\sigma<1$ if $\Omega=\mathbb{R}^{n}_{+}$ and $\alpha=2$. Suppose $f$ is subcritical if $\Omega=\mathbb{R}^{n}$ or $\mathbb{R}^{n}_{+}$, $f$ is supercritical if $\Omega=B_{R}(0)$, and $f$ satisfies the assumptions $(\mathbf{f_{1}})$, $(\mathbf{f_{2}})$ and $(\mathbf{f_{3}})$, then the Liouville type results in Theorems \ref{Thm0}, \ref{Thm1'}, \ref{Thm2} and \ref{ball} are valid for equations \eqref{ePDE}.
\end{thm}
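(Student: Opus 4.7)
The plan is to run the method of scaling spheres essentially as in Theorems \ref{Thm0}, \ref{Thm1'}, \ref{Thm2} and \ref{ball}, replacing each place where the pure power structure $|x|^{a}u^{p}$ was used by an argument that invokes only the hypotheses $(\mathbf{f_{1}})$--$(\mathbf{f_{3}})$. The starting point is the integral representation
\begin{equation*}
u(x)=\int_{\Omega}G_{\Omega}(x,y)\,f(y,u(y))\,dy,
\end{equation*}
where $G_{\Omega}$ is the Green's function of $(-\Delta)^{\alpha/2}$ on $\Omega$ with the appropriate boundary condition. This formula is available on each admissible domain by the equivalence results used in the earlier theorems, and its right-hand side is nonnegative by $(\mathbf{f_{1}})$, $(\mathbf{f_{3}})$ and $u\geq 0$.

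Next, I carry out the scaling spheres procedure centered at $0$. For the Kelvin transform $u_{\lambda}(x):=(\lambda/|x|)^{n-\alpha}u(\lambda^{2}x/|x|^{2})$ one computes that $(-\Delta)^{\alpha/2}u_{\lambda}$ produces a nonlinearity obtained from $f$ by the substitution in \eqref{e1}. Definition \ref{defn1} therefore gives the pointwise comparison needed between the nonlinear terms of $u$ and $u_{\lambda}$: strict monotonicity in $\mu$ in the correct direction as the spheres are dilated (subcritical) or shrunk (supercritical). Combined with the monotonicity assumption $(\mathbf{f_{1}})$, which lets me compare $f(y,u_{\lambda}(y))$ with $f(y,u(y))$ on the level set $\{u_{\lambda}>u\}$, and with $(\mathbf{f_{2}})$, which supplies exactly the local Lipschitz regularity required by the narrow region principles (Theorems \ref{NRP1+} and the other MPs used in Sections 2--6), the scaling sphere argument runs verbatim: I first start the procedure for $\lambda$ small (resp.\ large), then continuously push $\lambda$ all the way to the limiting radius.

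From the limiting configuration of the spheres I extract, exactly as in Theorems \ref{lower0}--\ref{lower3} in the subcritical case and Theorem \ref{lower4} in the supercritical case, an initial decay/growth lower bound on $u$ of Kelvin type at infinity (resp.\ near $0$). At this point I invoke $(\mathbf{f_{3}})$ on the cone $\mathcal{C}$ to write
\begin{equation*}
u(x)\;\geq\;C\int_{\mathcal{C}}G_{\Omega}(x,y)\,|y|^{a}u^{p}(y)\,dy,
\end{equation*}
and run the bootstrap iteration exactly as in the pure-power proofs: the iteration generates successively larger lower bounds on $u$ which, by the strict subcriticality $p<p_{c}(a)$ (resp.\ strict supercriticality $p>p_{c}(a)$), eventually force a divergent integral on the right-hand side and contradict the local integrability of $f(\cdot,u(\cdot))$ guaranteed by the representation formula, unless $u\equiv 0$.

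The step I expect to be the main obstacle is verifying that the narrow region principle and the subsequent bootstrap really only feel the structural hypotheses on $f$, i.e.\ that replacing the exact scaling identity $\mu^{(n+\alpha)/(n-\alpha)}|x|^{a}u^{p}\mapsto \mu^{(n+\alpha)/(n-\alpha)}|\cdot|^{a}|\cdot|^{p}$ by the one-sided comparison of Definition \ref{defn1} does not break the monotonicity of $\lambda\mapsto w_{\lambda}$; and that the cone restriction in $(\mathbf{f_{3}})$ (including the requirement that $\mathcal{C}\subset\mathbb{R}^{n}_{+}$ contain the positive $x_{n}$-axis, needed so that the seed lower bound obtained after the scaling step actually intersects $\mathcal{C}$) suffices to feed the bootstrap. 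Once these two points are checked, the proofs of Theorems \ref{Thm0}, \ref{Thm1'}, \ref{Thm2} and \ref{ball} apply with only cosmetic changes and yield Theorem \ref{general}.
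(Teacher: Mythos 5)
Your proposal follows exactly the plan the paper itself sketches in Remark \ref{rem6}: obtain the integral representation, run the scaling spheres procedure with Definition \ref{defn1} replacing the explicit power-law scaling identity, use $(\mathbf{f_{1}})$ for the pointwise comparison of nonlinear terms on the deficiency set, $(\mathbf{f_{2}})$ to supply the Lipschitz bound $c_{\lambda}(x)\lesssim |x|^{-\sigma}$ entering the narrow region principle, and $(\mathbf{f_{3}})$ with the cone $\mathcal{C}$ to seed and feed the bootstrap; this is precisely the approach the paper takes (deferring the details to a later publication). One detail worth adding, which the paper flags explicitly: if the monotonicity in Definition \ref{defn1} is only assumed for $\mu\leq 1$ rather than $\mu\geq 1$, the comparison $(-\Delta)^{\alpha/2}u_{\lambda}>f(x,u_{\lambda})$ holds on $\mathbb{R}^{n}\setminus\overline{B_{\lambda}(0)}$ rather than on $B_{\lambda}(0)\setminus\{0\}$, so the scaling-sphere estimates must be carried out on the exterior region instead of the interior one.
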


\begin{rem}\label{rem6}
By using the assumptions $(\mathbf{f_{1}})$, $(\mathbf{f_{2}})$, $(\mathbf{f_{3}})$ and \emph{the method of scaling spheres}, Theorem \ref{general} can be proved through a quite similar way as in the proofs of Theorems \ref{Thm0}, \ref{Thm1'}, \ref{Thm2} and \ref{ball}, so we leave the details to readers. For references, see our later paper (JDE, 269 (2020), 7231-7252). We would like to mention that, if the nonlinear term $f(x,u)$ satisfies subcritical (supercritical) conditions for $\mu\leq1$ (see Definition \ref{defn1}), we only need to carry out calculations and estimates outside the ball $B_{\lambda}(0)$ during the scaling spheres procedure.
\end{rem}

\begin{rem}\label{rem7}
In particular, $f(x,u)=|x|^{a}u^{p}$ with $a>-\alpha$ satisfies all the assumptions in Theorem \ref{general}, thus Theorems \ref{Thm0}, \ref{Thm1'}, \ref{Thm2} and \ref{ball} can also be regarded as corollaries of Theorem \ref{general}. In addition, $f(x,u)=|x|^{a}|x_{n}|^{b}u^{p}$ with $a+b>-\alpha$ and $b\geq0$ also satisfies all the assumptions in Theorem \ref{general} except when $\alpha=2m$ with $2\leq m<\frac{n}{2}$ and $\Omega=\mathbb{R}^{n}$.
\end{rem}

Instead of assumption $(\mathbf{f_{2}})$, we may also make the following assumption on $f(x,u)$.

\smallskip

\noindent$(\mathbf{f_{2}}')$ There exist $\sigma$ and $\nu$ satisfying $\sigma+\nu<\alpha$ and $\nu<\frac{\alpha}{n}$ such that, $|x|^{\sigma}|x_{n}|^{\nu}f(x,u)$ is locally Lipschitz on $u$ in $\Omega\times\mathbb{R}_{+}$. If $\Omega=\mathbb{R}^{n}_{+}$ or $B_{R}(0)$, we require $|x|^{\sigma}|x_{n}|^{\nu}f(x,u)$ is locally Lipschitz on $u$ in $\Omega\times\overline{\mathbb{R}_{+}}$.

By applying \emph{the method of scaling spheres}, we can derive the following Liouville theorem for nonlinear term $f(x,u)$ satisfying $(\mathbf{f_{1}})$, $(\mathbf{f_{2}}')$ and $(\mathbf{f_{3}})$.
\begin{thm}\label{general'}
Assume $\sigma+\nu<1$ and $\nu<\frac{1}{n}$ if $\Omega=\mathbb{R}^{n}_{+}$ and $\alpha=2$. Suppose $f$ satisfies the assumptions $(\mathbf{f_{1}})$, $(\mathbf{f_{2}}')$ and $(\mathbf{f_{3}})$, then the Liouville type results in Theorem \ref{general} are valid for equations \eqref{ePDE} under the same conditions.
\end{thm}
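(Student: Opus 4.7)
The plan is to adapt the \emph{method of scaling spheres} exactly as used in the proofs of Theorems \ref{Thm0}, \ref{Thm1'}, \ref{Thm2} and \ref{ball} (and summarised under the stronger $(\mathbf{f_{2}})$ in Theorem \ref{general}) to the weaker hypothesis $(\mathbf{f_{2}}')$, and to verify that the two new quantitative constraints $\sigma+\nu<\alpha$ and $\nu<\alpha/n$ compensate exactly for the additional boundary weight $|x_{n}|^{-\nu}$ that now appears in the Lipschitz control of $f$. Since assumptions $(\mathbf{f_{1}})$ and $(\mathbf{f_{3}})$ are left unchanged, the structural ingredients of the method are preserved: monotonicity in $u$ propagates positivity under the spherical reflection step, while the pointwise lower bound $f(x,u)\geq C|x|^{a}u^{p}$ on the cone $\mathcal{C}$ is the input that drives the three-step Bootstrap iteration on the integral representation formula and ultimately contradicts the integrability of the right-hand side unless $u\equiv0$.

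The only step of the proof in which $(\mathbf{f_{2}})$ is genuinely invoked is the \emph{first key inequality} of the scaling spheres procedure, namely the integral narrow-region estimate comparing $u$ with its scaled reflection $u_{\lambda}$. Under $(\mathbf{f_{2}}')$ this comparison yields, for $u$ and $u_{\lambda}$ in a common compact range,
\begin{equation*}
|f(x,u(x))-f(x,u_{\lambda}(x))|\leq C\,|x|^{-\sigma}|x_{n}|^{-\nu}\,|u(x)-u_{\lambda}(x)|.
\end{equation*}
I would therefore revisit the narrow-region principle on thin spherical shells with the anisotropic weight $|x|^{-\sigma}|x_{n}|^{-\nu}$. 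The condition $\nu<\alpha/n$ is precisely what guarantees $|x_{n}|^{-\nu}\in L^{n/\alpha}_{loc}(\mathbb{R}^{n})$, which is the critical Lebesgue scale for the fractional or polyharmonic Riesz potential, so that by H\"older's inequality the boundary weight can be absorbed into the $L^{n/\alpha}$ norm already controlling $u-u_{\lambda}$ on a narrow shell; the joint condition $\sigma+\nu<\alpha$ is then a Hardy-type criticality bound ensuring that the resulting weighted operator norm on a sufficiently thin shell is strictly less than one. This allows the positivity $u\geq u_{\lambda}$ to be initiated at the first critical radius and propagated during the continuous scaling step in all three ambient settings $\Omega=\mathbb{R}^{n}$, $\mathbb{R}^{n}_{+}$ and $B_{R}(0)$, with the obvious substitution of the half-space Green function in place of the Riesz kernel when $\Omega=\mathbb{R}^{n}_{+}$.

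With the anisotropic narrow-region estimate in hand, the remaining arguments are a transcription of those for Theorem \ref{general}: the scaling spheres procedure (or its local-way variant when $\Omega=B_{R}(0)$) produces an initial lower bound on $u$ on the cone $\mathcal{C}$, the Bootstrap iteration on the Riesz or half-space Green representation improves this lower bound successively using only $(\mathbf{f_{3}})$, and the improved lower bound eventually contradicts the integrability of the right-hand side of the representation formula, forcing $u\equiv0$. The main obstacle is therefore the single anisotropic weighted integral estimate on narrow spherical shells, and in particular the verification that the two constraints $\sigma+\nu<\alpha$ and $\nu<\alpha/n$ are sharp for closing it; everything downstream is parallel to the corresponding step already outlined for Theorem \ref{general}, so the Liouville conclusions in Theorems \ref{Thm0}, \ref{Thm1'}, \ref{Thm2} and \ref{ball} transfer to \eqref{ePDE} under the weaker hypothesis $(\mathbf{f_{2}}')$.
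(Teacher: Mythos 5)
Your proposal is correct and aligns with the paper's own treatment, which in fact gives no detailed proof of Theorem \ref{general'} at all: Remark \ref{rem10} only says the argument is ``quite similar'' to the proofs of Theorems \ref{Thm0}, \ref{Thm1'}, \ref{Thm2} and \ref{ball}, with the added hint that for $\Omega=\mathbb{R}^{n}$ one should use the \emph{integral-forms} version of scaling spheres rather than the direct version. You correctly isolate the only place where $(\mathbf{f_{2}}')$ rather than $(\mathbf{f_{2}})$ actually matters, namely the narrow-region closure, and you correctly tie the two numerical constraints to the $L^{n/\alpha}$ integrability needed there.

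Two clarifications are worth recording. First, the precise statement is that $\nu<\alpha/n$ and $\sigma+\nu<\alpha$ together are exactly equivalent to $|x|^{-\sigma}|x_{n}|^{-\nu}\in L^{n/\alpha}_{loc}(\mathbb{R}^{n})$: writing $x=r\omega$ with $\omega\in S^{n-1}$, the angular integral $\int_{S^{n-1}}|\omega_{n}|^{-\nu n/\alpha}\,d\omega$ converges iff $\nu<\alpha/n$, and the radial integral $\int_{0}^{1}r^{\,n-1-(\sigma+\nu)n/\alpha}\,dr$ converges iff $\sigma+\nu<\alpha$. Your heuristic splitting of roles (one condition for the boundary weight, one as a Hardy bound) lands on the right answer, but it is this joint membership that does the work; once it holds, smallness of $\||x|^{-\sigma}|x_{n}|^{-\nu}\cdot L\|_{L^{n/\alpha}}$ on narrow shells follows from absolute continuity of the Lebesgue integral, which is what closes the HLS/H\"older iteration as in \eqref{3-14}--\eqref{3-16}. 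Second, your opening phrase ``revisit the narrow-region principle on thin spherical shells'' could be misread as an intent to rerun the pointwise narrow-region principle (Theorem \ref{NRP1}); that version would \emph{not} close for $\Omega=\mathbb{R}^{n}$ under $(\mathbf{f_{2}}')$, because at the negative-minimum point $\tilde{x}$ the factor $|\tilde{x}_{n}|^{-\nu}$ in $c_{\lambda}(\tilde{x})$ is uncontrolled, so the comparison $C/l^{\alpha}\leq c_{\lambda}(\tilde{x})$ no longer yields a contradiction. This is precisely why Remark \ref{rem10} recommends the integral-forms version. Your subsequent H\"older/$L^{n/\alpha}$ discussion shows you already intend the integral-forms route, so this is only a phrasing issue, not a gap; the rest of your plan (monotonicity from $(\mathbf{f_{1}})$ for the reflection comparison, the cone lower bound from $(\mathbf{f_{3}})$ to drive the bootstrap, and the terminal contradiction with the integrability of the representation formula) transcribes faithfully.
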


\begin{rem}\label{rem10}
By using the assumptions $(\mathbf{f_{1}})$, $(\mathbf{f_{2}}')$, $(\mathbf{f_{3}})$ and \emph{the method of scaling spheres}, Theorem \ref{general'} can be proved through a quite similar way as in the proofs of Theorems \ref{Thm0}, \ref{Thm1'}, \ref{Thm2} and \ref{ball}, so we leave the details to interested readers. We would like to mention that, if $\Omega=\mathbb{R}^{n}$, $0<\alpha\leq n$ and $\alpha<n$, it is more convenient to prove the Liouville type results in Theorem \ref{general'} via applying the \emph{method of scaling spheres in integral forms} (see proofs of Theorems \ref{Thm1} and \ref{Thm3}) instead of the \emph{direct method of scaling spheres} used in the proof of Theorem \ref{Thm0}.
\end{rem}

\begin{rem}\label{rem9}
When $\alpha=2$ and $\Omega=\mathbb{R}^{n}_{+}$, the additional restrictions $a>-1$ in Theorem \ref{Thm2}, $\sigma<1$ in Theorem \ref{general}, $\sigma+\nu<1$ and $\nu<\frac{1}{n}$ in Theorem \ref{general'} come from the \emph{Narrow region principle} Theorem \ref{NRP1+}. By using the equivalence between PDE \eqref{PDE+} and IE \eqref{IE+} in Theorem \ref{equivalent+} and the \emph{method of scaling spheres in integral forms} in our later paper (Theorem 1.3, Nonlinear Analysis, 183 (2019), 284-302), these additional assumptions can be removed when $\alpha=2$ and $\Omega=\mathbb{R}^{n}_{+}$, and the Liouville type results in Theorems \ref{Thm2}, \ref{general} and \ref{general'} actually hold for any $a>-2$, $\sigma<2$, $\sigma+\nu<2$ and $\nu<\frac{2}{n}$ respectively. This also indicates an evident difference between the \emph{direct method of scaling spheres} and the \emph{method of scaling spheres in integral forms}.
\end{rem}

\begin{rem}\label{rem7'}
In particular, $f(x,u)=|x|^{a}|x_{n}|^{b}u^{p}$ with $a+b>-\alpha$ and $b>-\frac{\alpha}{n}$ satisfies all the assumptions in Theorem \ref{general'} except when $\alpha=2m$ with $2\leq m<\frac{n}{2}$ and $\Omega=\mathbb{R}^{n}$.
\end{rem}

\begin{rem}\label{rem8}
Assume $\alpha=2m$ with $1\leq m<\frac{n}{2}$. Suppose $f(x,u)$ is subcritical and satisfies assumptions $(\mathbf{f_{1}})$, $(\mathbf{f_{2}})$ (or $(\mathbf{f_{2}}')$). If there exists a cone $\mathcal{C}\subset\mathbb{R}^{n}$ with vertex at $0$ such that, $f(x,u)$ satisfies the inequality \eqref{e3} in assumption $(\mathbf{f_{3}})$ in $\mathcal{C}\times\overline{\mathbb{R}_{+}}$, then one can prove in a similar way that the Liouville type results in Theorem \ref{Thm1} are valid for the following generalized integral equations:
\begin{equation}\label{IE1-g}
  u(x)=\int_{\mathbb{R}^{n}}\frac{C_{n,m}}{|x-y|^{n-2m}}f(y,u(y))dy.
\end{equation}
\end{rem}

\begin{rem}\label{rem12}
We would like to mention that the method of scaling spheres developed here can hardly be applied directly to the negative exponent cases $p<0$. In order to derive a complete picture on the classification of nonnegative solutions to H\'{e}non-Hardy type equations for all $p\in\mathbb{R}$, it is equally interesting to investigate the cases $p\leq0$. For example, Duong, Nguyen and Vu (J. Pseudo-Differ. Oper. Appl., 11 (2020), 1719-1730) proved the nonexistence of positive super-solutions for the fractional Lane-Emden system with non-positive exponents $p\leq0$ and $q\leq0$. It seems that similar results for the fractional H\'{e}non-Hardy type equations are still open. It certainly deserves to investigate the fractional or higher order H\'{e}non-Hardy type equations with non-positive exponents $p\leq0$ and could lead to some interesting works.
\end{rem}

The rest of this paper is organized as follows. In section 2, we develop the \emph{(direct) method of scaling spheres} and prove Theorem \ref{Thm0}. In section 3, we prove Theorem \ref{Thm1} via the \emph{method of scaling spheres in integral forms}. Section 4 and 5 are devoted to the proof of Theorem \ref{Thm2} and Theorem \ref{Thm3} respectively. In section 6, we prove Theorem \ref{ball} using \emph{the method of scaling spheres in local way}.

In the following, we will use $C$ to denote a general positive constant that may depend on $n$, $\alpha$, $a$, $p$ and $u$, and whose value may differ from line to line.

\section{Proof of Theorem \ref{Thm0}}
In this section, we will prove Theorem \ref{Thm0} by using contradiction arguments and \emph{the (direct) method of scaling spheres}. Now suppose on the contrary that $u\geq0$ satisfies equation \eqref{PDE} but $u$ is not identically zero, then there exists some $\bar{x}\in\mathbb{R}^{n}$ such that $u(\bar{x})>0$.

\subsection{Equivalence between PDE and IE}
In order to get a contradiction, we first need to show that the solution $u$ to \eqref{PDE} also satisfies the equivalent integral equation
\begin{equation}\label{IE}
  u(x)=C_{n,\alpha}\int_{\mathbb{R}^{n}}\frac{|y|^{a}}{|x-y|^{n-\alpha}}u^{p}(y)dy.
\end{equation}
\begin{thm}\label{equivalent}
Assume $n>\alpha$, $0<\alpha\leq2$, $-\alpha<a<+\infty$ and $0<p<\infty$. Suppose $u$ is nonnegative solution to \eqref{PDE}, then it also solves the integral equation \eqref{IE}, and vice versa.
\end{thm}
\begin{proof}
The following lemma concerning the removable singularity is necessary for our proof.
\begin{lem}\label{lemma1}
Assume $n>\alpha$ and $0<\alpha\leq2$. Suppose $u$ is $\alpha$-harmonic in $B_{R}(0)\setminus\{0\}$ and satisfies
\begin{equation*}
  u(x)=o(|x|^{\alpha-n}), \,\,\,\,\,\,\,\,\,\, \text{as} \,\,\, |x|\rightarrow0.
\end{equation*}
Then $u$ can be defined at $0$ so that it is $\alpha$-harmonic in $B_{R}(0)$.
\end{lem}
Lemma \ref{lemma1} can be proved directly by using the Poisson integral formula and maximum principles (Lemma \ref{max}), so we omit the details.

For arbitrary $R>0$, let
\begin{equation}\label{2-1-29}
v_R(x)=\int_{B_R(0)}G^\alpha_R(x,y)|y|^{a}u^{p}(y)dy,
\end{equation}
where Green's function for $(-\Delta)^{\frac{\alpha}{2}}$ with $0<\alpha\leq2$ on $B_R(0)$ is given by
\begin{equation}\label{2-1-30}
G^\alpha_R(x,y):=\frac{C_{n,\alpha}}{|x-y|^{n-\alpha}}\int_{0}^{\frac{t_{R}}{s_{R}}}\frac{b^{\frac{\alpha}{2}-1}}{(1+b)^{\frac{n}{2}}}db
\,\,\,\,\,\,\,\,\, \text{if} \,\, x,y\in B_{R}(0)
\end{equation}
with $s_{R}=\frac{|x-y|^{2}}{R^{2}}$, $t_{R}=\left(1-\frac{|x|^{2}}{R^{2}}\right)\left(1-\frac{|y|^{2}}{R^{2}}\right)$, and $G^{\alpha}_{R}(x,y)=0$ if $x$ or $y\in\mathbb{R}^{n}\setminus B_{R}(0)$ (see \cite{K}). Then, we can derive from $-\alpha<a<+\infty$ that $v_{R}\in C^{1,1}_{loc}\left(B_{R}(0)\setminus\{0\}\right)\cap C(\mathbb{R}^{n})\cap\mathcal{L}_{\alpha}(\mathbb{R}^{n})$ ($v_{R}\in C^{2}\left(B_{R}(0)\setminus\{0\}\right)\cap C(\mathbb{R}^{n})$ if $\alpha=2$) and satisfies
\begin{equation}\label{2-1-31}\\\begin{cases}
(-\Delta)^{\frac{\alpha}{2}}v_R(x)=|x|^{a}u^{p}(x), \qquad x\in B_R(0)\setminus\{0\},\\
v_R(x)=0,\ \ \ \ \ \ \ x\in \mathbb{R}^{n}\setminus B_R(0).
\end{cases}\end{equation}
Let $w_R(x)=u(x)-v_R(x)\in C^{1,1}_{loc}\left(B_{R}(0)\setminus\{0\}\right)\cap C(\mathbb{R}^{n})\cap\mathcal{L}_{\alpha}(\mathbb{R}^{n})$ ($w_R\in C^{2}\left(B_{R}(0)\setminus\{0\}\right)\cap C(\mathbb{R}^{n})$ if $\alpha=2$). By Lemma \ref{lemma1}, \eqref{PDE} and \eqref{2-1-31}, we have $w_R\in C^{1,1}_{loc}\left(B_{R}(0)\right)\cap C(\mathbb{R}^{n})\cap\mathcal{L}_{\alpha}(\mathbb{R}^{n})$ ($w_R\in C^{2}\left(B_{R}(0)\right)\cap C(\mathbb{R}^{n})$ if $\alpha=2$) and satisfies
\begin{equation}\label{2-1-32}\\\begin{cases}
(-\Delta)^{\frac{\alpha}{2}}w_R(x)=0, \qquad x\in B_R(0),\\
w_R(x)\geq0, \qquad x\in \mathbb{R}^{n}\setminus B_R(0).
\end{cases}\end{equation}

Now we need the following maximum principle for fractional Laplacians.
\begin{lem}\label{max}(Maximum principle, \cite{CLL,Si})
Let $\Omega$ be a bounded domain in $\mathbb{R}^{n}$ and $0<\alpha<2$. Assume that $u\in\mathcal{L}_{\alpha}\cap C^{1,1}_{loc}(\Omega)$ and is l.s.c. on $\overline{\Omega}$. If $(-\Delta)^{\frac{\alpha}{2}}u\geq 0$ in $\Omega$ and $u\geq 0$ in $\mathbb{R}^n\setminus\Omega$, then $u\geq 0$ in $\mathbb{R}^n$. Moreover, if $u=0$ at some point in $\Omega$, then $u=0$ a.e. in $\mathbb{R}^{n}$. These conclusions also hold for unbounded domain $\Omega$ if we assume further that
\[\liminf_{|x|\rightarrow\infty}u(x)\geq0.\]
\end{lem}
By Lemma \ref{max} and maximal principles, we deduce that for any $R>0$,
\begin{equation}\label{2-1-33}
  w_R(x)=u(x)-v_{R}(x)\geq0, \,\,\,\, \forall \,\, x\in\mathbb{R}^{n}.
\end{equation}
Now, for each fixed $x\in\mathbb{R}^{n}$, letting $R\rightarrow\infty$ in \eqref{2-1-33}, we have
\begin{equation}\label{2-1-34}
u(x)\geq C_{n,\alpha}\int_{\mathbb{R}^{n}}\frac{|y|^{a}}{|x-y|^{n-\alpha}}u^{p}(y)dy=:v(x)\geq0.
\end{equation}
Take $x=0$ in \eqref{2-1-34}, we get that $u$ satisfies the following integrability
\begin{equation}\label{2-1-35}
  \int_{\mathbb{R}^{n}}\frac{u^{p}(y)}{|y|^{n-\alpha-a}}dy\leq C_{n,\alpha}^{-1}\,u(0)<\infty.
\end{equation}
One can observe that $v\in C^{1,1}_{loc}(\mathbb{R}^{n}\setminus\{0\})\cap C(\mathbb{R}^{n})\cap\mathcal{L}_{\alpha}(\mathbb{R}^{n})$ ($v\in C^{2}(\mathbb{R}^{n}\setminus\{0\})\cap C(\mathbb{R}^{n})$ if $\alpha=2$) is a solution of
\begin{equation}\label{2-1-36}
(-\Delta)^{\frac{\alpha}{2}}v(x)=|x|^{a}u^{p}(x),  \,\,\,\,\,\forall \, x\in \mathbb{R}^n\setminus\{0\}.
\end{equation}
Define $w(x)=u(x)-v(x)$, then by Lemma \ref{lemma1}, \eqref{PDE} and \eqref{2-1-36}, we have $w\in C^{1,1}_{loc}(\mathbb{R}^{n})\cap\mathcal{L}_{\alpha}(\mathbb{R}^{n})$ ($w\in C^{2}(\mathbb{R}^{n})$ if $\alpha=2$) and satisfies
\begin{equation}\label{2-1-37}\\\begin{cases}
(-\Delta)^{\frac{\alpha}{2}}w(x)=0, \,\,\,\,\,  x\in \mathbb{R}^n,\\
w(x)\geq0 \,\,\,\,\,\,  x\in \mathbb{R}^n.
\end{cases}\end{equation}

Now we need the following Liouville type theorem for $\alpha$-harmonic functions in $\mathbb{R}^{n}$.
\begin{lem}\label{Liouville}(Liouville theorem, \cite{BKN,Fall})
Assume $n\geq1$ and $0<\alpha<2$. Let $u$ be a strong solution of
\begin{equation*}\\\begin{cases}
(-\Delta)^{\frac{\alpha}{2}}u(x)=0, \,\,\,\,\,\,\,\, x\in\mathbb{R}^{n}, \\
u(x)\geq0, \,\,\,\,\,\,\, x\in\mathbb{R}^{n},
\end{cases}\end{equation*}
then $u\equiv C\geq0$.
\end{lem}
For the proof of Lemma \ref{Liouville}, please refer to \cite{BKN,Fall}, see also \cite{CS,ZCCY}.

From Lemma \ref{Liouville} and Liouville theorem for harmonic functions, we get $w(x)=u(x)-v(x)\equiv C\geq0$. Thus, we have proved that
\begin{equation}\label{2-1-38}
  u(x)=C_{n,\alpha}\int_{\mathbb{R}^{n}}\frac{|y|^{a}}{|x-y|^{n-\alpha}}u^{p}(y)dy+C\geq C\geq0.
\end{equation}
Now, by combining \eqref{2-1-35} with \eqref{2-1-38}, we get
\begin{equation}\label{2-1-39}
  C^{p}\int_{\mathbb{R}^{n}}\frac{1}{|y|^{n-\alpha-a}}dy
  \leq\int_{\mathbb{R}^{n}}\frac{u^{p}(y)}{|y|^{n-\alpha-a}}dy<\infty,
\end{equation}
from which we can infer immediately that $C=0$. Therefore, we arrived at
\begin{equation}\label{2-1-40}
  u(x)=C_{n,\alpha}\int_{\mathbb{R}^{n}}\frac{|y|^{a}}{|x-y|^{n-\alpha}}u^{p}(y)dy,
\end{equation}
that is, $u$ satisfies the integral equation \eqref{IE}.

Conversely, assume that $u$ is a nonnegative classical solution of integral equation \eqref{IE}, then
\begin{eqnarray}\label{2e25}
(-\Delta)^{\frac{\alpha}{2}}u(x)
\nonumber &=& \int_{\mathbb{R}^{n}}{\left[(-\Delta)^{\frac{\alpha}{2}}\left(\frac{C}{|x-\xi|^{n-\alpha}}\right)\right]}|\xi|^{a}u^{p}(\xi)d{\xi}
\\
\nonumber &=& \int_{\mathbb{R}^{n}}\delta(x-\xi)|\xi|^{a}u^{p}(\xi)d{\xi}=|x|^{a}u^{p}(x),
\end{eqnarray}
that is, $u$ also solves the PDE \eqref{PDE}. This completes the proof of equivalence between PDE \eqref{PDE} and IE \eqref{IE}.
\end{proof}

\begin{rem}\label{rem0}
Lemma \ref{max} has been established first by Silvestre \cite{Si} without the assumption $u\in C^{1,1}_{loc}(\Omega)$. In \cite{CLL}, Chen, Li and Li provided a much more elementary and simpler proof for Lemma \ref{max} under the assumption $u\in C^{1,1}_{loc}(\Omega)$.
\end{rem}

Since the nonnegative solution $u$ with $u(\bar{x})>0$ also satisfy the integral equation \eqref{IE}, it is actually a positive solution in $\mathbb{R}^{n}$, that is,
\begin{equation}\label{2-0}
  u(x)>0 \quad\quad\quad \text{in} \,\, \mathbb{R}^{n}.
\end{equation}
Moreover, there exist a constant $C>0$, such that the solution $u$ satisfies the following lower bound:
\begin{equation}\label{2-1}
  u(x)\geq\frac{C}{|x|^{n-\alpha}} \,\,\,\,\,\,\,\,\,\,\,\,\, \text{for} \,\,\, |x|\geq1.
\end{equation}
Indeed, since $u>0$ also satisfy the integral equation \eqref{IE}, we can deduce that
\begin{eqnarray}\label{2-2}
  u(x)&\geq& C_{n,\alpha}\int_{|y|\leq\frac{1}{2}}\frac{|y|^{a}}{|x-y|^{n-\alpha}}u^{p}(y)dy\\
 \nonumber &\geq& \frac{C}{|x|^{n-\alpha}}\int_{|y|\leq\frac{1}{2}}|y|^{a}u^{p}(y)dy=:\frac{C}{|x|^{n-\alpha}}
\end{eqnarray}
for all $|x|\geq1$.

\subsection{The method of scaling spheres}

In this subsection, we will apply the \emph{(direct) method of scaling spheres} to show the following lower bound estimates for positive solution $u$, which contradict with the integral equation \eqref{IE} for $0<p<\frac{n+\alpha+2a}{n-\alpha}$.
\begin{thm}\label{lower0}
Assume $n>\alpha$, $0<\alpha\leq2$, $-\alpha<a<+\infty$ and $0<p<\frac{n+\alpha+2a}{n-\alpha}$. Suppose $u$ is a positive solution to \eqref{PDE}, then it satisfies the following lower bound estimates: for $|x|\geq1$,
\begin{equation}\label{lb1}
  u(x)\geq C_{\kappa}|x|^{\kappa} \quad\quad \forall \, \kappa<\frac{\alpha+a}{1-p}, \quad\quad \text{if} \,\,\,\, 0<p<1;
\end{equation}
\begin{equation}\label{lb2}
  u(x)\geq C_{\kappa}|x|^{\kappa} \quad\quad \forall \, \kappa<+\infty, \quad\quad \text{if} \,\,\,\, 1\leq p<\frac{n+\alpha+2a}{n-\alpha}.
\end{equation}
\end{thm}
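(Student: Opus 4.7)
The plan is a three-stage argument based on the integral representation \eqref{IE}: (i) a Kelvin-type comparison obtained by scaling spheres centered at the origin; (ii) extraction from that comparison of a ``half-fundamental'' pointwise lower bound; and (iii) an exponent bootstrap through the integral equation that amplifies the lower bound to the values stated in \eqref{lb1} and \eqref{lb2}.

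\medskip

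\emph{Step 1 (Kelvin identity and scaling-spheres comparison).} Define $u_{\lambda}(x):=(\lambda/|x|)^{n-\alpha}u(\lambda^{2}x/|x|^{2})$ and, via the change of variables $y\mapsto\lambda^{2}y/|y|^{2}$ in \eqref{IE}, derive the transformed identity
\begin{equation}
  u_{\lambda}(x)=C_{n,\alpha}\int_{\mathbb{R}^{n}}\frac{|y|^{a}}{|x-y|^{n-\alpha}}\Bigl(\frac{\lambda}{|y|}\Bigr)^{\tau}u_{\lambda}^{p}(y)\,dy,\qquad \tau:=(n+\alpha+2a)-p(n-\alpha).
\end{equation}
The subcritical assumption $p<p_{c}(a)$ is equivalent to $\tau>0$, which yields a strictly favorable sign for the comparison kernel on $B_\lambda^{c}$. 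The goal is to show
\begin{equation}
  u(x)\ge u_{\lambda}(x),\qquad\forall\,\lambda>0,\ \ \forall\,|x|\ge\lambda.
\end{equation}
Following the usual template, this splits into starting the comparison for small $\lambda>0$ and continuing it to all $\lambda$. For the start, subtract the two integral identities and estimate the $L^{q}$-norm of $(u_{\lambda}-u)^{+}$ on the bad set $\Sigma_\lambda^{-}:=\{x\in B_\lambda^{c}:u_\lambda(x)>u(x)\}$ via Hardy--Littlewood--Sobolev; uniform absorption becomes possible for $\lambda$ small, forcing $\Sigma_\lambda^{-}=\emptyset$. For the continuation, set $\lambda_{0}:=\sup\{\lambda>0:u\ge u_\mu\text{ on }B_\mu^{c}\text{ for all }\mu\in(0,\lambda)\}$ and rule out $\lambda_{0}<\infty$ by propagation: strict positivity of $u$ from \eqref{2-0} together with $\tau>0$ forces strict comparison at $\lambda_{0}$, which can then be pushed slightly past $\lambda_{0}$ by the same HLS absorption. \emph{This continuation is the main technical obstacle}: it is precisely where the scaling-spheres variant succeeds in the range $p\ge(n+\alpha+a)/(n-\alpha)$, since the extra factor $(\lambda/|y|)^{\tau}$ supplies the sign that the classical moving-spheres-with-Kelvin argument lacks.

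\medskip

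\emph{Step 2 (initial lower bound).} Parametrize $\lambda=\sqrt{|x|\,s}$ with $0<s\le|x|$; then $(\lambda/|x|)^{n-\alpha}=(s/|x|)^{(n-\alpha)/2}$ and $\lambda^{2}x/|x|^{2}=(s/|x|)x$ lies on $\partial B_{s}$. Taking $s=1$ with $|x|\ge 1$, and setting $C_{0}:=\min_{|y|=1}u(y)>0$ by continuity of $u$ and the positivity \eqref{2-0}, the comparison of Step 1 gives
\begin{equation}
  u(x)\ge C_{0}\,|x|^{-(n-\alpha)/2},\qquad|x|\ge 1.
\end{equation}

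\medskip

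\emph{Step 3 (exponent bootstrap).} Given any lower bound $u(y)\ge c|y|^{\mu}$ for $|y|\ge 1$, restrict the integration in \eqref{IE} to the annulus $\{|x|/4\le|y|\le|x|/2\}$, where $|x-y|\asymp|x|$ and $|y|^{a+p\mu}\asymp|x|^{a+p\mu}$; a direct estimate yields
\begin{equation}
  u(x)\ge C\,|x|^{\alpha+a+p\mu},\qquad|x|\ge 2.
\end{equation}
Thus the exponent iterates under $T(\mu):=\alpha+a+p\mu$ starting from $\mu_{0}:=-(n-\alpha)/2$. The fixed point of $T$ is $\mu^{*}=(\alpha+a)/(1-p)$, and the routine identity $T(\mu_{0})-\mu_{0}=(\alpha+a)+(1-p)(n-\alpha)/2$ is positive throughout the range $0<p<p_{c}(a)$ (automatic for $p\le 1$ and equivalent to $p<p_{c}(a)$ for $p>1$). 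Hence the iterates $\mu_{k}$ strictly increase in each of the three regimes: when $0<p<1$ they converge monotonically up to $\mu^{*}=(\alpha+a)/(1-p)$, proving \eqref{lb1} after finitely many iterations; when $p=1$, $\mu_{k}=\mu_{0}+k(\alpha+a)\to+\infty$; when $1<p<p_{c}(a)$, $\mu_{0}$ sits strictly above the repelling fixed point $\mu^{*}<0$, so $\mu_{k}\to+\infty$. The latter two cases give \eqref{lb2} after finitely many iterations, completing the proof.
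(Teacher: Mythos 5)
Your overall strategy is sound and is, in fact, the \emph{method of scaling spheres in integral forms} that the paper itself uses in Section 3 for Theorem \ref{lower1} and endorses as an alternative in Remark \ref{rem10}; the paper's own proof of Theorem \ref{lower0} instead runs the \emph{direct} method via the Narrow Region Principle (Theorem \ref{NRP1}). Your Steps 2 and 3 are essentially identical to the paper's: the same $\lambda=\sqrt{|x|}$ substitution produces $u(x)\gtrsim|x|^{-(n-\alpha)/2}$, and the iteration $\mu_{k+1}=p\mu_k+\alpha+a$ with $\mu_0=-(n-\alpha)/2$ is the paper's \eqref{2-54} written with the opposite sign convention; your fixed-point analysis of $T$ is correct, including the verification that $T(\mu_0)>\mu_0$ for all $0<p<p_c(a)$. (A cosmetic slip: with the annulus $\{|x|/4\le|y|\le|x|/2\}$ you need $|x|\ge4$, not $|x|\ge2$, for the induction hypothesis to apply.)

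The real problem is in Step 1, and it is consequential: you localize the bad set $\Sigma_\lambda^-$ inside the \emph{exterior} $B_\lambda^c$, but the HLS absorption only closes when the coefficient is small in $L^{n/\alpha}$ of the bad set, and $B_\lambda^c$ is unbounded. After the fold-and-reflect manipulation the coefficient is of the form $|y|^a\xi^{p-1}$ with $\xi$ between $u$ and $u_\lambda$. On the \emph{interior} $B_\lambda$ (the paper's choice, cf.\ \eqref{3-10}--\eqref{3-17}) this is bounded --- $u$ is continuous, $u_\lambda$ is bounded below on $B_\lambda$ by \eqref{2-1}, and $|y|^a\in L^{n/\alpha}(B_\lambda)$ since $a>-\alpha$ --- so $\||y|^a\max\{u^{p-1},u_\lambda^{p-1}\}\|_{L^{n/\alpha}(B_\lambda)}\to0$ as $\lambda\to0$. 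On the exterior, however, for $0<p<1$ one has $\xi^{p-1}\le u^{p-1}(y)\lesssim|y|^{(1-p)(n-\alpha)}$ by \eqref{2-1}, and $|y|^{a+(1-p)(n-\alpha)}$ is \emph{not} in $L^{n/\alpha}(B_\lambda^c)$; for $1\le p$ one has $\xi^{p-1}\le u_\lambda^{p-1}\lesssim\lambda^{(p-1)(n-\alpha)}|y|^{-(p-1)(n-\alpha)}$, which lies in $L^{n/\alpha}(B_\lambda^c)$ only when $p>\frac{n+a}{n-\alpha}$. So ``uniform absorption becomes possible for $\lambda$ small'' fails on $B_\lambda^c$ for a substantial part of the claimed range $(0,p_c(a))$, including all $p<1$. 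The fix is simply to work with $B_\lambda^-\subset B_\lambda$ as the paper does, which via the Kelvin identity $\omega^\lambda(x)=-(\omega^\lambda)_\lambda(x)$ encodes the very same comparison $u\ge u_\lambda$ on $B_\lambda^c$. The same correction applies to the continuation step, whose one-line ``propagation'' argument is otherwise correct but highly compressed (one should first rule out $\omega^{\lambda_0}\equiv0$ via the strict sign of $(\lambda_0/|y|)^\tau-1$, then propagate strict positivity of $\omega^{\lambda_0}$ through the Green's representation before the uniform-continuity/narrow-region absorption).
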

\begin{proof}
Given any $\lambda>0$, we first define the Kelvin transform of a function $u:\,\mathbb{R}^{n}\rightarrow\mathbb{R}$ centered at $0$ by
\begin{equation}\label{Kelvin}
  u_{\lambda}(x)=\left(\frac{\lambda}{|x|}\right)^{n-\alpha}u\left(\frac{\lambda^{2}x}{|x|^{2}}\right)
\end{equation}
for arbitrary $x\in\mathbb{R}^{n}\setminus\{0\}$. It's obvious that the Kelvin transform $u_{\lambda}$ may have singularity at $0$ and $\lim_{|x|\rightarrow\infty}|x|^{n-\alpha}u_{\lambda}(x)=\lambda^{n-\alpha}u(0)>0$. By \eqref{Kelvin}, one can infer from the regularity assumptions on $u$ that $u_{\lambda}\in\mathcal{L}_{\alpha}(\mathbb{R}^{n})\cap C^{1,1}_{loc}(\mathbb{R}^{n}\setminus\{0\})$ if $0<\alpha<2$ and $u_{\lambda}\in C^{2}(\mathbb{R}^{n}\setminus\{0\})$ if $\alpha=2$. Furthermore, we can deduce from \eqref{PDE} and \eqref{Kelvin} that (for the invariance properties of fractional or higher order Laplacians under the Kelvin type transforms, please refer to \cite{CGS,CL,CLL,CLO,Lin,WX})
\begin{equation}\label{2-3}
   (-\Delta)^{\frac{\alpha}{2}}u_{\lambda}(x)=\left(\frac{\lambda}{|x|}\right)^{n+\alpha}\frac{\lambda^{2a}}{|x|^{a}}u^{p}\left(\frac{\lambda^{2}x}{|x|^{2}}\right)
   =\left(\frac{\lambda}{|x|}\right)^{\tau}|x|^{a}u_{\lambda}^{p}(x), \,\,\,\,\,\,\,\,\, x\in\mathbb{R}^{n}\setminus\{0\},
\end{equation}
where $\tau:=n+\alpha+2a-p(n-\alpha)>0$.

Next, we will carry out the process of scaling spheres with respect to the origin $0\in\mathbb{R}^{n}$. For this purpose, we need some definitions.

Let $\lambda>0$ be an arbitrary positive real number and let the scaling sphere be
\begin{equation}\label{2-4}
  S_{\lambda}:=\partial B_{\lambda}(0)=\{x\in\mathbb{R}^{n}:\, |x|=\lambda\}.
\end{equation}
We define the reflection of $x$ about the sphere $S_{\lambda}$ by $x^{\lambda}:=\frac{\lambda^{2}x}{|x|^{2}}$ and define
\begin{equation}\label{2-5}
  \widetilde{B_{\lambda}}(0):=\{x\in\mathbb{R}^{n}: \, x^{\lambda}\in B_{\lambda}(0)\setminus\{0\}\}.
\end{equation}

Let $\omega^{\lambda}(x):=u_{\lambda}(x)-u(x)$ for any $x\in B_{\lambda}(0)\setminus\{0\}$. By the definition of $u_{\lambda}$ and $\omega^{\lambda}$, we have
\begin{eqnarray}\label{2-6}
  \omega^{\lambda}(x)&=&u_{\lambda}(x)-u(x)=\left(\frac{\lambda}{|x|}\right)^{n-\alpha}u(x^{\lambda})-u(x) \\
 \nonumber &=&\left(\frac{\lambda}{|x|}\right)^{n-\alpha}\left(u(x^{\lambda})-\left(\frac{\lambda}{|x^{\lambda}|}\right)^{n-\alpha}u\big((x^{\lambda})^{\lambda}\big)\right) \\
 \nonumber &=&-\left(\frac{\lambda}{|x|}\right)^{n-\alpha}\omega^{\lambda}(x^{\lambda})=-\big(\omega^{\lambda}\big)_{\lambda}(x)
\end{eqnarray}
for every $x\in B_{\lambda}(0)\setminus\{0\}$.

We will first show that, for $\lambda>0$ sufficiently small,
\begin{equation}\label{2-7}
  \omega^{\lambda}(x)\geq0, \,\,\,\,\,\, \forall \,\, x\in B_{\lambda}(0)\setminus\{0\}.
\end{equation}
Then, we start dilating the sphere $S_{\lambda}$ from near the origin $0$ outward as long as \eqref{2-7} holds, until its limiting position $\lambda=+\infty$ and derive lower bound estimates on $u$. Therefore, the scaling sphere process can be divided into two steps.

\emph{Step 1. Start dilating the sphere from near $\lambda=0$.} Define
\begin{equation}\label{2-8}
  B^{-}_{\lambda}:=\{x\in B_{\lambda}(0)\setminus\{0\} \, | \, \omega^{\lambda}(x)<0\}.
\end{equation}
We will show through contradiction arguments that, for $\lambda>0$ sufficiently small,
\begin{equation}\label{2-9}
  B^{-}_{\lambda}=\emptyset.
\end{equation}

Suppose \eqref{2-9} does not hold, that is, $B^{-}_{\lambda}\neq\emptyset$ and hence $\omega^{\lambda}$ is negative somewhere in $B_{\lambda}(0)\setminus\{0\}$. For arbitrary $x\in B_{\lambda}(0)\setminus\{0\}$, we get from \eqref{2-3} that
\begin{eqnarray}\label{2-10}
  (-\Delta)^{\frac{\alpha}{2}}\omega^{\lambda}(x)&=&\left(\frac{\lambda}{|x|}\right)^{\tau}|x|^{a}u_{\lambda}^{p}(x)-|x|^{a}u^{p}(x) \\
 \nonumber &\geq& |x|^{a}\left(u_{\lambda}^{p}(x)-u^{p}(x)\right)=p|x|^{a}\psi_{\lambda}^{p-1}(x)\omega^{\lambda}(x),
\end{eqnarray}
where $\psi_{\lambda}(x)$ is valued between $u(x)$ and $u_{\lambda}(x)$ by mean value theorem. Therefore, for all $x\in B^{-}_{\lambda}$ with $\theta\lambda<|x|<\lambda$,
\begin{equation}\label{2-11}
  (-\Delta)^{\frac{\alpha}{2}}\omega^{\lambda}(x)\geq p\max\{1,\theta^{a}\}\lambda^{a}\max\left\{u^{p-1}(x),u_{\lambda}^{p-1}(x)\right\}\omega^{\lambda}(x)=:c_{\lambda}(x)\omega^{\lambda}(x),
\end{equation}
where $\theta:=\left[\frac{\min_{|x|\leq1}u}{1+\max_{|x|\leq1}u}\right]^{\frac{1}{n-\alpha}}\in(0,1)$.

\begin{rem}\label{remark8}
For the convenience of readers and in order to avoid misunderstanding, comparing with the original version of our paper on arXiv (arXiv: 1810.02752v1), we simply emphasize here the condition $\theta\lambda<|x|<\lambda$ for \eqref{2-11}. It is quite clear from the \emph{Narrow region principle} Theorem \ref{NRP1} and the proof that we only need to take the case $|x|\sim\lambda$ into account.
\end{rem}

Now we need the following Theorem, which is a variant and generalization of the \emph{Narrow region principle} (Theorem 2.2 in \cite{CLZ}).
\begin{thm}\label{NRP1}(Narrow region principle)
Assume $n>\alpha$, $0<\alpha\leq2$, $-\alpha<a<+\infty$ and $0<p<+\infty$. Let $\lambda>0$ and $A_{\lambda,l}:=\{x\in\mathbb{R}^{n}|\,\lambda-l<|x|<\lambda\}$ be an annulus with small thickness $0<l<\lambda$. Suppose $\omega^{\lambda}\in\mathcal{L}_{\alpha}(\mathbb{R}^{n})\cap C^{1,1}_{loc}(A_{\lambda,l})$ if $0<\alpha<2$ ($\omega^{\lambda}\in C^{2}(A_{\lambda,l})$ if $\alpha=2$) and satisfies
\begin{equation}\label{nrp1}\\\begin{cases}
(-\Delta)^{\frac{\alpha}{2}}\omega^{\lambda}(x)-c_{\lambda}(x)\omega^{\lambda}(x)\geq0 \,\,\,\,\, \text{in} \,\,\, A_{\lambda,l}\cap B^{-}_{\lambda},\\
\text{negative minimum of} \,\, \omega^{\lambda}\,\, \text{is attained in the interior of}\,\, B_{\lambda}(0)\setminus\{0\} \,\,\text{if} \,\,\, B^{-}_{\lambda}\neq\emptyset,\\
\text{negative minimum of} \,\,\, \omega^{\lambda} \,\,\, \text{cannot be attained in} \,\,\, (B_{\lambda}(0)\setminus\{0\})\setminus A_{\lambda,l},
\end{cases}\end{equation}
where $c_{\lambda}(x):=p\max\{1,\theta^{a}\}\lambda^{a}\max\left\{u^{p-1}(x),u_{\lambda}^{p-1}(x)\right\}$. Then, we have \\
(i) there exists a sufficiently small constant $\delta_{0}>0$, such that, for all $0<\lambda\leq\delta_{0}$,
\begin{equation}\label{nrp1-1}
  \omega^{\lambda}(x)\geq0, \,\,\,\,\,\, \forall \,x\in A_{\lambda,l};
\end{equation}
(ii) there exists a sufficiently small $l_{0}>0$ depending on $\lambda$ continuously, such that, for all $0<l\leq l_{0}$,
\begin{equation}\label{nrp1-2}
  \omega^{\lambda}(x)\geq0, \,\,\,\,\,\, \forall \,x\in A_{\lambda,l}.
\end{equation}
\end{thm}
\begin{proof}
Suppose on contrary that \eqref{nrp1-1} and \eqref{nrp1-2} do not hold, we will obtain a contradiction for any $0<\lambda\leq\delta_{0}$ with constant $\delta_{0}$ small enough and any $0<l\leq l_{0}(\lambda)$ with $l_{0}(\lambda)$ sufficiently small respectively. By \eqref{nrp1} and our hypothesis, there exists $\tilde{x}\in A_{\lambda,l}\cap B^{-}_{\lambda}\subset\{x\in\mathbb{R}^{n}|\,\lambda-l<|x|<\lambda\}$ such that
\begin{equation}\label{2-12}
  \omega^{\lambda}(\tilde{x})=\min_{B_{\lambda}(0)\setminus\{0\}}\omega^{\lambda}(x)<0.
\end{equation}

For $0<\alpha<2$, our proof is similar to Theorem 2.2 in \cite{CLZ}. The key ingredient is, the same calculations as in the proof of formulae (2.18) and (2.19) in \cite{CLZ} will yield the following estimate at the negative minimum point $\tilde{x}$:
\begin{equation}\label{2-13}
  (-\Delta)^{\frac{\alpha}{2}}\omega^{\lambda}(\tilde{x})\leq\frac{C}{l^{\alpha}}\omega^{\lambda}(\tilde{x}).
\end{equation}
For $\alpha=2$, we can also obtain the same estimate as \eqref{2-13} at some point $x_{0}\in A_{\lambda,l}\cap B^{-}_{\lambda}$. To this end, we define
\begin{equation}\label{2-14}
  \zeta(x):=\cos\frac{|x|-\lambda+l}{l},
\end{equation}
then it follows that $\zeta(x)\in[\cos1,1]$ for any $x\in\overline{A_{\lambda,l}}=\{x\in\mathbb{R}^{n}\,|\,\lambda-l\leq|x|\leq\lambda\}$ and $-\frac{\Delta\zeta(x)}{\zeta(x)}\geq\frac{1}{l^2}$. Define
\begin{equation}\label{2-15}
  \overline{\omega^{\lambda}}(x):=\frac{\omega^{\lambda}(x)}{\zeta(x)}
\end{equation}
for $x\in\overline{A_{\lambda,l}}$. Then there exists a $x_{0}\in A_{\lambda,l}\cap B^{-}_{\lambda}$ such that
\begin{equation}\label{2-16}
  \overline{\omega^{\lambda}}(x_{0})=\min_{\overline{A_{\lambda,l}}}\overline{\omega^{\lambda}}(x)<0.
\end{equation}
Since
\begin{equation}\label{2-17}
  -\Delta\omega^{\lambda}(x_{0})=-\Delta\overline{\omega^{\lambda}}(x_{0})\zeta(x_{0})-2\nabla\overline{\omega^{\lambda}}(x_{0})\cdot\nabla\zeta(x_{0})
  -\overline{\omega^{\lambda}}(x_{0})\Delta\zeta(x_{0}),
\end{equation}
one immediately has
\begin{equation}\label{2-18}
 -\Delta\omega^{\lambda}(x_{0})\leq\frac{1}{l^2}\omega^{\lambda}(x_{0}).
\end{equation}

In conclusion, we have proved that for both $0<\alpha<2$ and $\alpha=2$, there exists some $\hat{x}\in A_{\lambda,l}\cap B^{-}_{\lambda}$ such that
\begin{equation}\label{2-19}
  (-\Delta)^{\frac{\alpha}{2}}\omega^{\lambda}(\hat{x})\leq\frac{C}{l^{\alpha}}\omega^{\lambda}(\hat{x})<0.
\end{equation}
At the same time, by \eqref{nrp1}, we also have
\begin{equation}\label{2-20}
  (-\Delta)^{\frac{\alpha}{2}}\omega^{\lambda}(\hat{x})\geq c_{\lambda}(\hat{x})\omega^{\lambda}(\hat{x}),
\end{equation}
where $c_{\lambda}(\hat{x}):=p\max\{1,\theta^{a}\}\lambda^{a}\max\left\{u^{p-1}(\hat{x}),u_{\lambda}^{p-1}(\hat{x})\right\}$. It follows immediately that
\begin{equation}\label{2-21}\\\begin{cases}
0\leq c_{\lambda}(\hat{x})\leq p\max\{1,\theta^{a}\}\lambda^{a}u^{p-1}(\hat{x})\leq p\max\{1,\theta^{a}\}\lambda^{a}M_{\lambda}^{p-1} \quad\quad \text{if} \,\, 1\leq p<+\infty, \\
0\leq c_{\lambda}(\hat{x})\leq p\max\{1,\theta^{a}\}\lambda^{a}u_{\lambda}^{p-1}(\hat{x})\leq p\max\{1,\theta^{a}\}\lambda^{a}N_{\lambda}^{1-p} \quad\quad \text{if} \,\, 0<p<1,
\end{cases}\end{equation}
where
\begin{equation}\label{2-22}
  M_{\lambda}:=\sup_{x\in B_{\lambda}(0)}u(x)<+\infty, \quad\quad N_{\lambda}:=\left(\inf_{x\in B_{\lambda}(0)\setminus\{0\}}u_{\lambda}(x)\right)^{-1}<+\infty,
\end{equation}
more precisely, by \eqref{2-1}, one easily verifies $N_{\lambda}\leq C\max\left\{1,\lambda^{n-\alpha}\right\}<+\infty$. Therefore, we can deduce from \eqref{2-19}, \eqref{2-20} and \eqref{2-21} that
\begin{equation}\label{2-23}\\\begin{cases}
\frac{C}{\lambda^{\alpha}}\leq\frac{C}{l^{\alpha}}\leq p\max\{1,\theta^{a}\}\lambda^{a}M_{\lambda}^{p-1} \quad\quad \text{if} \,\, 1\leq p<+\infty, \\
\\
\frac{C}{\lambda^{\alpha}}\leq\frac{C}{l^{\alpha}}\leq p\max\{1,\theta^{a}\}\lambda^{a}N_{\lambda}^{1-p} \quad\quad \text{if} \,\, 0<p<1.
\end{cases}\end{equation}
Note that $a>-\alpha$, we can derive a contradiction from \eqref{2-23} directly if $0<\lambda\leq\delta_{0}$ for some constant $\delta_{0}$ small enough, or if $0<l\leq l_{0}$ for some sufficiently small $l_{0}$ depending on $\lambda$ continuously. Thus \eqref{nrp1-1} and \eqref{nrp1-2} must hold.

Furthermore, by \eqref{nrp1}, we can actually deduce from $\omega^{\lambda}\geq0$ in $A_{\lambda,l}$ that
\begin{equation}\label{nrp1-3}
  \omega^{\lambda}\geq0 \,\,\,\,\,\, \text{in} \,\,\, B_{\lambda}(0)\setminus\{0\}.
\end{equation}
This completes the proof of Theorem \ref{NRP1}.
\end{proof}

In order to apply Theorem \ref{NRP1} to show \eqref{2-9}, we first prove that there exists a sufficiently small $\eta_{0}>0$ such that if $0<\lambda\leq\eta_{0}$, then
\begin{equation}\label{2-24}
  \omega^{\lambda}(x)\geq1, \,\,\,\,\,\,\,\,\, \forall \, x\in \overline{B_{\theta\lambda}(0)}\setminus\{0\},
\end{equation}
where $\theta:=\left[\frac{\min_{|x|\leq1}u}{1+\max_{|x|\leq1}u}\right]^{\frac{1}{n-\alpha}}\in(0,1)$. In fact, \eqref{2-1} implies that
\begin{equation}\label{2-25}
  u_{\lambda}(x)\geq\left(\frac{\lambda}{|x|}\right)^{n-\alpha}\min\Bigg\{\frac{C}{\left|\frac{\lambda^{2}x}{|x|^{2}}\right|^{n-\alpha}},\min_{|x|\leq1}u\Bigg\}
  \geq\min\left\{\frac{C}{\lambda^{n-\alpha}},\left(\frac{\lambda}{|x|}\right)^{n-\alpha}\min_{|x|\leq1}u\right\}
\end{equation}
for all $x\in\overline{B_{\theta\lambda}(0)}\setminus\{0\}$. Therefore, we have if $0<\lambda\leq\eta_{0}$ for some $\eta_{0}>0$ small enough, then
\begin{equation}\label{2-26}
  \omega^{\lambda}(x)=u_{\lambda}(x)-u(x)\geq\min\left\{\frac{C}{\lambda^{n-\alpha}},\left(\frac{\lambda}{|x|}\right)^{n-\alpha}\min_{|x|\leq1}u\right\}
  -\max_{|x|\leq\theta\lambda}u(x)\geq1
\end{equation}
for all $x\in \overline{B_{\theta\lambda}(0)}\setminus\{0\}$, so we arrive at \eqref{2-24}.

Now define $\epsilon_{0}:=\min\{\delta_{0},\eta_{0}\}$. If $0<\lambda\leq\epsilon_{0}$, let $l:=(1-\theta)\lambda\in(0,\lambda)$, then it follows from \eqref{2-11} and \eqref{2-24} that the conditions \eqref{nrp1} in Theorem \ref{NRP1} are satisfied, hence we can deduce from (i) in Theorem \ref{NRP1} that
\begin{equation}\label{2-27}
  \omega^{\lambda}(x)\geq0, \quad\quad \forall \,\, x\in A_{\lambda,l}.
\end{equation}
Therefore, we have proved for all $0<\lambda\leq\epsilon_{0}$, $B^{-}_{\lambda}=\emptyset$, that is,
\begin{equation}\label{2-28}
  \omega^{\lambda}(x)\geq0, \,\,\,\,\,\,\, \forall \, x\in B_{\lambda}(0)\setminus\{0\}.
\end{equation}
This completes Step 1.

\emph{Step 2. Dilate the sphere $S_{\lambda}$ outward until $\lambda=+\infty$ to derive lower bound estimates on $u$.} Step 1 provides us a start point to dilate the sphere $S_{\lambda}$ from near $\lambda=0$. Now we dilate the sphere $S_{\lambda}$ outward as long as \eqref{2-7} holds. Let
\begin{equation}\label{2-29}
  \lambda_{0}:=\sup\{\lambda>0\,|\, \omega^{\mu}\geq0 \,\, in \,\, B_{\mu}(0)\setminus\{0\}, \,\, \forall \, 0<\mu\leq\lambda\}\in(0,+\infty],
\end{equation}
and hence, one has
\begin{equation}\label{2-30}
  \omega^{\lambda_{0}}(x)\geq0, \quad\quad \forall \,\, x\in B_{\lambda_{0}}(0)\setminus\{0\}.
\end{equation}
In what follows, we will prove $\lambda_{0}=+\infty$ by contradiction arguments.

Suppose on contrary that $0<\lambda_{0}<+\infty$. In order to get a contradiction, we will first prove
\begin{equation}\label{2-31}
  \omega^{\lambda_{0}}(x)\equiv0, \,\,\,\,\,\,\forall \, x\in B_{\lambda_{0}}(0)\setminus\{0\}
\end{equation}
by using the \emph{Narrow region principle} (Theorem \ref{NRP1}) and contradiction arguments.

Suppose on contrary that \eqref{2-31} does not hold, that is, $\omega^{\lambda_{0}}\geq0$ but $\omega^{\lambda_{0}}$ is not identically zero in $B_{\lambda_{0}}(0)\setminus\{0\}$, then there exists a $x^{0}\in B_{\lambda_{0}}(0)\setminus\{0\}$ such that $\omega^{\lambda_{0}}(x^{0})>0$. We will obtain a contradiction with \eqref{2-29} via showing that the sphere $S_{\lambda}$ can be dilated outward a little bit further, more precisely, there exists a $\varepsilon>0$ small enough such that $\omega^{\lambda}\geq0$ in $B_{\lambda}(0)\setminus\{0\}$ for all $\lambda\in[\lambda_{0},\lambda_{0}+\varepsilon]$.

For that purpose, we will first show that
\begin{equation}\label{2-32}
  \omega^{\lambda_{0}}(x)>0, \,\,\,\,\,\, \forall \, x\in B_{\lambda_{0}}(0)\setminus\{0\}.
\end{equation}
Indeed, since we have assumed there exists a point $x^{0}\in B_{\lambda_{0}}(0)\setminus\{0\}$ such that $\omega^{\lambda_{0}}(x^{0})>0$, by continuity, there exists a small $\delta>0$ and a constant $c_{0}>0$ such that
\begin{equation}\label{2-33}
B_{\delta}(x^{0})\subset B_{\lambda_{0}}(0)\setminus\{0\} \,\,\,\,\,\, \text{and} \,\,\,\,\,\,
\omega^{\lambda_{0}}(x)\geq c_{0}>0, \,\,\,\,\,\,\,\, \forall \, x\in B_{\delta}(x^{0}).
\end{equation}

Since the positive solution $u$ to \eqref{PDE} also satisfies the integral equation \eqref{IE}, through direct calculations, we get
\begin{equation}\label{2-34}
  u(x)=C\int_{B_{\lambda_{0}}(0)}\frac{|y|^{a}}{|x-y|^{n-\alpha}}u^{p}(y)dy+C\int_{B_{\lambda_{0}}(0)}\frac{|y|^{a}}{\left|\frac{|y|}{\lambda_{0}}x-\frac{\lambda_{0}}{|y|}y\right|^{n-\alpha}}
  \left(\frac{\lambda_{0}}{|y|}\right)^{\tau}u_{\lambda_{0}}^{p}(y)dy
\end{equation}
for any $x\in\mathbb{R}^{n}$, where $\tau:=n+\alpha+2a-p(n-\alpha)>0$. By direct calculations, one can also verify that $u_{\lambda_{0}}$ satisfies the following integral equation
\begin{equation}\label{2-35}
  u_{\lambda_{0}}(x)=C\int_{\mathbb{R}^{n}}\frac{|y|^{a}}{|x-y|^{n-\alpha}}\left(\frac{\lambda_{0}}{|y|}\right)^{\tau}u_{\lambda_{0}}^{p}(y)dy
\end{equation}
for any $x\in\mathbb{R}^{n}\setminus\{0\}$, and hence, it follows immediately that
\begin{equation}\label{2-36}
  u_{\lambda_{0}}(x)=C\int_{B_{\lambda_{0}}(0)}\frac{|y|^{a}}{\left|\frac{|y|}{\lambda_{0}}x-\frac{\lambda_{0}}{|y|}y\right|^{n-\alpha}}u^{p}(y)dy
  +C\int_{B_{\lambda_{0}}(0)}\frac{|y|^{a}}{|x-y|^{n-\alpha}}
  \left(\frac{\lambda_{0}}{|y|}\right)^{\tau}u_{\lambda_{0}}^{p}(y)dy.
\end{equation}
From the integral equations \eqref{2-34} and \eqref{2-36}, one can derive that, for any $x\in B_{\lambda_{0}}(0)\setminus\{0\}$,
\begin{eqnarray}\label{2-37}
  &&\omega^{\lambda_{0}}(x)=u_{\lambda_{0}}(x)-u(x) \\
 \nonumber &=&C\int_{B_{\lambda_{0}}(0)}\Bigg(\frac{|y|^{a}}{|x-y|^{n-\alpha}}-\frac{|y|^{a}}{\left|\frac{|y|}{\lambda_{0}}x-\frac{\lambda_{0}}{|y|}y\right|^{n-\alpha}}\Bigg) \left(\left(\frac{\lambda_{0}}{|y|}\right)^{\tau}u_{\lambda_{0}}^{p}(y)-u^{p}(y)\right)dy\\
 \nonumber &>&C\int_{B_{\lambda_{0}}(0)}\Bigg(\frac{|y|^{a}}{|x-y|^{n-\alpha}}-\frac{|y|^{a}}{\left|\frac{|y|}{\lambda_{0}}x-\frac{\lambda_{0}}{|y|}y\right|^{n-\alpha}}\Bigg) \left(u_{\lambda_{0}}^{p}(y)-u^{p}(y)\right)dy\\
\nonumber &\geq& C\int_{B_{\lambda_{0}}(0)}\Bigg(\frac{|y|^{a}}{|x-y|^{n-\alpha}}-\frac{|y|^{a}}{\left|\frac{|y|}{\lambda_{0}}x-\frac{\lambda_{0}}{|y|}y\right|^{n-\alpha}}\Bigg)
\min\left\{u^{p-1}(y),u_{\lambda_{0}}^{p-1}(y)\right\}\omega^{\lambda_{0}}(y)dy\\
\nonumber &\geq& C\int_{B_{\delta}(x^{0})}\Bigg(\frac{|y|^{a}}{|x-y|^{n-\alpha}}-\frac{|y|^{a}}{\left|\frac{|y|}{\lambda_{0}}x-\frac{\lambda_{0}}{|y|}y\right|^{n-\alpha}}\Bigg)
\min\left\{u^{p-1}(y),u_{\lambda_{0}}^{p-1}(y)\right\}\omega^{\lambda_{0}}(y)dy>0,
\end{eqnarray}
thus we arrive at \eqref{2-32}. Furthermore, \eqref{2-37} also implies that there exists a $0<\eta<\lambda_{0}$ small enough such that, for any $x\in \overline{B_{\eta}(0)}\setminus\{0\}$,
\begin{equation}\label{2-38}
  \omega^{\lambda_{0}}(x)>c_{4}+C\int_{B_{\frac{\delta}{2}}(x^{0})}c_{3}^{a}\,c_{2}\,c_{1}^{p-1}c_{0} \, dy=:\widetilde{c}_{0}>0.
\end{equation}

Now we define
\begin{equation}\label{2-39}
  \tilde{l}_{0}:=\min_{\lambda\in[\lambda_{0},2\lambda_{0}]}l_{0}(\lambda)>0,
\end{equation}
where $l_{0}(\lambda)$ is given by Theorem \ref{NRP1}. For a fixed small $0<r_{0}<\frac{1}{2}\min\{\tilde{l}_{0},\lambda_{0}\}$, by \eqref{2-32} and \eqref{2-38}, we can define
\begin{equation}\label{2-40}
  m_{0}:=\inf_{x\in\overline{B_{\lambda_{0}-r_{0}}(0)}\setminus\{0\}}\omega^{\lambda_{0}}(x)>0.
\end{equation}

Since $u$ is uniformly continuous on arbitrary compact set $K\subset\mathbb{R}^{n}$ (say, $K=\overline{B_{4\lambda_{0}}(0)}$), we can deduce from \eqref{2-40} that, there exists a $0<\varepsilon_{1}<\frac{1}{2}\min\{\tilde{l}_{0},\lambda_{0}\}$ sufficiently small, such that, for any $\lambda\in[\lambda_{0},\lambda_{0}+\varepsilon_{1}]$,
\begin{equation}\label{2-41}
  \omega^{\lambda}(x)\geq\frac{m_{0}}{2}>0, \,\,\,\,\,\, \forall \, x\in\overline{B_{\lambda_{0}-r_{0}}(0)}\setminus\{0\}.
\end{equation}
In order to prove \eqref{2-41}, one should observe that \eqref{2-40} is equivalent to
\begin{equation}\label{2-42}
  |x|^{n-\alpha}u(x)-\lambda_{0}^{n-\alpha}u(x^{\lambda_{0}})\geq m_{0}\lambda_{0}^{n-\alpha}, \,\,\,\,\,\,\,\,\, \forall \, |x|\geq\frac{\lambda_{0}^{2}}{\lambda_{0}-r_{0}}.
\end{equation}
Since $u$ is uniformly continuous on $\overline{B_{4\lambda_{0}}(0)}$, we infer from \eqref{2-42} that there exists a $0<\varepsilon_{1}<\frac{1}{2}\min\{\tilde{l}_{0},\lambda_{0}\}$ sufficiently small, such that, for any $\lambda\in[\lambda_{0},\lambda_{0}+\varepsilon_{1}]$,
\begin{equation}\label{2-43}
  |x|^{n-\alpha}u(x)-\lambda^{n-\alpha}u(x^{\lambda})\geq \frac{m_{0}}{2}\lambda^{n-\alpha}, \,\,\,\,\,\,\,\,\, \forall \, |x|\geq\frac{\lambda^{2}}{\lambda_{0}-r_{0}},
\end{equation}
which is equivalent to \eqref{2-41}, hence we have proved \eqref{2-41}.

For any $\lambda\in[\lambda_{0},\lambda_{0}+\varepsilon_{1}]$, let $l:=\lambda-\lambda_{0}+r_{0}\in(0,\tilde{l}_{0})$, then it follows from \eqref{2-11} and \eqref{2-41} that the conditions \eqref{nrp1} in Theorem \ref{NRP1} are satisfied, hence we can deduce from (ii) in Theorem \ref{NRP1} that
\begin{equation}\label{2-44}
  \omega^{\lambda}(x)\geq0, \quad\quad \forall \,\, x\in A_{\lambda,l}.
\end{equation}
Therefore, we get from \eqref{2-41} and \eqref{2-44} that, $B^{-}_{\lambda}=\emptyset$ for all $\lambda\in[\lambda_{0},\lambda_{0}+\varepsilon_{1}]$, that is,
\begin{equation}\label{2-45}
  \omega^{\lambda}(x)\geq0, \,\,\,\,\,\,\, \forall \,\, x\in B_{\lambda}(0)\setminus\{0\},
\end{equation}
which contradicts with the definition \eqref{2-29} of $\lambda_{0}$. As a consequence, in the case $0<\lambda_{0}<+\infty$, \eqref{2-31} must hold true, that is,
\begin{equation}\label{2-46}
  \omega^{\lambda_{0}}\equiv0 \,\,\,\,\,\, \text{in} \,\,\, B_{\lambda_{0}}(0)\setminus\{0\}.
\end{equation}

However, by the second equality in \eqref{2-37} and \eqref{2-46}, we arrive at
\begin{eqnarray}\label{2-47}
 && 0=\omega^{\lambda_{0}}(x)=u_{\lambda_{0}}(x)-u(x)\\
 \nonumber &=&C\int_{B_{\lambda_{0}}(0)}\Bigg(\frac{|y|^{a}}{|x-y|^{n-\alpha}}-\frac{|y|^{a}}{\left|\frac{|y|}{\lambda_{0}}x-\frac{\lambda_{0}}{|y|}y\right|^{n-\alpha}}\Bigg) \left(\left(\frac{\lambda_{0}}{|y|}\right)^{\tau}-1\right)u^{p}(y)dy>0
\end{eqnarray}
for any $x\in B_{\lambda_{0}}(0)\setminus\{0\}$, which is absurd. Thus we must have $\lambda_{0}=+\infty$, that is,
\begin{equation}\label{2-48}
  u(x)\geq\left(\frac{\lambda}{|x|}\right)^{n-\alpha}u\left(\frac{\lambda^{2}x}{|x|^{2}}\right), \quad\quad \forall \,\, |x|\geq\lambda, \quad \forall \,\, 0<\lambda<+\infty.
\end{equation}
For arbitrary $|x|\geq1$, let $\lambda:=\sqrt{|x|}$, then \eqref{2-48} yields that
\begin{equation}\label{2-49}
  u(x)\geq\frac{1}{|x|^{\frac{n-\alpha}{2}}}u\left(\frac{x}{|x|}\right),
\end{equation}
and hence, we arrive at the following lower bound estimate:
\begin{equation}\label{2-50}
  u(x)\geq\left(\min_{x\in S_{1}}u(x)\right)\frac{1}{|x|^{\frac{n-\alpha}{2}}}:=\frac{C_{0}}{|x|^{\frac{n-\alpha}{2}}}, \quad\quad \forall \,\, |x|\geq1.
\end{equation}

The lower bound estimate \eqref{2-50} can be improved remarkably for $0<p<\frac{n+\alpha+2a}{n-\alpha}$ using the ``Bootstrap" iteration technique and the integral equation \eqref{IE}.

In fact, let $\mu_{0}:=\frac{n-\alpha}{2}$, we infer from the integral equation \eqref{IE} and \eqref{2-50} that, for $|x|\geq1$,
\begin{eqnarray}\label{2-51}
  u(x)&\geq&C\int_{2|x|\leq|y|\leq4|x|}\frac{1}{|x-y|^{n-\alpha}|y|^{p\mu_{0}-a}}dy \\
  \nonumber &\geq&\frac{C}{|x|^{n-\alpha}}\int_{2|x|\leq|y|\leq4|x|}\frac{1}{|y|^{p\mu_{0}-a}}dy \\
  \nonumber &\geq&\frac{C}{|x|^{n-\alpha}}\int^{4|x|}_{2|x|}r^{n-1-p\mu_{0}+a}dr \\
 \nonumber &\geq&\frac{C_{1}}{|x|^{p\mu_{0}-(a+\alpha)}}.
\end{eqnarray}
Let $\mu_{1}:=p\mu_{0}-(a+\alpha)$. Due to $0<p<\frac{n+\alpha+2a}{n-\alpha}$, our important observation is
\begin{equation}\label{2-52}
  \mu_{1}:=p\mu_{0}-(a+\alpha)<\mu_{0}.
\end{equation}
Thus we have obtained a better lower bound estimate than \eqref{2-50} after one iteration, that is,
\begin{equation}\label{2-53}
  u(x)\geq\frac{C_{1}}{|x|^{\mu_{1}}}, \quad\quad \forall \,\, |x|\geq1.
\end{equation}

For $k=0,1,2,\cdots$, define
\begin{equation}\label{2-54}
  \mu_{k+1}:=p\mu_{k}-(a+\alpha).
\end{equation}
Since $0<p<\frac{n+\alpha+2a}{n-\alpha}$, it is easy to see that the sequence $\{\mu_{k}\}$ is monotone decreasing with respect to $k$ and $n-p\mu_{k}+a>0$ for any $k=0,1,2,\cdots$. Continuing the above iteration process involving the integral equation \eqref{IE}, we have the following lower bound estimates for every $k=0,1,2,\cdots$,
\begin{equation}\label{2-55}
  u(x)\geq\frac{C_{k}}{|x|^{\mu_{k}}}, \quad\quad \forall \,\, |x|\geq1.
\end{equation}
Now Theorem \ref{lower0} follows easily from the obvious properties that as $k\rightarrow+\infty$,
\begin{equation}\label{2-56}
  \mu_{k}\rightarrow-\frac{a+\alpha}{1-p} \quad \text{if} \,\, 0<p<1;
  \quad\quad \mu_{k}\rightarrow-\infty \quad \text{if} \,\, 1\leq p<\frac{n+\alpha+2a}{n-\alpha}.
\end{equation}
This finishes our proof of Theorem \ref{lower0}.
\end{proof}

We have proved the nontrivial nonnegative solution $u$ to \eqref{PDE} is actually a positive solution which also satisfies the integral equation \eqref{IE}. For $0<p<\frac{n+\alpha+2a}{n-\alpha}$, the lower bound estimates in Theorem \ref{lower0} contradicts with the following integrability
\begin{equation}\label{2-57}
  C\int_{\mathbb{R}^{n}}\frac{u^{p}(x)}{|x|^{n-\alpha-a}}dx=u(0)<+\infty
\end{equation}
indicated by the integral equation \eqref{IE}. Therefore, we must have $u\equiv0$ in $\mathbb{R}^{n}$, that is, the unique nonnegative solution to PDE \eqref{PDE} is $u\equiv0$ in $\mathbb{R}^{n}$.

This concludes the proof of Theorem \ref{Thm0}.

\section{Proof of Theorem \ref{Thm1}}
In this section, we will prove Theorem \ref{Thm1} by applying contradiction arguments and the \emph{method of scaling spheres in integral forms}. Now suppose on the contrary that $u\geq0$ satisfies integral equations \eqref{IE1} but $u$ is not identically zero, then there exists some $\bar{x}\in\mathbb{R}^{n}$ such that $u(\bar{x})>0$. It follows from \eqref{IE1} immediately that
\begin{equation}\label{3-0-2}
  u(x)>0, \,\,\,\,\,\,\, \forall \,\, x\in\mathbb{R}^{n},
\end{equation}
i.e., $u$ is actually a positive solution in $\mathbb{R}^{n}$. Moreover, there exists a constant $C>0$, such that the solution $u$ satisfies the following lower bound:
\begin{equation}\label{3-1}
  u(x)\geq\frac{C}{|x|^{n-2m}} \,\,\,\,\,\,\,\,\,\,\,\,\, \text{for} \,\,\, |x|\geq1.
\end{equation}
Indeed, since $u>0$ satisfies the integral equation \eqref{IE1}, we can infer that
\begin{eqnarray}\label{3-2}
  u(x)&\geq& C_{n,m}\int_{|y|\leq\frac{1}{2}}\frac{|y|^{a}}{|x-y|^{n-2m}}u^{p}(y)dy\\
 \nonumber &\geq& \frac{C}{|x|^{n-2m}}\int_{|y|\leq\frac{1}{2}}|y|^{a}u^{p}(y)dy=:\frac{C}{|x|^{n-2m}}
\end{eqnarray}
for all $|x|\geq1$.

Next, we will apply the \emph{method of scaling spheres in integral forms} to show the following lower bound estimates for positive solution $u$, which contradict with the integral equations \eqref{IE1} for $0<p<\frac{n+2m+2a}{n-2m}$.
\begin{thm}\label{lower1}
Assume $\alpha=2m$ with $1\leq m<\frac{n}{2}$, $-2m<a<+\infty$, $0<p<\frac{n+2m+2a}{n-2m}$. Suppose $u$ is a positive solution to \eqref{IE1}, then it satisfies the following lower bound estimates: for $|x|\geq1$,
\begin{equation}\label{lb1-1}
  u(x)\geq C_{\kappa}|x|^{\kappa} \quad\quad \forall \, \kappa<\frac{2m+a}{1-p}, \quad\quad \text{if} \,\,\,\, 0<p<1;
\end{equation}
\begin{equation}\label{lb2-1}
  u(x)\geq C_{\kappa}|x|^{\kappa} \quad\quad \forall \, \kappa<+\infty, \quad\quad \text{if} \,\,\,\, 1\leq p<\frac{n+2m+2a}{n-2m}.
\end{equation}
\end{thm}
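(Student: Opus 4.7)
The plan is to parallel the three-step scaling spheres argument used in the proof of Theorem \ref{lower0}, but to replace the pointwise Narrow region principle (Theorem \ref{NRP1}) by its integral counterpart based on the Hardy-Littlewood-Sobolev (HLS) inequality. I define the Kelvin transform $u_{\lambda}(x)=(\lambda/|x|)^{n-2m}u(\lambda^{2}x/|x|^{2})$ and $\omega^{\lambda}:=u_{\lambda}-u$. A direct change of variable in \eqref{IE1} shows that $u_{\lambda}$ satisfies the same integral equation with $|y|^{a}$ replaced by $|y|^{a}(\lambda/|y|)^{\tau}$, where $\tau:=n+2m+2a-p(n-2m)>0$. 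Splitting both representations for $u(x)$ and $u_{\lambda}(x)$ over $B_{\lambda}(0)$ and its complement and using the reflection identity $|x^{\lambda}-y|=(\lambda/|y|)\,|x-y^{\lambda}|$ yields a formula for $\omega^{\lambda}$ on $B_{\lambda}(0)\setminus\{0\}$ as an integral over $B_{\lambda}(0)$ against a non-negative kernel difference, analogous to \eqref{2-37}. The mean value theorem then produces, on the negative set $B^{-}_{\lambda}$, the integral inequality
\[
-\omega^{\lambda}(x)\leq C\int_{B^{-}_{\lambda}}\frac{|y|^{a}\,\psi_{\lambda}^{p-1}(y)}{|x-y|^{n-2m}}\bigl(-\omega^{\lambda}(y)\bigr)\,dy,
\]
where $\psi_{\lambda}$ lies between $u$ and $u_{\lambda}$.

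To start the spheres at small $\lambda$, I invoke the lower bound \eqref{3-1} exactly as in \eqref{2-25}--\eqref{2-26} to obtain $\omega^{\lambda}\geq 1$ on $\overline{B_{\theta\lambda}}\setminus\{0\}$ for $\lambda$ sufficiently small, with the same $\theta\in(0,1)$. On the remaining thin region $A_{\lambda,(1-\theta)\lambda}\cap B^{-}_{\lambda}$, combining the integral inequality above with HLS and H\"older yields
\[
\|\omega^{\lambda}\|_{L^{\frac{2n}{n-2m}}(B^{-}_{\lambda})}\leq C\,\bigl\|\,|\cdot|^{a}\psi_{\lambda}^{p-1}\bigr\|_{L^{\frac{n}{2m}}(B^{-}_{\lambda})}\|\omega^{\lambda}\|_{L^{\frac{2n}{n-2m}}(B^{-}_{\lambda})}.
\]
For $p\geq 1$, the local $L^{n/(2m)}$-integrability of $|y|^{a}u^{p-1}$ (cf.\ Remark \ref{rem3}) shrinks the coefficient to zero as the Lebesgue measure of $B^{-}_{\lambda}\subset B_{\lambda}(0)$ vanishes with $\lambda$; for $0<p<1$, the symmetric bound $\psi_{\lambda}^{p-1}\leq u_{\lambda}^{p-1}$ on $B^{-}_{\lambda}$ together with the positivity of $u_{\lambda}$ away from $0$ plays the same role. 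Hence $B^{-}_{\lambda}=\emptyset$ for every sufficiently small $\lambda$.

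Defining $\lambda_{0}$ as in \eqref{2-29} and arguing by contradiction, suppose $0<\lambda_{0}<+\infty$. Either $\omega^{\lambda_{0}}\equiv 0$ on $B_{\lambda_{0}}\setminus\{0\}$, which contradicts itself via the strict inequality $(\lambda_{0}/|y|)^{\tau}>1$ inside $B_{\lambda_{0}}$ exactly as in \eqref{2-47}; or $\omega^{\lambda_{0}}>0$ throughout $B_{\lambda_{0}}\setminus\{0\}$ by the strong form of the integral identity. In the second case, uniform continuity of $u$ on compact sets transfers the uniform positivity $\omega^{\lambda_{0}}\geq m_{0}$ from $\overline{B_{\lambda_{0}-r_{0}}}\setminus\{0\}$ to $\omega^{\lambda}\geq m_{0}/2$ for $\lambda\in[\lambda_{0},\lambda_{0}+\varepsilon_{1}]$, and the same HLS-H\"older estimate applied on the thin annulus $A_{\lambda,\lambda-\lambda_{0}+r_{0}}$ (whose $L^{n/(2m)}$-mass of $|y|^{a}\psi_{\lambda}^{p-1}$ is small) yields $B^{-}_{\lambda}=\emptyset$, contradicting maximality of $\lambda_{0}$. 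So $\lambda_{0}=+\infty$, and choosing $\lambda=\sqrt{|x|}$ gives the initial estimate $u(x)\geq C_{0}|x|^{-(n-2m)/2}$ for $|x|\geq 1$.

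The bootstrap is a verbatim transcription of \eqref{2-51}--\eqref{2-56}: feeding $u(x)\geq C|x|^{-\mu_{k}}$ into \eqref{IE1} and integrating over the dyadic annulus $\{2|x|\leq|y|\leq 4|x|\}$ upgrades the bound to $u(x)\geq C|x|^{-\mu_{k+1}}$ with $\mu_{k+1}:=p\mu_{k}-(2m+a)$. The condition $p<(n+2m+2a)/(n-2m)$ ensures $\mu_{k+1}<\mu_{k}$ and $n-p\mu_{k}+a>0$ at each step, so $\mu_{k}$ decreases strictly to $-(2m+a)/(1-p)$ when $0<p<1$ and to $-\infty$ when $1\leq p<p_{c}(a)$, giving the two stated estimates. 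The main obstacle is the integral narrow region step: one must verify the local $L^{n/(2m)}$-integrability of $|y|^{a}\psi_{\lambda}^{p-1}$ \emph{uniformly in $\lambda$ on thin annuli}, which for $p\geq 1$ follows from Remark \ref{rem3} but for $0<p<1$ requires swapping the roles of $u$ and $u_{\lambda}$ in the comparison bounding $\psi_{\lambda}^{p-1}$. Once this integral analogue of Theorem \ref{NRP1} is in place, the scaling-spheres machinery of Section 2 applies with only cosmetic changes.
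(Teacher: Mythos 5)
Your proposal is correct and follows essentially the same route as the paper's Section~3: Kelvin transform, the integral representation of $\omega^{\lambda}$ over $B_{\lambda}(0)$ with a nonnegative kernel difference, an HLS\,+\,H\"older contraction estimate of the form $\|\omega^{\lambda}\|_{L^{q}(B^{-}_{\lambda})}\le C\||x|^{a}\max\{u^{p-1},u_{\lambda}^{p-1}\}\|_{L^{n/2m}(B^{-}_{\lambda})}\|\omega^{\lambda}\|_{L^{q}(B^{-}_{\lambda})}$ to start and to continue the spheres, the strict-inequality contradiction at a finite $\lambda_{0}$, and then the dyadic bootstrap. The one cosmetic divergence is that you import the $B_{\theta\lambda}$/annulus split from the direct method of Section~2 to show $\omega^{\lambda}\ge 1$ on the inner ball before invoking HLS on the thin annulus; the paper dispenses with this split entirely, since the $L^{n/2m}$-norm of $|x|^{a}\max\{u^{p-1},u_{\lambda}^{p-1}\}$ over all of $B_{\lambda}(0)$ already tends to $0$ as $\lambda\to 0$ (using $a>-2m$ and the uniform positive lower bound of $u_{\lambda}$ coming from \eqref{3-1}). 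Both variants are valid; the paper's is slightly leaner, and your mean-value intermediary $\psi_{\lambda}^{p-1}$ is controlled by $\max\{u^{p-1},u_{\lambda}^{p-1}\}$ in exactly the way you indicate (by $u^{p-1}$ when $p\ge 1$, by $u_{\lambda}^{p-1}$ when $0<p<1$), so the integral Narrow-region step goes through as you claim.
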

\begin{proof}
Given any $\lambda>0$, we first define the Kelvin transform of a function $u:\,\mathbb{R}^{n}\rightarrow\mathbb{R}$ centered at $0$ by
\begin{equation}\label{Kelvin1}
  u_{\lambda}(x)=\left(\frac{\lambda}{|x|}\right)^{n-2m}u\left(\frac{\lambda^{2}x}{|x|^{2}}\right)
\end{equation}
for arbitrary $x\in\mathbb{R}^{n}\setminus\{0\}$. It's obvious that the Kelvin transform $u_{\lambda}$ may have singularity at $0$ and $\lim_{|x|\rightarrow\infty}|x|^{n-2m}u_{\lambda}(x)=\lambda^{n-2m}u(0)>0$. By \eqref{Kelvin1}, one can infer from the regularity assumptions on $u$ that $u_{\lambda}\in C(\mathbb{R}^{n}\setminus\{0\})$.

Next, we will carry out the process of scaling spheres with respect to the origin $0\in\mathbb{R}^{n}$.

Let $\lambda>0$ be an arbitrary positive real number and let
\begin{equation}\label{3-1-0}
  \omega^{\lambda}(x):=u_{\lambda}(x)-u(x)
\end{equation}
for any $x\in B_{\lambda}(0)\setminus\{0\}$. By the definition of $u_{\lambda}$ and $\omega^{\lambda}$, we have
\begin{equation}\label{3-6}
\omega^{\lambda}(x)=u_{\lambda}(x)-u(x)=-\left(\frac{\lambda}{|x|}\right)^{n-2m}\omega^{\lambda}(x^{\lambda})=-\big(\omega^{\lambda}\big)_{\lambda}(x)
\end{equation}
for every $x\in B_{\lambda}(0)\setminus\{0\}$.

We will first show that, for $\lambda>0$ sufficiently small,
\begin{equation}\label{3-7}
  \omega^{\lambda}(x)\geq0, \,\,\,\,\,\, \forall \,\, x\in B_{\lambda}(0)\setminus\{0\}.
\end{equation}
Then, we start dilating the sphere $S_{\lambda}$ from near the origin $0$ outward as long as \eqref{3-7} holds, until its limiting position $\lambda=+\infty$ and derive lower bound estimates on $u$. Therefore, the scaling sphere process can be divided into two steps.

\emph{Step 1. Start dilating the sphere from near $\lambda=0$.} Define
\begin{equation}\label{3-8}
  B^{-}_{\lambda}:=\{x\in B_{\lambda}(0)\setminus\{0\} \, | \, \omega^{\lambda}(x)<0\}.
\end{equation}
We will show that, for $\lambda>0$ sufficiently small,
\begin{equation}\label{3-9}
  B^{-}_{\lambda}=\emptyset.
\end{equation}

Since $u$ is a positive solution to integral equations \eqref{IE1}, through direct calculations, we get
\begin{equation}\label{3-10}
  u(x)=C\int_{B_{\lambda}(0)}\frac{|y|^{a}}{|x-y|^{n-2m}}u^{p}(y)dy+C\int_{B_{\lambda}(0)}\frac{|y|^{a}}{\left|\frac{|y|}{\lambda}x-\frac{\lambda}{|y|}y\right|^{n-2m}}
  \left(\frac{\lambda}{|y|}\right)^{\tau}u_{\lambda}^{p}(y)dy
\end{equation}
for any $x\in\mathbb{R}^{n}$, where $\tau:=n+2m+2a-p(n-2m)>0$. By direct calculations, one can also verify that $u_{\lambda}$ satisfies the following integral equation
\begin{equation}\label{3-11}
  u_{\lambda}(x)=C\int_{\mathbb{R}^{n}}\frac{|y|^{a}}{|x-y|^{n-2m}}\left(\frac{\lambda}{|y|}\right)^{\tau}u_{\lambda}^{p}(y)dy
\end{equation}
for any $x\in\mathbb{R}^{n}\setminus\{0\}$, and hence, it follows immediately that
\begin{equation}\label{3-12}
  u_{\lambda}(x)=C\int_{B_{\lambda}(0)}\frac{|y|^{a}}{\left|\frac{|y|}{\lambda}x-\frac{\lambda}{|y|}y\right|^{n-2m}}u^{p}(y)dy
  +C\int_{B_{\lambda}(0)}\frac{|y|^{a}}{|x-y|^{n-2m}}
  \left(\frac{\lambda}{|y|}\right)^{\tau}u_{\lambda}^{p}(y)dy.
\end{equation}
From the integral equations \eqref{3-10} and \eqref{3-12}, one can derive that, for any $x\in B_{\lambda}^{-}$,
\begin{eqnarray}\label{3-13}
  &&0>\omega^{\lambda}(x)=u_{\lambda}(x)-u(x) \\
 \nonumber &=&C\int_{B_{\lambda}(0)}\Bigg(\frac{|y|^{a}}{|x-y|^{n-2m}}-\frac{|y|^{a}}{\left|\frac{|y|}{\lambda}x-\frac{\lambda}{|y|}y\right|^{n-2m}}\Bigg) \left(\left(\frac{\lambda}{|y|}\right)^{\tau}u_{\lambda}^{p}(y)-u^{p}(y)\right)dy\\
\nonumber &>& C\int_{B_{\lambda}^{-}}\Bigg(\frac{|y|^{a}}{|x-y|^{n-2m}}-\frac{|y|^{a}}{\left|\frac{|y|}{\lambda}x-\frac{\lambda}{|y|}y\right|^{n-2m}}\Bigg)
\max\left\{u^{p-1}(y),u_{\lambda}^{p-1}(y)\right\}\omega^{\lambda}(y)dy\\
\nonumber &\geq& C\int_{B_{\lambda}^{-}}\frac{|y|^{a}}{|x-y|^{n-2m}}\max\left\{u^{p-1}(y),u_{\lambda}^{p-1}(y)\right\}\omega^{\lambda}(y)dy.
\end{eqnarray}

Now we need the following Hardy-Littlewood-Sobolev inequality.
\begin{lem}\label{HL}(Hardy-Littlewood-Sobolev inequality)
Let $n\geq1$, $0<s<n$ and $1<p<q<\infty$ be such that $\frac{n}{q}=\frac{n}{p}-s$. Then we have
\begin{equation}\label{HLS}
  \Big\|\int_{\mathbb{R}^{n}}\frac{f(y)}{|x-y|^{n-s}}dy\Big\|_{L^{q}(\mathbb{R}^{n})}\leq C_{n,s,p,q}\|f\|_{L^{p}(\mathbb{R}^{n})}
\end{equation}
for all $f\in L^{p}(\mathbb{R}^{n})$.
\end{lem}

By Hardy-Littlewood-Sobolev inequality and \eqref{3-13}, we have, for any $\frac{n}{n-2m}<q<\infty$,
\begin{eqnarray}\label{3-14}
  \|\omega^{\lambda}\|_{L^{q}(B^{-}_{\lambda})}&\leq& C\left\||x|^{a}\max\left\{u^{p-1},u_{\lambda}^{p-1}\right\}\omega^{\lambda}\right\|_{L^{\frac{nq}{n+2mq}}(B^{-}_{\lambda})}\\
  \nonumber &\leq& C\left\||x|^{a}\max\left\{u^{p-1},u_{\lambda}^{p-1}\right\}\right\|_{L^{\frac{n}{2m}}(B^{-}_{\lambda})}\|\omega^{\lambda}\|_{L^{q}(B^{-}_{\lambda})}.
\end{eqnarray}
Since \eqref{3-2} implies $u_{\lambda}$ has positive lower bound near the origin $0$ and $a>-2m$, there exists a $\epsilon_{0}>0$ small enough, such that
\begin{equation}\label{3-15}
  C\left\||x|^{a}\max\left\{u^{p-1},u_{\lambda}^{p-1}\right\}\right\|_{L^{\frac{n}{2m}}(B^{-}_{\lambda})}\leq\frac{1}{2}
\end{equation}
for all $0<\lambda\leq\epsilon_{0}$, and hence \eqref{3-14} implies
\begin{equation}\label{3-16}
  \|\omega^{\lambda}\|_{L^{q}(B^{-}_{\lambda})}=0,
\end{equation}
which means $B^{-}_{\lambda}=\emptyset$. Therefore, we have proved for all $0<\lambda\leq\epsilon_{0}$, $B^{-}_{\lambda}=\emptyset$, that is,
\begin{equation}\label{3-17}
  \omega^{\lambda}(x)\geq0, \,\,\,\,\,\,\, \forall \, x\in B_{\lambda}(0)\setminus\{0\}.
\end{equation}
This completes Step 1.

\emph{Step 2. Dilate the sphere $S_{\lambda}$ outward until $\lambda=+\infty$ to derive lower bound estimates on $u$.} Step 1 provides us a start point to dilate the sphere $S_{\lambda}$ from near $\lambda=0$. Now we dilate the sphere $S_{\lambda}$ outward as long as \eqref{3-7} holds. Let
\begin{equation}\label{3-18}
  \lambda_{0}:=\sup\{\lambda>0\,|\, \omega^{\mu}\geq0 \,\, in \,\, B_{\mu}(0)\setminus\{0\}, \,\, \forall \, 0<\mu\leq\lambda\}\in(0,+\infty],
\end{equation}
and hence, one has
\begin{equation}\label{3-19}
  \omega^{\lambda_{0}}(x)\geq0, \quad\quad \forall \,\, x\in B_{\lambda_{0}}(0)\setminus\{0\}.
\end{equation}
In what follows, we will prove $\lambda_{0}=+\infty$ by contradiction arguments.

Suppose on contrary that $0<\lambda_{0}<+\infty$. In order to get a contradiction, we will first prove
\begin{equation}\label{3-20}
  \omega^{\lambda_{0}}(x)\equiv0, \,\,\,\,\,\,\forall \, x\in B_{\lambda_{0}}(0)\setminus\{0\}
\end{equation}
by using contradiction arguments.

Suppose on contrary that \eqref{3-20} does not hold, that is, $\omega^{\lambda_{0}}\geq0$ but $\omega^{\lambda_{0}}$ is not identically zero in $B_{\lambda_{0}}(0)\setminus\{0\}$, then there exists a $x^{0}\in B_{\lambda_{0}}(0)\setminus\{0\}$ such that $\omega^{\lambda_{0}}(x^{0})>0$. We will obtain a contradiction with \eqref{3-18} via showing that the sphere $S_{\lambda}$ can be dilated outward a little bit further, more precisely, there exists a $\varepsilon>0$ small enough such that $\omega^{\lambda}\geq0$ in $B_{\lambda}(0)\setminus\{0\}$ for all $\lambda\in[\lambda_{0},\lambda_{0}+\varepsilon]$.

For that purpose, we will first show that
\begin{equation}\label{3-21}
  \omega^{\lambda_{0}}(x)>0, \,\,\,\,\,\, \forall \, x\in B_{\lambda_{0}}(0)\setminus\{0\}.
\end{equation}
Indeed, since we have assumed there exists a point $x^{0}\in B_{\lambda_{0}}(0)\setminus\{0\}$ such that $\omega^{\lambda_{0}}(x^{0})>0$, by continuity, there exists a small $\delta>0$ and a constant $c_{0}>0$ such that
\begin{equation}\label{3-22}
B_{\delta}(x^{0})\subset B_{\lambda_{0}}(0)\setminus\{0\} \,\,\,\,\,\, \text{and} \,\,\,\,\,\,
\omega^{\lambda_{0}}(x)\geq c_{0}>0, \,\,\,\,\,\,\,\, \forall \, x\in B_{\delta}(x^{0}).
\end{equation}
From \eqref{3-22} and the integral equations \eqref{3-10} and \eqref{3-12}, one can derive that, for any $x\in B_{\lambda_{0}}(0)\setminus\{0\}$,
\begin{eqnarray}\label{3-23}
  &&\omega^{\lambda_{0}}(x)=u_{\lambda_{0}}(x)-u(x) \\
 \nonumber &=&C\int_{B_{\lambda_{0}}(0)}\Bigg(\frac{|y|^{a}}{|x-y|^{n-2m}}-\frac{|y|^{a}}{\left|\frac{|y|}{\lambda_{0}}x-\frac{\lambda_{0}}{|y|}y\right|^{n-2m}}\Bigg) \left(\left(\frac{\lambda_{0}}{|y|}\right)^{\tau}u_{\lambda_{0}}^{p}(y)-u^{p}(y)\right)dy\\
 \nonumber &>& C\int_{B_{\lambda_{0}}(0)}\Bigg(\frac{|y|^{a}}{|x-y|^{n-2m}}-\frac{|y|^{a}}{\left|\frac{|y|}{\lambda_{0}}x-\frac{\lambda_{0}}{|y|}y\right|^{n-2m}}\Bigg) \left(u_{\lambda_{0}}^{p}(y)-u^{p}(y)\right)dy\\
 \nonumber &\geq& C\int_{B_{\lambda_{0}}(0)}\Bigg(\frac{|y|^{a}}{|x-y|^{n-2m}}-\frac{|y|^{a}}{\left|\frac{|y|}{\lambda_{0}}x-\frac{\lambda_{0}}{|y|}y\right|^{n-2m}}\Bigg)
\min\left\{u^{p-1}(y),u_{\lambda_{0}}^{p-1}(y)\right\}\omega^{\lambda_{0}}(y)dy\\
\nonumber &\geq& C\int_{B_{\delta}(x^{0})}\Bigg(\frac{|y|^{a}}{|x-y|^{n-2m}}-\frac{|y|^{a}}{\left|\frac{|y|}{\lambda_{0}}x-\frac{\lambda_{0}}{|y|}y\right|^{n-2m}}\Bigg)
\min\left\{u^{p-1}(y),u_{\lambda_{0}}^{p-1}(y)\right\}\omega^{\lambda_{0}}(y)dy>0,
\end{eqnarray}
thus we arrive at \eqref{3-21}. Furthermore, \eqref{3-23} also implies that there exists a $0<\eta<\lambda_{0}$ small enough such that, for any $x\in \overline{B_{\eta}(0)}\setminus\{0\}$,
\begin{equation}\label{3-24}
  \omega^{\lambda_{0}}(x)>c_{4}+C\int_{B_{\frac{\delta}{2}}(x^{0})}c_{3}^{a}\,c_{2}\,c_{1}^{p-1}c_{0} \, dy=:\widetilde{c}_{0}>0.
\end{equation}

We fixed $0<r_{0}<\frac{1}{2}\lambda_{0}$ small enough, such that
\begin{equation}\label{3-25}
  C\left\||x|^{a}\max\left\{u^{p-1},u_{\lambda}^{p-1}\right\}\right\|_{L^{\frac{n}{2m}}(A_{\lambda_{0}+r_{0},2r_{0}})}\leq\frac{1}{2}
\end{equation}
for any $\lambda\in[\lambda_{0},\lambda_{0}+r_{0}]$, where the constant $C$ is the same as in \eqref{3-14} and the narrow region
\begin{equation}\label{3-26}
  A_{\lambda_{0}+r_{0},2r_{0}}:=\left\{x\in B_{\lambda_{0}+r_{0}}(0)\,|\,|x|>\lambda_{0}-r_{0}\right\}.
\end{equation}
By \eqref{3-13}, one can easily verify that inequality as \eqref{3-14} (with the same constant $C$) also holds for any $\lambda\in[\lambda_{0},\lambda_{0}+r_{0}]$, that is, for any $\frac{n}{n-2m}<q<\infty$,
\begin{equation}\label{3-27}
  \|\omega^{\lambda}\|_{L^{q}(B^{-}_{\lambda})}\leq C\left\||x|^{a}\max\left\{u^{p-1},u_{\lambda}^{p-1}\right\}\right\|_{L^{\frac{n}{2m}}(B^{-}_{\lambda})}\|\omega^{\lambda}\|_{L^{q}(B^{-}_{\lambda})}.
\end{equation}
From \eqref{3-21} and \eqref{3-24}, we can infer that
\begin{equation}\label{3-28}
  m_{0}:=\inf_{x\in\overline{B_{\lambda_{0}-r_{0}}(0)}\setminus\{0\}}\omega^{\lambda_{0}}(x)>0.
\end{equation}
Since $u$ is uniformly continuous on arbitrary compact set $K\subset\mathbb{R}^{n}$ (say, $K=\overline{B_{4\lambda_{0}}(0)}$), we can deduce from \eqref{3-28} that, there exists a $0<\varepsilon_{2}<r_{0}$ sufficiently small, such that, for any $\lambda\in[\lambda_{0},\lambda_{0}+\varepsilon_{2}]$,
\begin{equation}\label{3-29}
  \omega^{\lambda}(x)\geq\frac{m_{0}}{2}>0, \,\,\,\,\,\, \forall \, x\in\overline{B_{\lambda_{0}-r_{0}}(0)}\setminus\{0\}.
\end{equation}
The proof of \eqref{3-29} is entirely similar to that of \eqref{2-41}, so we omit the details.

By \eqref{3-29}, we know that for any $\lambda\in[\lambda_{0},\lambda_{0}+\varepsilon_{2}]$,
\begin{equation}\label{3-30}
  B_{\lambda}^{-}\subset A_{\lambda_{0}+r_{0},2r_{0}},
\end{equation}
and hence, estimates \eqref{3-25} and \eqref{3-27} yields
\begin{equation}\label{3-31}
  \|\omega^{\lambda}\|_{L^{q}(B^{-}_{\lambda})}=0.
\end{equation}
Therefore, for any $\lambda\in[\lambda_{0},\lambda_{0}+\varepsilon_{2}]$, we deduce from \eqref{3-31} that, $B^{-}_{\lambda}=\emptyset$, that is,
\begin{equation}\label{3-32}
  \omega^{\lambda}(x)\geq0, \,\,\,\,\,\,\, \forall \,\, x\in B_{\lambda}(0)\setminus\{0\},
\end{equation}
which contradicts with the definition \eqref{3-18} of $\lambda_{0}$. As a consequence, in the case $0<\lambda_{0}<+\infty$, \eqref{3-20} must hold true, that is,
\begin{equation}\label{3-33}
  \omega^{\lambda_{0}}\equiv0 \,\,\,\,\,\, \text{in} \,\,\, B_{\lambda_{0}}(0)\setminus\{0\}.
\end{equation}

However, by the second equality in \eqref{3-23} and \eqref{3-33}, we arrive at
\begin{eqnarray}\label{3-34}
 && 0=\omega^{\lambda_{0}}(x)=u_{\lambda_{0}}(x)-u(x)\\
 \nonumber &=&C\int_{B_{\lambda_{0}}(0)}\Bigg(\frac{|y|^{a}}{|x-y|^{n-2m}}-\frac{|y|^{a}}{\left|\frac{|y|}{\lambda_{0}}x-\frac{\lambda_{0}}{|y|}y\right|^{n-2m}}\Bigg) \left(\left(\frac{\lambda_{0}}{|y|}\right)^{\tau}-1\right)u^{p}(y)dy>0
\end{eqnarray}
for any $x\in B_{\lambda_{0}}(0)\setminus\{0\}$, which is absurd. Thus we must have $\lambda_{0}=+\infty$, that is,
\begin{equation}\label{3-35}
  u(x)\geq\left(\frac{\lambda}{|x|}\right)^{n-2m}u\left(\frac{\lambda^{2}x}{|x|^{2}}\right), \quad\quad \forall \,\, |x|\geq\lambda, \quad \forall \,\, 0<\lambda<+\infty.
\end{equation}
For arbitrary $|x|\geq1$, let $\lambda:=\sqrt{|x|}$, then \eqref{3-35} yields that
\begin{equation}\label{3-36}
  u(x)\geq\frac{1}{|x|^{\frac{n-2m}{2}}}u\left(\frac{x}{|x|}\right),
\end{equation}
and hence, we arrive at the following lower bound estimate:
\begin{equation}\label{3-37}
  u(x)\geq\left(\min_{x\in S_{1}}u(x)\right)\frac{1}{|x|^{\frac{n-2m}{2}}}:=\frac{C_{0}}{|x|^{\frac{n-2m}{2}}}, \quad\quad \forall \,\, |x|\geq1.
\end{equation}

The lower bound estimate \eqref{3-37} can be improved remarkably for $0<p<\frac{n+2m+2a}{n-2m}$ using the ``Bootstrap" iteration technique and the integral equations \eqref{IE1}.

In fact, let $\mu_{0}:=\frac{n-2m}{2}$, we infer from the integral equations \eqref{IE1} and \eqref{3-37} that, for $|x|\geq1$,
\begin{eqnarray}\label{3-38}
  u(x)&\geq&C\int_{2|x|\leq|y|\leq4|x|}\frac{1}{|x-y|^{n-2m}|y|^{p\mu_{0}-a}}dy \\
  \nonumber &\geq&\frac{C}{|x|^{n-2m}}\int_{2|x|\leq|y|\leq4|x|}\frac{1}{|y|^{p\mu_{0}-a}}dy \\
  \nonumber &\geq&\frac{C}{|x|^{n-2m}}\int^{4|x|}_{2|x|}r^{n-1-p\mu_{0}+a}dr \\
  \nonumber &\geq&\frac{C_{1}}{|x|^{p\mu_{0}-(a+2m)}}.
\end{eqnarray}
Let $\mu_{1}:=p\mu_{0}-(a+2m)$. Due to $0<p<\frac{n+2m+2a}{n-2m}$, our important observation is
\begin{equation}\label{3-39}
  \mu_{1}:=p\mu_{0}-(a+2m)<\mu_{0}.
\end{equation}
Thus we have obtained a better lower bound estimate than \eqref{3-37} after one iteration, that is,
\begin{equation}\label{3-40}
  u(x)\geq\frac{C_{1}}{|x|^{\mu_{1}}}, \quad\quad \forall \,\, |x|\geq1.
\end{equation}

For $k=0,1,2,\cdots$, define
\begin{equation}\label{3-41}
  \mu_{k+1}:=p\mu_{k}-(a+2m).
\end{equation}
Since $0<p<\frac{n+2m+2a}{n-2m}$, it is easy to see that the sequence $\{\mu_{k}\}$ is monotone decreasing with respect to $k$ and $n-p\mu_{k}+a>0$ for any $k=0,1,2,\cdots$. Continuing the above iteration process involving the integral equation \eqref{IE1}, we have the following lower bound estimates for every $k=0,1,2,\cdots$,
\begin{equation}\label{3-42}
  u(x)\geq\frac{C_{k}}{|x|^{\mu_{k}}}, \quad\quad \forall \,\, |x|\geq1.
\end{equation}
Now Theorem \ref{lower1} follows easily from the obvious properties that as $k\rightarrow+\infty$,
\begin{equation}\label{3-43}
 \mu_{k}\rightarrow-\frac{a+2m}{1-p} \quad \text{if} \,\, 0<p<1;
  \quad\quad \mu_{k}\rightarrow-\infty \quad \text{if} \,\, 1\leq p<\frac{n+2m+2a}{n-2m}.
\end{equation}
This finishes our proof of Theorem \ref{lower1}.
\end{proof}

We have proved the nontrivial nonnegative solution $u$ to integral equations \eqref{IE1} is actually a positive solution. For $0<p<\frac{n+2m+2a}{n-2m}$, the lower bound estimates in Theorem \ref{lower1} contradicts with the following integrability
\begin{equation}\label{3-44}
  C\int_{\mathbb{R}^{n}}\frac{u^{p}(x)}{|x|^{n-2m-a}}dx=u(0)<+\infty
\end{equation}
indicated by the integral equations \eqref{IE1}. Therefore, we must have $u\equiv0$ in $\mathbb{R}^{n}$, that is, the unique nonnegative solution to IEs \eqref{IE1} is $u\equiv0$ in $\mathbb{R}^{n}$.

This concludes the proof of Theorem \ref{Thm1}.

\section{Proof of Theorem \ref{Thm2}}
In this section, we will prove Theorem \ref{Thm2} via contradiction arguments and the \emph{(direct) method of scaling spheres}. Now suppose on the contrary that $u\geq0$ satisfies equation \eqref{PDE+} but $u$ is not identically zero, then there exists some $\bar{x}\in\mathbb{R}^{n}_{+}$ such that $u(\bar{x})>0$.

\subsection{Equivalence between PDE and IE}
In order to get a contradiction, we first need to show that the solution $u$ to PDE \eqref{PDE+} also satisfies the equivalent integral equation
\begin{equation}\label{IE+}
  u(x)=\int_{\mathbb{R}^{n}_{+}}G^{+}(x,y)|y|^{a}u^{p}(y)dy,
\end{equation}
where Green's function associated with $(-\Delta)^{\frac{\alpha}{2}}$ on $\mathbb{R}^{n}_{+}$ is given by
\begin{equation}\label{Green}
  G^{+}(x,y):=\frac{C_{n,\alpha}}{|x-y|^{n-\alpha}}\int^{\frac{4x_{n}y_{n}}{|x-y|^{2}}}_{0}\frac{b^{\frac{\alpha}{2}-1}}{(1+b)^{\frac{n}{2}}}db.
\end{equation}
\begin{thm}\label{equivalent+}
Assume $n>\alpha$, $0<\alpha\leq2$, $-\alpha<a<+\infty$ and $1\leq p<\infty$. Suppose $u$ is nonnegative solution to \eqref{PDE+}, then it also solves the integral equation \eqref{IE+}, and vice versa.
\end{thm}
\begin{proof}
First, assume $u$ is a nonnegative solutions to PDE \eqref{PDE+}, our goal is to show that $u$ also satisfies IE \eqref{IE+}. For arbitrary $R>0$, let $P_R:=(0,\cdots,0,R)$ and
\begin{equation}\label{4-2}
\tilde{u}_R(x)=\int_{B_R(P_R)}G_R^+(x,y)|y|^{a}u^{p}(y)dy,
\end{equation}
where Green's function $G_R^+$ for $(-\Delta)^{\frac{\alpha}{2}}$ on $B_R(P_R)$ is given by
\begin{equation}\label{4-3}
  G_R^+(x,y):=\frac{C_{n,\alpha}}{|x-y|^{n-\alpha}}\int^{\frac{t_{R}}{s_{R}}}_{0}\frac{b^{\frac{\alpha}{2}-1}}{(1+b)^{\frac{n}{2}}}db,
\end{equation}
where $t_{R}:=\left(1-\frac{|x-P_{R}|^{2}}{R^{2}}\right)\left(1-\frac{|y-P_{R}|^{2}}{R^{2}}\right)$ and $s_{R}:=\frac{|x-y|^{2}}{R^{2}}$.

Then, we can derive
\begin{equation}\label{4-4}\\\begin{cases}
(-\Delta)^{\frac{\alpha}{2}}\tilde{u}_R(x)=|x|^{a}u^{p}(x),\quad x\in B_R(P_R),\\
\tilde{u}_R(x)=0,\ \ \ \ \ \ \ x\notin B_R(P_R).
\end{cases}\end{equation}
Let $U_R(x)=u(x)-\tilde{u}_R(x)$, by PDE \eqref{PDE+}, we have
\begin{equation}\label{4-5}\\\begin{cases}
(-\Delta)^{\frac{\alpha}{2}}U_R(x)=0,\, \, \, \,\,\, x\in B_R(P_R),\\
U_R(x)\geq0, \quad\,\,\, x\notin B_R(P_R).
\end{cases}\end{equation}
By Lemma \ref{max} and maximum principles, for any $x\in B_R(P_R)$, we deduce
\begin{equation}\label{4-6}
  U_R(x)\geq0.
\end{equation}
Letting $R\rightarrow\infty$, we have
\begin{equation}\label{4-7}
u(x)\geq\int_{\mathbb{R}^n_+}G^+(x,y)|y|^{a}u^{p}(y)dy=:\tilde{u}(x).
\end{equation}
One can observe that $\tilde{u}$ is a solution of
\begin{equation}\label{4-8}\\\begin{cases}
(-\Delta)^{\frac{\alpha}{2}}\tilde{u}(x)=|x|^{a}u^{p}(x),  x\in \mathbb{R}^n_+,\\
\tilde{u}(x)=0,\ \ \ \ \ \ x\notin \mathbb{R}^n_+.
\end{cases}\end{equation}
Define $U:=u-\tilde{u}$, we have
\begin{equation}\label{4-9}\\\begin{cases}
(-\Delta)^{\frac{\alpha}{2}}U(x)=0,\ \ U(x)\geq 0,\ \ x\in \mathbb{R}^n_+,\\
U(x)=0,\ \ \ \ \ \ \ \ \ \ \ \ \ \ \ \ \ \ x\notin \mathbb{R}^n_+.
\end{cases}\end{equation}

Now we need the following Lemma from \cite{CLZC} on Liouville theorem for $\alpha$-harmonic functions on $\mathbb{R}^{n}_{+}$.
\begin{lem}\label{lemmas3}(\cite{CLZC})
Assume $0<\alpha<2$, $u\in L^{\infty}_{loc}(\overline{\mathbb{R}^{n}_{+}})\cap\mathcal{L}_{\alpha}(\mathbb{R}^{n})$ satisfies the following equation in the sense of distribution:
\begin{equation}\label{4-0}\\\begin{cases}
(-\Delta)^{\frac{\alpha}{2}}u(x)=0,\ \ u(x)\geq 0, \ \ x\in \mathbb{R}^n_+,\\
u(x)\equiv0, \ \ \ \ \ \ \ \ \ \ \ \ \ \ \ \ \ \ \ \ \ \ \ x\notin \mathbb{R}^n_+.
\end{cases}\end{equation}
Then, either $u\equiv 0$ or
\begin{equation}\label{4-1}\\\begin{cases}
u(x)=C{x_n}^{\frac{\alpha}{2}}, \ \ x\in \mathbb{R}^n_+,\\
u(x)=0, \ \ \ \ \ \ \ \ \ x\notin \mathbb{R}^n_+,
\end{cases}\end{equation}
for some positive constant $C>0$.
\end{lem}

The Liouville theorem for harmonic functions $u\in C(\overline{\mathbb{R}^{n}_{+}})$ in $\mathbb{R}^{n}_{+}$ with $u=0$ on $\partial\mathbb{R}^{n}_{+}$ (i.e., Lemma \ref{lemmas3} with $\alpha=2$) also holds (see e.g. \cite{D}). It can also be proved by using the Kelvin transform, asymptotic harmonic expansions in conjunction with the method of moving planes, we omit the details here. For generalizations to Liouville theorems for poly-harmonic functions on $\mathbb{R}^{n}_{+}$ with Navier boundary conditions, please refer to \cite{DQ2}.

From Lemma \ref{lemmas3} and Liouville theorem for harmonic functions on $\mathbb{R}^{n}_{+}$, we can deduce that either
\begin{equation}\label{eq2-1+}
  U(x)=0, \ \ \forall x\in \mathbb{R}^n,
\end{equation}
or there exists a positive constant $C_0$ such that
\begin{equation}\label{eq2-2+}\\\begin{cases}
U(x)=C_0{x_n}^{\frac{\alpha}{2}}, \ \ \ \ \ \ x\in \mathbb{R}^n_+,\\
U(x)=0, \ \ \ \ \ x\notin \mathbb{R}^n_+.
\end{cases}\end{equation}

We can obtain a contradiction in the second case by deriving a lower bound estimates of Green's function $G^+$. In fact, for each fixed $x\in\mathbb{R}^n_+$, by \eqref{Green}, there exists a $R_{x}>0$ sufficiently large, such that, for any $|y|\geq R$, one can derive
\begin{equation}\label{eq2-3+}\begin{split}
G^+(x,y)&=\frac{C_{n,\alpha}}{|x-y|^{n-\alpha}}\int^{\frac{4x_{n}y_{n}}{|x-y|^{2}}}_{0}\frac{b^{\frac{\alpha}{2}-1}}{(1+b)^{\frac{n}{2}}}db\\
&\geq \frac{C}{|x-y|^{n-\alpha}}\int^{\frac{4x_{n}y_{n}}{|x-y|^{2}}}_{0}b^{\frac{\alpha}{2}-1}db\\
&\geq \frac{C}{|x-y|^{n-\alpha}}\left(\frac{x_{n}y_{n}}{|x-y|^{2}}\right)^{\frac{\alpha}{2}}\\
&\geq \frac{C(x_{n}y_{n})^{\frac{\alpha}{2}}}{|x-y|^{n}}.
\end{split}\end{equation}
Therefore, for each fixed $x=(0,\cdots,0,x_{n})\in\mathbb{R}^n_+$, we get from \eqref{eq2-3+} and $a>-\alpha$ that
\begin{equation}\label{eq2-4+}\\\begin{split}
u(x)\geq \tilde{u}(x)&=\int_{\mathbb{R}_+^n}G^+(x,y)|y|^{a}u^p(y)dy\\
&\geq C_{0}^{p}\int_{\mathbb{R}_+^n}G^+(x,y)|y|^{a}(y_n^{\frac{\alpha}{2}})^p dy\\
&\geq C_1x_{n}^{\frac{\alpha}{2}}\int_{\mathbb{R}_+^n\setminus{B_R(0)}}\frac{y_n^\frac{\alpha(p+1)}{2}|y|^{a}}{|x-y|^n}dy\\
&\geq C_2x_{n}^{\frac{\alpha}{2}}\int_R^\infty y_n^\frac{\alpha(p+1)}{2}\int_R^\infty \frac{r^{n-2}(r^2+y_n^2)^{\frac{a}{2}}}{(r^2+|x_n-y_n|^2)^{\frac{n}{2}}}drdy_n\\
&\geq C_{3}x_{n}^{\frac{\alpha}{2}}\int_R^\infty y_n^{\frac{\alpha(p+1)}{2}+a-1}\int_1^\infty \frac{s^{n-2}}{(s^2+1)^{\frac{n-a}{2}}}dsdy_n=+\infty,
\end{split}\end{equation}
which is absurd. This implies that the second case \eqref{eq2-2+} can not happen. Therefore, we can derive from \eqref{eq2-1+} that
\begin{equation}\label{eq2-5+}
  u(x)=\tilde{u}(x)=\int_{\mathbb{R}^n_+}G^+(x,y)|y|^{a}u^p(y)dy,
\end{equation}
that is, $u$ also satisfies the integral equation \eqref{IE+}.

Conversely, assume that $u$ is a nonnegative classical solution of integral equation \eqref{IE+} on $\mathbb{R}^{n}_{+}$, then
\begin{eqnarray}\label{2e25+}
(-\Delta)^{\frac{\alpha}{2}}u(x)
\nonumber &=& \int_{\mathbb{R}^{n}_{+}}{\left[(-\Delta)^{\frac{\alpha}{2}}G^{+}(x,y)\right]}|y|^{a}u^{p}(y)dy
\\
\nonumber &=& \int_{\mathbb{R}^{n}_{+}}\delta(x-y)|y|^{a}u^{p}(y)dy=|x|^{a}u^{p}(x),
\end{eqnarray}
that is, $u$ also solves the PDE \eqref{PDE+}. This completes the proof of equivalence between PDE \eqref{PDE+} and IE \eqref{IE+}.
\end{proof}

Since the nonnegative solution $u$ with $u(\bar{x})>0$ also satisfy the integral equation \eqref{IE+}, it is actually a positive solution in $\mathbb{R}^{n}_{+}$, that is,
\begin{equation}\label{2-0+}
  u(x)>0 \quad\quad\quad \text{in} \,\, \mathbb{R}^{n}_{+}.
\end{equation}
Moreover, there exist a constant $C>0$, such that the solution $u$ satisfies the following lower bound:
\begin{equation}\label{2-1+}
  u(x)\geq C\frac{x_{n}^{\frac{\alpha}{2}}}{|x|^{n}} \,\,\,\,\,\,\,\,\,\,\,\,\, \text{for} \,\,\,\,|x|\geq1, \,\,\,\, x\in\mathbb{R}^{n}_{+}.
\end{equation}
Indeed, since $u>0$ also satisfy the integral equation \eqref{IE+}, we can deduce that
\begin{eqnarray}\label{2-2+}
  u(x)&=& C_{n,\alpha}\int_{\mathbb{R}^{n}_{+}}\frac{|y|^{a}}{|x-y|^{n-\alpha}}\left(\int^{\frac{4x_{n}y_{n}}{|x-y|^{2}}}_{0}\frac{b^{\frac{\alpha}{2}-1}}{(1+b)^{\frac{n}{2}}}db\right)
  u^{p}(y)dy\\
 \nonumber &\geq& C\int_{\mathbb{R}^{n}_{+}\cap\{|y|\leq\frac{1}{2}\}}\frac{|y|^{a}}{|x-y|^{n-\alpha}}
 \left(\int^{\frac{4x_{n}y_{n}}{|x-y|^{2}}}_{0}b^{\frac{\alpha}{2}-1}db\right)u^{p}(y)dy \\
 \nonumber &\geq& C\int_{\mathbb{R}^{n}_{+}\cap\{|y|\leq\frac{1}{2}\}}\frac{|y|^{a}}{|x-y|^{n-\alpha}}
 \left(\frac{4x_{n}y_{n}}{|x-y|^{2}}\right)^{\frac{\alpha}{2}}u^{p}(y)dy \\
 \nonumber &\geq& C\frac{x_{n}^{\frac{\alpha}{2}}}{|x|^{n}}\int_{\mathbb{R}^{n}_{+}\cap\{|y|\leq\frac{1}{2}\}}y_{n}^{\frac{\alpha}{2}}|y|^{a}u^{p}(y)dy
 =:C\frac{x_{n}^{\frac{\alpha}{2}}}{|x|^{n}}
\end{eqnarray}
for all $|x|\geq1$ and $x\in\mathbb{R}^{n}_{+}$.

\subsection{The method of scaling spheres}

In this subsection, we will apply the \emph{(direct) method of scaling spheres} to show the following lower bound estimates for positive solution $u$, which contradict with the integral equation \eqref{IE+} for $1\leq p<\frac{n+\alpha+2a}{n-\alpha}$.
\begin{thm}\label{lower2}
Assume $n>\alpha$, $0<\alpha\leq2$, $-\alpha<a<+\infty$ and $1\leq p<\frac{n+\alpha+2a}{n-\alpha}$. Suppose $u$ is a positive solution to \eqref{PDE+}, then it satisfies the following lower bound estimates: for all $x\in\mathbb{R}^{n}_{+}$ satisfying $|x|\geq1$ and $x_{n}\geq\frac{|x|}{\sqrt{n}}$,
\begin{equation}\label{lb2+}
  u(x)\geq C_{\kappa}|x|^{\kappa} \quad\quad \forall \, \kappa<+\infty.
\end{equation}
\end{thm}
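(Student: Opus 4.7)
The strategy is to adapt the three-step method of scaling spheres used for Theorem \ref{lower0} to the half space, exploiting the fact that $0\in\partial\mathbb{R}^{n}_{+}$, so that the Kelvin transform $u_{\lambda}(x)=(\lambda/|x|)^{n-\alpha}u(\lambda^{2}x/|x|^{2})$ maps $\mathbb{R}^{n}_{+}$ to itself and vanishes outside $\overline{\mathbb{R}^{n}_{+}}$. Writing $\omega^{\lambda}=u_{\lambda}-u$ on $B_{\lambda}(0)\cap\mathbb{R}^{n}_{+}\setminus\{0\}$, one has $\omega^{\lambda}\equiv 0$ on $\partial\mathbb{R}^{n}_{+}\cap B_{\lambda}(0)\setminus\{0\}$, the antisymmetry $\omega^{\lambda}(x^{\lambda})=-(|x|/\lambda)^{n-\alpha}\omega^{\lambda}(x)$ in the exterior, and the same differential inequality $(-\Delta)^{\alpha/2}\omega^{\lambda}\geq c_{\lambda}(x)\omega^{\lambda}$ as in \eqref{2-10}--\eqref{2-11} on the set where $\omega^{\lambda}<0$, with $\tau:=n+\alpha+2a-p(n-\alpha)>0$.

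The first step is to start the scaling at small $\lambda$. Using the pointwise bound $u(x)\geq Cx_{n}^{\alpha/2}/|x|^{n}$ from \eqref{2-1+}, one checks that $\omega^{\lambda}$ dominates a positive constant on $B_{\theta\lambda}(0)\cap\mathbb{R}^{n}_{+}$ for a suitable $\theta\in(0,1)$, so that any negative minimum of $\omega^{\lambda}$ must lie in the narrow region $A_{\lambda,(1-\theta)\lambda}\cap\mathbb{R}^{n}_{+}$; the half-space Narrow region principle (Theorem \ref{NRP1+}, which is the source of the auxiliary restriction $a>-1$ when $\alpha=2$) then excludes such a minimum for all sufficiently small $\lambda$. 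The second step is to dilate the sphere until $\lambda_{0}:=\sup\{\lambda>0:\omega^{\mu}\geq 0\text{ on }B_{\mu}\cap\mathbb{R}^{n}_{+}\setminus\{0\}\text{ for all }0<\mu\leq\lambda\}$ reaches $+\infty$. If $\lambda_{0}<+\infty$, the integral representation \eqref{IE+} together with the half-space Kelvin-type identity for $G^{+}$ yields, in analogy with \eqref{2-37},
\[
\omega^{\lambda_{0}}(x)=\int_{B_{\lambda_{0}}\cap\mathbb{R}^{n}_{+}}\bigl[G^{+}(x,y)-(\lambda_{0}/|y|)^{n-\alpha}G^{+}(x,y^{\lambda_{0}})\bigr]\bigl((\lambda_{0}/|y|)^{\tau}u_{\lambda_{0}}^{p}(y)-u^{p}(y)\bigr)|y|^{a}dy.
\]
If $\omega^{\lambda_{0}}\not\equiv 0$, this identity forces $\omega^{\lambda_{0}}>0$ strictly in $B_{\lambda_{0}}\cap\mathbb{R}^{n}_{+}$; uniform continuity of $u$ plus a second invocation of Theorem \ref{NRP1+} on a thin annulus beyond $\lambda_{0}$ then contradicts the maximality of $\lambda_{0}$. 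If instead $\omega^{\lambda_{0}}\equiv 0$, the integrand above is strictly positive because $\tau>0$ and $u>0$, again a contradiction.

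Once $\lambda_{0}=+\infty$, the reflection inequality $u(x)\geq(\lambda/|x|)^{n-\alpha}u(\lambda^{2}x/|x|^{2})$ holds for all $|x|\geq\lambda>0$. Choosing $\lambda=\sqrt{|x|}$ and restricting to the cone $\mathcal{C}=\{x_{n}\geq|x|/\sqrt{n},\,|x|\geq 1\}$, the projection $x/|x|$ lies in a compact subset of $\mathbb{R}^{n}_{+}\cap S_{1}$ on which $u$ has a positive lower bound, giving the base estimate $u(x)\geq C_{0}|x|^{-\mu_{0}}$ on $\mathcal{C}$ with $\mu_{0}=(n-\alpha)/2$. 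For $x,y\in\mathcal{C}$ with $|y|\sim|x|$, the ratio $4x_{n}y_{n}/|x-y|^{2}$ is bounded below by a positive constant, so the explicit formula \eqref{Green} gives $G^{+}(x,y)\geq C|x-y|^{\alpha-n}$. Plugging the inductive bound $u(y)\geq C_{k}|y|^{-\mu_{k}}$ into \eqref{IE+} and integrating over $\{y\in\mathcal{C}:2|x|\leq|y|\leq 4|x|\}$ produces $u(x)\geq C_{k+1}|x|^{-\mu_{k+1}}$ with $\mu_{k+1}=p\mu_{k}-(\alpha+a)$, exactly as in \eqref{2-51}--\eqref{2-55}; since $1\leq p<(n+\alpha+2a)/(n-\alpha)$, one gets $\mu_{k}\to-\infty$, which is the desired conclusion.

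The main obstacle is the second step, namely promoting the small-$\lambda$ start to $\lambda_{0}=+\infty$ in the presence of the boundary. The two delicate ingredients are: (i) the Narrow region principle on an annular region touching $\partial\mathbb{R}^{n}_{+}$, which forces the additional assumption $a>-1$ when $\alpha=2$ (an artifact of the direct PDE argument, removed in Remark \ref{rem9} by switching to the integral formulation), and (ii) the strict comparison $G^{+}(x,y)>(\lambda_{0}/|y|)^{n-\alpha}G^{+}(x,y^{\lambda_{0}})$ inside $B_{\lambda_{0}}\cap\mathbb{R}^{n}_{+}$, which follows from the monotonicity of $t\mapsto\int_{0}^{t}b^{\alpha/2-1}(1+b)^{-n/2}db$ together with a direct verification that the associated ratio of $t$-values exceeds one.
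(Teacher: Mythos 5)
Your proposal follows the same general architecture as the paper (Kelvin transform centered at $0\in\partial\mathbb{R}^{n}_{+}$, scaling spheres with a narrow-region principle, and the bootstrap iteration), and Steps~2 and~3 are essentially correct in outline. There is, however, a genuine gap in your Step~1.

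You claim that ``$\omega^{\lambda}$ dominates a positive constant on $B_{\theta\lambda}(0)\cap\mathbb{R}^{n}_{+}$ for a suitable $\theta\in(0,1)$'', so that a negative minimum is forced into the outer annulus $A_{\lambda,(1-\theta)\lambda}\cap\mathbb{R}^{n}_{+}$. This is false: both $u$ and $u_{\lambda}$ vanish continuously on $\partial\mathbb{R}^{n}_{+}$, so $\omega^{\lambda}(x)\to 0$ as $x_{n}\to 0^{+}$ for \emph{any} $|x|<\theta\lambda$. In fact, from \eqref{2-1+} one only gets $u_{\lambda}(x)\geq C\lambda^{-n}x_{n}^{\alpha/2}$ near $0$, so $\omega^{\lambda}(x)\geq C\lambda^{-n}x_{n}^{\alpha/2}-\sup_{B_{1}}u$ is \emph{negative} on a thin slab $\{x_{n}\lesssim\lambda^{2n/\alpha}\}$ inside $B_{\theta\lambda}$. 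Thus a negative minimum of $\omega^{\lambda}$ could perfectly well occur near the flat boundary, outside your chosen annulus, and your invocation of the narrow-region principle on $A_{\lambda,(1-\theta)\lambda}\cap\mathbb{R}^{n}_{+}$ alone would not exclude it.

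This is precisely why the paper's half-space narrow-region set $A^{+}_{\lambda,l_{1},l_{2}}$ is the union of an outer annular shell $\{\lambda-l_{1}<|x|<\lambda\}$ \emph{and} a boundary strip $\{0<x_{n}<l_{2}\}$, and why Theorem~\ref{NRP1+}(i) handles the near-boundary case via the estimate $(-\Delta)^{\alpha/2}\omega^{\lambda}(\tilde{x})\leq C(\tilde{x}_{n})^{-\alpha}\omega^{\lambda}(\tilde{x})$, which combined with $c_{\lambda}(\tilde{x})=p|\tilde{x}|^{a}u^{p-1}(\tilde{x})$ and $a+\alpha>0$ gives a contradiction for small $\lambda$. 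Concretely, the paper's Step~1 simply takes $l_{1}=\lambda$ and $l_{2}=0$ (so the ``narrow region'' is the whole half-ball) and applies Theorem~\ref{NRP1+}(i); no preliminary lower bound for $\omega^{\lambda}$ on a subball is needed. You should either adopt that direct route, or, if you wish to keep a $\theta$-type decomposition, you must enlarge the narrow region to include the boundary strip $\{0<x_{n}<l_{2}\}$ and establish the positive lower bound for $\omega^{\lambda}$ only on $B_{\theta\lambda}\cap\{x_{n}>l_{2}\}$, not on all of $B_{\theta\lambda}\cap\mathbb{R}^{n}_{+}$.

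One smaller imprecision: in the iteration you should note, as the paper does, that the Green's function lower bound $G^{+}(x,y)\geq C|x-y|^{\alpha-n}$ is obtained by restricting both $x$ and $y$ to the cone $\{y_{n}\geq|y|/\sqrt{n}\}$ with $|y|\sim|x|$, so that $4x_{n}y_{n}/|x-y|^{2}$ is bounded below (cf.\ \eqref{2-51+}); you gestured at this but it is the crux of why the iteration closes inside the cone and not elsewhere.
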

\begin{proof}
Given any $\lambda>0$, we first define the Kelvin transform of a function $u:\,\mathbb{R}^{n}\rightarrow\mathbb{R}$ centered at $0$ by
\begin{equation}\label{Kelvin+}
  u_{\lambda}(x)=\left(\frac{\lambda}{|x|}\right)^{n-\alpha}u\left(\frac{\lambda^{2}x}{|x|^{2}}\right)
\end{equation}
for arbitrary $x\in\mathbb{R}^{n}\setminus\{0\}$. It's obvious that the Kelvin transform $u_{\lambda}$ may have singularity at $0$ and $\lim_{|x|\rightarrow\infty}|x|^{n-\alpha}u_{\lambda}(x)=\lambda^{n-\alpha}u(0)=0$. By \eqref{Kelvin+}, one can infer from the regularity assumptions on $u$ that $u_{\lambda}\in\mathcal{L}_{\alpha}(\mathbb{R}^{n})\cap C^{1,1}_{loc}(\mathbb{R}^{n}_{+})\cap C(\overline{\mathbb{R}^{n}_{+}}\setminus\{0\})$ if $0<\alpha<2$ and $u_{\lambda}\in C^{2}(\mathbb{R}^{n}_{+})\cap C(\overline{\mathbb{R}^{n}_{+}}\setminus\{0\})$ if $\alpha=2$. Furthermore, we can deduce from \eqref{PDE+} and \eqref{Kelvin+} that
\begin{equation}\label{2-3+}
   (-\Delta)^{\frac{\alpha}{2}}u_{\lambda}(x)=\left(\frac{\lambda}{|x|}\right)^{n+\alpha}\frac{\lambda^{2a}}{|x|^{a}}u^{p}\left(\frac{\lambda^{2}x}{|x|^{2}}\right)
   =\left(\frac{\lambda}{|x|}\right)^{\tau}|x|^{a}u_{\lambda}^{p}(x), \,\,\,\,\,\,\,\,\, x\in\mathbb{R}^{n}_{+},
\end{equation}
where $\tau:=n+\alpha+2a-p(n-\alpha)>0$.

Next, we will carry out the process of scaling spheres in $\overline{\mathbb{R}^{n}_{+}}$ with respect to the origin $0\in\mathbb{R}^{n}$. For this purpose, we need some definitions.

Let $\lambda>0$ be an arbitrary positive real number and let the scaling half sphere be
\begin{equation}\label{2-4+}
  S^{+}_{\lambda}:=\{x\in\overline{\mathbb{R}^{n}_{+}}:\, |x|=\lambda\}.
\end{equation}
We define the reflection of $x$ about the half sphere $S^{+}_{\lambda}$ by $x^{\lambda}:=\frac{\lambda^{2}x}{|x|^{2}}$ and define
\begin{equation}\label{2-5+}
  B^{+}_{\lambda}(0):=B_{\lambda}(0)\cap\mathbb{R}^{n}_{+}, \quad\quad \widetilde{B^{+}_{\lambda}}(0):=\{x\in\mathbb{R}^{n}_{+}: \, x^{\lambda}\in B^{+}_{\lambda}(0)\}.
\end{equation}

Let $\omega^{\lambda}(x):=u_{\lambda}(x)-u(x)$ for any $x\in \overline{B^{+}_{\lambda}(0)}\setminus\{0\}$. By the definition of $u_{\lambda}$ and $\omega^{\lambda}$, we have
\begin{equation}\label{2-6+}
\omega^{\lambda}(x)=u_{\lambda}(x)-u(x)=-\left(\frac{\lambda}{|x|}\right)^{n-\alpha}\omega^{\lambda}(x^{\lambda})=-\big(\omega^{\lambda}\big)_{\lambda}(x)
\end{equation}
for every $x\in\overline{B^{+}_{\lambda}(0)}\setminus\{0\}$.

We will first show that, for $\lambda>0$ sufficiently small,
\begin{equation}\label{2-7+}
  \omega^{\lambda}(x)\geq0, \,\,\,\,\,\, \forall \,\, x\in B^{+}_{\lambda}(0).
\end{equation}
Then, we start dilating the half sphere $S^{+}_{\lambda}$ from near the origin $0$ outward as long as \eqref{2-7+} holds, until its limiting position $\lambda=+\infty$ and derive lower bound estimates on $u$ in a cone. Therefore, the scaling sphere process can be divided into two steps.

\emph{Step 1. Start dilating the half sphere $S^{+}_{\lambda}$ from near $\lambda=0$.} Define
\begin{equation}\label{2-8+}
  (B^{+}_{\lambda})^{-}:=\{x\in B^{+}_{\lambda}(0) \, | \, \omega^{\lambda}(x)<0\}.
\end{equation}
We will show through contradiction arguments that, for $\lambda>0$ sufficiently small,
\begin{equation}\label{2-9+}
  (B^{+}_{\lambda})^{-}=\emptyset.
\end{equation}

Suppose \eqref{2-9+} does not hold, that is, $(B^{+}_{\lambda})^{-}\neq\emptyset$ and hence $\omega^{\lambda}$ is negative somewhere in $B^{+}_{\lambda}(0)$. For arbitrary $x\in B^{+}_{\lambda}(0)$, we get from \eqref{2-3+} that
\begin{eqnarray}\label{2-10+}
  (-\Delta)^{\frac{\alpha}{2}}\omega^{\lambda}(x)&=&\left(\frac{\lambda}{|x|}\right)^{\tau}|x|^{a}u_{\lambda}^{p}(x)-|x|^{a}u^{p}(x) \\
 \nonumber &\geq& |x|^{a}\left(u_{\lambda}^{p}(x)-u^{p}(x)\right)=p|x|^{a}\xi_{\lambda}^{p-1}(x)\omega^{\lambda}(x),
\end{eqnarray}
where $\xi_{\lambda}(x)$ is valued between $u(x)$ and $u_{\lambda}(x)$ by mean value theorem. Therefore, for all $x\in (B^{+}_{\lambda})^{-}$,
\begin{equation}\label{2-11+}
  (-\Delta)^{\frac{\alpha}{2}}\omega^{\lambda}(x)\geq p|x|^{a}u^{p-1}(x)\omega^{\lambda}(x)=:c_{\lambda}(x)\omega^{\lambda}(x).
\end{equation}

Now we need the following Theorem on the \emph{Narrow region principle} in $\overline{\mathbb{R}^{n}_{+}}$.
\begin{thm}\label{NRP1+}(Narrow region principle)
Assume $n>\alpha$, $0<\alpha\leq2$, $-\alpha<a<+\infty$ and $1\leq p<+\infty$. Let $\lambda>0$ and $A^{+}_{\lambda,l_{1},l_{2}}:=\{x\in B^{\lambda}_{+}(0)\,|\,|x|>\lambda-l_{1} \,\,\, \text{or} \,\,\, x_{n}<l_{2}\}$ be a narrow region in $\mathbb{R}^{n}_{+}$ with $0\leq l_{1},l_{2}\leq\lambda$ and $l_{1}+l_{2}\leq\lambda$. If $0<\alpha<2$, suppose that $\omega^{\lambda}\in\mathcal{L}_{\alpha}(\mathbb{R}^{n})\cap C^{1,1}_{loc}(A^{+}_{\lambda,l_{1},l_{2}})$ and satisfies
\begin{equation}\label{nrp1+}\\\begin{cases}
(-\Delta)^{\frac{\alpha}{2}}\omega^{\lambda}(x)-c_{\lambda}(x)\omega^{\lambda}(x)\geq0 \,\,\,\,\, \text{in} \,\,\, A^{+}_{\lambda,l_{1},l_{2}}\cap(B^{+}_{\lambda})^{-},\\
\text{negative minimum of} \,\, \omega^{\lambda}\,\, \text{is attained in the interior of}\,\, B^{+}_{\lambda}(0) \,\,\text{if} \,\,\, (B^{+}_{\lambda})^{-}\neq\emptyset,\\
\text{negative minimum of} \,\,\, \omega^{\lambda} \,\,\, \text{cannot be attained in} \,\,\, B^{+}_{\lambda}(0)\setminus A^{+}_{\lambda,l_{1},l_{2}},
\end{cases}\end{equation}
where $c_{\lambda}(x):=p|x|^{a}u^{p-1}(x)$. If $\alpha=2$, assume that $a>-1$, $\omega^{\lambda}\in C^{2}(A^{+}_{\lambda,l_{1},l_{2}})$ and satisfies
\begin{equation}\label{nrp2}\\\begin{cases}
-\Delta\omega^{\lambda}(x)-c_{\lambda}(x)\omega^{\lambda}(x)\geq0 \,\,\,\,\, \text{in} \,\,\, (B^{+}_{\lambda})^{-},\\
(B^{+}_{\lambda})^{-}\subset A^{+}_{\lambda,l_{1},l_{2}} \quad \text{if} \,\,\, (B^{+}_{\lambda})^{-}\neq\emptyset.
\end{cases}\end{equation}
Then, we have \\
(i) there exists a sufficiently small constant $\delta_{0}>0$, such that, for all $0<\lambda\leq\delta_{0}$,
\begin{equation}\label{nrp1-1+}
  \omega^{\lambda}(x)\geq0, \,\,\,\,\,\, \forall \,x\in A^{+}_{\lambda,l_{1},l_{2}};
\end{equation}
(ii) for arbitrarily fixed $\theta\in(0,1)$, there exists a sufficiently small $l_{0}>0$ depending on $\lambda$ continuously, such that, for all $l_{1}$ and $l_{2}$ satisfying $0<l_{1},l_{2}\leq l_{0}$, $l_{1}<\theta\lambda$ and $l_{1}+l_{2}<\lambda$,
\begin{equation}\label{nrp1-2+}
  \omega^{\lambda}(x)\geq0, \,\,\,\,\,\, \forall \,x\in A^{+}_{\lambda,l_{1},l_{2}}.
\end{equation}
\end{thm}
\begin{proof}
For $0<\alpha<2$, the idea of our proof is similar to that of the Narrow region principle in $\mathbb{R}^{n}$ (Theorem 2.2 in \cite{CLZ} and Theorem \ref{NRP1}). The key difference and ingredient is that, since
\begin{eqnarray}\label{narrow}
  A^{+}_{\lambda,l_{1},l_{2}}&=&R^{+}_{\lambda,l_{1},l_{2}}\cup L^{+}_{\lambda,l_{1},l_{2}} \\
 \nonumber &:=&\{x\in B^{\lambda}_{+}(0)\,|\,\lambda-l_{1}<|x|<\lambda\}\cup\{x\in B^{\lambda}_{+}(0)\,|\, 0<x_{n}<l_{2}\},
\end{eqnarray}
we need to discuss two different cases in order to get a contradiction, that is, the negative minimum point $\tilde{x}\in R^{+}_{\lambda,l_{1},l_{2}}\cap(B^{+}_{\lambda})^{-}$ or $\tilde{x}\in L^{+}_{\lambda,l_{1},l_{2}}\cap(B^{+}_{\lambda})^{-}$, respectively. Another main difference is that, we use the following identity
\begin{equation}\label{5-1}
  (-\Delta)^{\frac{\alpha}{2}}\omega^{\lambda}(\tilde{x})=C\left\{\int_{B^{+}_{\lambda}(0)}
  +\int_{\mathbb{R}^{n}_{+}\setminus B^{+}_{\lambda}(0)}
  +\int_{\mathbb{R}^{n}\setminus\mathbb{R}^{n}_{+}}\right\}\frac{\omega^{\lambda}(\tilde{x})-\omega^{\lambda}(y)}{|\tilde{x}-y|^{n+\alpha}}dy
\end{equation}
instead of the identity (2.15) in \cite{CLZ}. Then, by similar calculations as in (2.16) and (2.17) in \cite{CLZ}, we will get the following estimate
\begin{equation}\label{5-2}
  (-\Delta)^{\frac{\alpha}{2}}\omega^{\lambda}(\tilde{x})\leq C\left\{\int_{\mathbb{R}^{n}_{+}\setminus B^{+}_{\lambda}(0)}\frac{1}{|\tilde{x}-y|^{n+\alpha}}dy
  +\int_{\mathbb{R}^{n}\setminus\mathbb{R}^{n}_{+}}\frac{1}{|\tilde{x}-y|^{n+\alpha}}dy\right\}\omega^{\lambda}(\tilde{x})
\end{equation}
instead of (2.18) in \cite{CLZ}.

We first show (i). One can get from \eqref{5-2} that
\begin{equation}\label{5-4}
  (-\Delta)^{\frac{\alpha}{2}}\omega^{\lambda}(\tilde{x})\leq C\omega^{\lambda}(\tilde{x})\int_{\big(\mathbb{R}^{n}\setminus\mathbb{R}^{n}_{+}\big)\cap\big(B_{4\tilde{x}_{n}}(\tilde{x})\setminus B_{\tilde{x}_{n}}(\tilde{x})\big)}\frac{1}{|\tilde{x}-y|^{n+\alpha}}dy\leq \frac{C}{(\tilde{x}_{n})^{\alpha}}\omega^{\lambda}(\tilde{x}).
\end{equation}
Due to $a+\alpha>0$, combining \eqref{5-4} with \eqref{nrp1+} yields that
\begin{equation}\label{5-8}
  0<C\leq(\tilde{x}_{n})^{\alpha}|\tilde{x}|^{a}\leq|\tilde{x}|^{a+\alpha}\leq\lambda^{a+\alpha},
\end{equation}
which will lead immediately to a contradiction if $0<\lambda\leq\delta_{0}$ for some constant $\delta_{0}$ small enough.

Next, we prove (ii). If $\tilde{x}\in R^{+}_{\lambda,l_{1},l_{2}}\cap(B^{+}_{\lambda})^{-}$, then one infers from \eqref{5-2} that
\begin{equation}\label{5-3}
  (-\Delta)^{\frac{\alpha}{2}}\omega^{\lambda}(\tilde{x})\leq C\omega^{\lambda}(\tilde{x})\int_{\big(\mathbb{R}^{n}_{+}\setminus B^{+}_{\lambda}(0)\big)\cap\big(B_{4l_{1}}(\tilde{x})\setminus B_{l_{1}}(\tilde{x})\big)}\frac{1}{|\tilde{x}-y|^{n+\alpha}}dy\leq \frac{C}{l_{1}^{\alpha}}\omega^{\lambda}(\tilde{x}).
\end{equation}
Noting that $(1-\theta)\lambda<\lambda-l_{1}<|x|<\lambda$ for any $x\in R^{+}_{\lambda,l_{1},l_{2}}$, by \eqref{nrp1+} and \eqref{5-3}, we obtain
\begin{equation}\label{eq-a2}
  \frac{C}{l_{1}^{\alpha}}\leq p\max\left\{1,(1-\theta)^{a}\right\}\lambda^{a}M_{\lambda}^{p-1},
\end{equation}
where $M_{\lambda}:=\sup_{B_{\lambda}(0)}u<+\infty$. Consequently, one can derive a contradiction from \eqref{eq-a2} if $0<l_{1}\leq l_{0}$ for some sufficiently small $l_{0}$ depending on $\lambda$ continuously. If $\tilde{x}\in L^{+}_{\lambda,l_{1},l_{2}}\cap(B^{+}_{\lambda})^{-}$, then due to $a+\alpha>0$, combining \eqref{5-4} with \eqref{nrp1+} yields that
\begin{equation}\label{5-8}
  0<C\leq(\tilde{x}_{n})^{\alpha}|\tilde{x}|^{a}\leq\max\left\{l_{2}^{\alpha+a},l_{2}^{\alpha}\lambda^{a}\right\},
\end{equation}
which will lead immediately to a contradiction if $0<l_{2}\leq l_{0}$ for some sufficiently small $l_{0}$ depending on $\lambda$ continuously.

For $\alpha=2$, assume that $(B^{+}_{\lambda})^{-}\neq\emptyset$, then \eqref{nrp2} implies that $(B^{+}_{\lambda})^{-}\subset A^{+}_{\lambda,l_{1},l_{2}}$ and
\begin{equation}\label{eq-a0}\\\begin{cases}
\Delta\left(-\omega^{\lambda}\right)(x)\geq p|x|^{a}u^{p-1}(x)\omega^{\lambda}(x) \qquad \text{in} \,\,\, (B^{+}_{\lambda})^{-},\\
\omega^{\lambda}(x)=0 \qquad \text{on} \,\,\, \partial(B^{+}_{\lambda})^{-}.
\end{cases}\end{equation}
By the Alexandroff-Bakelman-Pucci maximum principle (see e.g. Theorem 9.1 in \cite{GT}), we can infer from \eqref{eq-a0} that
\begin{equation}\label{eq-a1}
  \sup_{(B^{+}_{\lambda})^{-}}\left(-\omega^{\lambda}\right)\leq C\||x|^{a}u^{p-1}\omega^{\lambda}\|_{L^{n}\left((B^{+}_{\lambda})^{-}\right)}
  \leq C\||x|^{a}u^{p-1}\|_{L^{n}\left((B^{+}_{\lambda})^{-}\right)}\sup_{(B^{+}_{\lambda})^{-}}\left(-\omega^{\lambda}\right),
\end{equation}
where the constant $C$ depends only on $n$, $\lambda$ and tends to $0$ as $\lambda\rightarrow0$. Since $a>-1$, if $m\left[(B^{+}_{\lambda})^{-}\right]$ is small enough such that $C\||x|^{a}u^{p-1}\|_{L^{n}\left((B^{+}_{\lambda})^{-}\right)}\leq\frac{1}{2}$, then it follows from \eqref{eq-a1} that $\sup_{(B^{+}_{\lambda})^{-}}\left(-\omega^{\lambda}\right)=0$ and hence $(B^{+}_{\lambda})^{-}=\emptyset$. As a consequence, we have $(B^{+}_{\lambda})^{-}=\emptyset$ provided that $\lambda$ is sufficiently small or $l_{1}$ and $l_{2}$ are small enough.

This finishes the proof of Theorem \ref{NRP1+}.
\end{proof}

For any $0<\lambda\leq\delta_{0}$, let $l_{1}:=\lambda\in[0,\lambda]$ and $l_{2}:=0\in[0,\lambda]$, then it follows from \eqref{2-11+} and
\begin{equation}\label{6-1}
  \liminf_{x\rightarrow0}\omega^{\lambda}(x)\geq0
\end{equation}
that the conditions \eqref{nrp1+} and \eqref{nrp2} in Theorem \ref{NRP1+} are satisfied, hence we can deduce from (i) in Theorem \ref{NRP1+} that
\begin{equation}\label{2-27+}
  \omega^{\lambda}(x)\geq0, \quad\quad \forall \,\, x\in A^{+}_{\lambda,l_{1},l_{2}}.
\end{equation}
Therefore, we have proved for all $0<\lambda\leq\delta_{0}$, $(B^{+}_{\lambda})^{-}=\emptyset$, that is,
\begin{equation}\label{2-28+}
  \omega^{\lambda}(x)\geq0, \,\,\,\,\,\,\, \forall \, x\in B^{+}_{\lambda}(0).
\end{equation}
This completes Step 1.

\emph{Step 2. Dilate the half sphere $S^{+}_{\lambda}$ outward until $\lambda=+\infty$ to derive lower bound estimates on $u$ in a cone.} Step 1 provides us a start point to dilate the half sphere $S^{+}_{\lambda}$ from near $\lambda=0$. Now we dilate the half sphere $S^{+}_{\lambda}$ outward as long as \eqref{2-7+} holds. Let
\begin{equation}\label{2-29+}
  \lambda_{0}:=\sup\{\lambda>0\,|\, \omega^{\mu}\geq0 \,\, in \,\, B^{+}_{\mu}(0), \,\, \forall \, 0<\mu\leq\lambda\}\in(0,+\infty],
\end{equation}
and hence, one has
\begin{equation}\label{2-30+}
  \omega^{\lambda_{0}}(x)\geq0, \quad\quad \forall \,\, x\in B^{+}_{\lambda_{0}}(0).
\end{equation}
In what follows, we will prove $\lambda_{0}=+\infty$ by contradiction arguments.

Suppose on contrary that $0<\lambda_{0}<+\infty$. In order to get a contradiction, we will first prove
\begin{equation}\label{2-31+}
  \omega^{\lambda_{0}}(x)\equiv0, \,\,\,\,\,\,\forall \, x\in B^{+}_{\lambda_{0}}(0)
\end{equation}
by using the \emph{Narrow region principle} (Theorem \ref{NRP1+}) and contradiction arguments.

Suppose on contrary that \eqref{2-31+} does not hold, that is, $\omega^{\lambda_{0}}\geq0$ but $\omega^{\lambda_{0}}$ is not identically zero in $B^{+}_{\lambda_{0}}(0)$, then there exists a $x^{0}\in B^{+}_{\lambda_{0}}(0)$ such that $\omega^{\lambda_{0}}(x^{0})>0$. We will obtain a contradiction with \eqref{2-29+} via showing that the half sphere $S^{+}_{\lambda}$ can be dilated outward a little bit further, more precisely, there exists a $\varepsilon>0$ small enough such that $\omega^{\lambda}\geq0$ in $B^{+}_{\lambda}(0)$ for all $\lambda\in[\lambda_{0},\lambda_{0}+\varepsilon]$.

For that purpose, we will first show that
\begin{equation}\label{2-32+}
  \omega^{\lambda_{0}}(x)>0, \,\,\,\,\,\, \forall \, x\in B^{+}_{\lambda_{0}}(0).
\end{equation}
Indeed, since we have assumed there exists a point $x^{0}\in B^{+}_{\lambda_{0}}(0)$ such that $\omega^{\lambda_{0}}(x^{0})>0$, by continuity, there exists a small $\delta>0$ and a constant $c_{0}>0$ such that
\begin{equation}\label{2-33+}
B_{\delta}(x^{0})\subset B^{+}_{\lambda_{0}}(0) \,\,\,\,\,\, \text{and} \,\,\,\,\,\,
\omega^{\lambda_{0}}(x)\geq c_{0}>0, \,\,\,\,\,\,\,\, \forall \, x\in B_{\delta}(x^{0}).
\end{equation}

Since the positive solution $u$ to \eqref{PDE+} also satisfies the integral equation \eqref{IE+}, through direct calculations, we get
\begin{equation}\label{2-34+}
  u(x)=\int_{B^{+}_{\lambda_{0}}(0)}G^{+}(x,y)|y|^{a}u^{p}(y)dy+\int_{B^{+}_{\lambda_{0}}(0)}G^{+}(x,y^{\lambda_{0}})
  \left(\frac{\lambda_{0}}{|y|}\right)^{\tau+n-\alpha}|y|^{a}u_{\lambda_{0}}^{p}(y)dy
\end{equation}
for any $x\in\mathbb{R}^{n}$, where $\tau:=n+\alpha+2a-p(n-\alpha)>0$. By direct calculations, one can also verify that $u_{\lambda_{0}}$ satisfies the following integral equation
\begin{equation}\label{2-35+}
  u_{\lambda_{0}}(x)=\int_{\mathbb{R}^{n}_{+}}G^{+}(x,y)\left(\frac{\lambda_{0}}{|y|}\right)^{\tau}|y|^{a}u_{\lambda_{0}}^{p}(y)dy
\end{equation}
for any $x\in\mathbb{R}^{n}\setminus\{0\}$, and hence, it follows immediately that
\begin{eqnarray}\label{2-36+}
  u_{\lambda_{0}}(x)&=&\int_{B^{+}_{\lambda_{0}}(0)}G^{+}(x,y^{\lambda_{0}})\left(\frac{\lambda_{0}}{|y|}\right)^{n-\alpha}|y|^{a}u^{p}(y)dy \\
 \nonumber \quad\quad &&+\int_{B^{+}_{\lambda_{0}}(0)}G^{+}(x,y)\left(\frac{\lambda_{0}}{|y|}\right)^{\tau}|y|^{a}u_{\lambda_{0}}^{p}(y)dy.
\end{eqnarray}
Observe that for any $x,y\in B^{+}_{\lambda_{0}}(0)$,
\begin{equation}\label{pointwise}
  |x-y|<\left|\frac{|y|}{\lambda_{0}}x-\frac{\lambda_{0}}{|y|}y\right|,
\end{equation}
and Green's function $G^{+}(x,y)$ is monotone decreasing about the distance $|x-y|$ provided the product $x_{n}y_{n}$ remains unchanged, one easily verifies
\begin{equation}\label{monotone}
  G^{+}(x,y)-\left(\frac{\lambda_{0}}{|y|}\right)^{n-\alpha}G^{+}(x,y^{\lambda_{0}})=G^{+}(x,y)-G^{+}\left(\frac{|y|}{\lambda_{0}}x,\frac{\lambda_{0}}{|y|}y\right)>0
\end{equation}
for any $x,y\in B^{+}_{\lambda_{0}}(0)$. Therefore, from the integral equations \eqref{2-34+} and \eqref{2-36+}, one can derive that, for any $x\in B^{+}_{\lambda_{0}}(0)$,
\begin{eqnarray}\label{2-37+}
  &&\omega^{\lambda_{0}}(x)=u_{\lambda_{0}}(x)-u(x) \\
 \nonumber &=&\int_{B^{+}_{\lambda_{0}}(0)}\Bigg(G^{+}(x,y)-\left(\frac{\lambda_{0}}{|y|}\right)^{n-\alpha}G^{+}(x,y^{\lambda_{0}})\Bigg) |y|^{a}\left(\left(\frac{\lambda_{0}}{|y|}\right)^{\tau}u_{\lambda_{0}}^{p}(y)-u^{p}(y)\right)dy\\
 \nonumber &>&\int_{B^{+}_{\lambda_{0}}(0)}\Bigg(G^{+}(x,y)-\left(\frac{\lambda_{0}}{|y|}\right)^{n-\alpha}G^{+}(x,y^{\lambda_{0}})\Bigg) |y|^{a}\left(u_{\lambda_{0}}^{p}(y)-u^{p}(y)\right)dy\\
\nonumber &\geq&p\int_{B^{+}_{\lambda_{0}}(0)}\Bigg(G^{+}(x,y)-\left(\frac{\lambda_{0}}{|y|}\right)^{n-\alpha}G^{+}(x,y^{\lambda_{0}})\Bigg)
u^{p-1}(y)|y|^{a}\omega^{\lambda_{0}}(y)dy\\
\nonumber &\geq&p\int_{B_{\delta}(x^{0})}\Bigg(G^{+}(x,y)-\left(\frac{\lambda_{0}}{|y|}\right)^{n-\alpha}G^{+}(x,y^{\lambda_{0}})\Bigg)
u^{p-1}(y)|y|^{a}\omega^{\lambda_{0}}(y)dy>0,
\end{eqnarray}
thus we arrive at \eqref{2-32+}.

Now we define
\begin{equation}\label{2-39+}
  \tilde{l}_{0}:=\min_{\lambda\in[\lambda_{0},2\lambda_{0}]}l_{0}(\lambda)>0,
\end{equation}
where $l_{0}(\lambda)$ is given by Theorem \ref{NRP1+}. For a fixed small $0<r_{0}<\frac{1}{4}\min\{\tilde{l}_{0},\lambda_{0}\}$, by \eqref{2-32+}, we can define
\begin{equation}\label{2-40+}
  m_{0}:=\inf_{x\in B^{+}_{\lambda_{0}}(0)\setminus A^{+}_{\lambda_{0},r_{0},r_{0}}}\omega^{\lambda_{0}}(x)>0.
\end{equation}

Since $u$ is uniformly continuous on arbitrary compact set $K\subset\overline{\mathbb{R}^{n}_{+}}$ (say, $K=\overline{B^{+}_{2\lambda_{0}}(0)}$), we can deduce from \eqref{2-40+} that, there exists a $0<\varepsilon_{3}<\frac{1}{4}\min\{\tilde{l}_{0},\lambda_{0}\}$ sufficiently small, such that, for any $\lambda\in[\lambda_{0},\lambda_{0}+\varepsilon_{3}]$,
\begin{equation}\label{2-41+}
  \omega^{\lambda}(x)\geq\frac{m_{0}}{2}>0, \,\,\,\,\,\, \forall \, x\in B^{+}_{\lambda_{0}}(0)\setminus A^{+}_{\lambda_{0},r_{0},r_{0}}.
\end{equation}
The proof of \eqref{2-41+} is completely similar to that of \eqref{2-41}, so we omit the details.

For any $\lambda\in[\lambda_{0},\lambda_{0}+\varepsilon_{3}]$, let $l_{1}:=\lambda-\lambda_{0}+r_{0}\in(0,\tilde{l}_{0})$ and $l_{2}:=r_{0}\in(0,\tilde{l}_{0})$, then it follows from \eqref{2-11+}, \eqref{2-41+} and
\begin{equation}\label{6-2}
  \liminf_{x\rightarrow0}\omega^{\lambda}(x)\geq0
\end{equation}
that the conditions \eqref{nrp1+} and \eqref{nrp2} in Theorem \ref{NRP1+} are satisfied, hence we can deduce from (ii) in Theorem \ref{NRP1+} that
\begin{equation}\label{2-44+}
  \omega^{\lambda}(x)\geq0, \quad\quad \forall \,\, x\in A^{+}_{\lambda,l_{1},l_{2}}.
\end{equation}
Therefore, we get from \eqref{2-41+} and \eqref{2-44+} that, $(B^{+}_{\lambda})^{-}=\emptyset$ for all $\lambda\in[\lambda_{0},\lambda_{0}+\varepsilon_{3}]$, that is,
\begin{equation}\label{2-45+}
  \omega^{\lambda}(x)\geq0, \,\,\,\,\,\,\, \forall \,\, x\in B^{+}_{\lambda}(0),
\end{equation}
which contradicts with the definition \eqref{2-29+} of $\lambda_{0}$. As a consequence, in the case $0<\lambda_{0}<+\infty$, \eqref{2-31+} must hold true, that is,
\begin{equation}\label{2-46+}
  \omega^{\lambda_{0}}\equiv0 \,\,\,\,\,\, \text{in} \,\,\, B^{+}_{\lambda_{0}}(0).
\end{equation}

However, by the second equality in \eqref{2-37+} and \eqref{2-46+}, we arrive at
\begin{eqnarray}\label{2-47+}
 && 0=\omega^{\lambda_{0}}(x)=u_{\lambda_{0}}(x)-u(x)\\
 \nonumber &=&\int_{B^{+}_{\lambda_{0}}(0)}\Bigg(G^{+}(x,y)-\left(\frac{\lambda_{0}}{|y|}\right)^{n-\alpha}G^{+}(x,y^{\lambda_{0}})\Bigg) \left(\left(\frac{\lambda_{0}}{|y|}\right)^{\tau}-1\right)|y|^{a}u^{p}(y)dy>0
\end{eqnarray}
for any $x\in B^{+}_{\lambda_{0}}(0)$, which is absurd. Thus we must have $\lambda_{0}=+\infty$, that is,
\begin{equation}\label{2-48+}
  u(x)\geq\left(\frac{\lambda}{|x|}\right)^{n-\alpha}u\left(\frac{\lambda^{2}x}{|x|^{2}}\right), \quad\quad \forall \,\, |x|\geq\lambda, \quad x\in\mathbb{R}^{n}_{+}, \quad \forall \,\, 0<\lambda<+\infty.
\end{equation}
For arbitrary $|x|\geq1$, $x\in\mathbb{R}^{n}_{+}$, let $\lambda:=\sqrt{|x|}$, then \eqref{2-48+} yields that
\begin{equation}\label{2-49+}
  u(x)\geq\frac{1}{|x|^{\frac{n-\alpha}{2}}}u\left(\frac{x}{|x|}\right),
\end{equation}
and hence, we arrive at the following lower bound estimate:
\begin{equation}\label{2-50+}
  u(x)\geq\left(\min_{x\in S^{+}_{1},\,x_{n}\geq\frac{1}{\sqrt{n}}}u(x)\right)\frac{1}{|x|^{\frac{n-\alpha}{2}}}:=\frac{C_{0}}{|x|^{\frac{n-\alpha}{2}}}, \quad\quad \forall \,\, |x|\geq1, \,\,\, x_{n}\geq\frac{|x|}{\sqrt{n}}.
\end{equation}

The lower bound estimate \eqref{2-50+} can be improved remarkably for $1\leq p<\frac{n+\alpha+2a}{n-\alpha}$ using the ``Bootstrap" iteration technique and the integral equation \eqref{IE+}.

In fact, let $\mu_{0}:=\frac{n-\alpha}{2}$, we infer from the integral equation \eqref{IE+} and \eqref{2-50+} that, for any $|x|\geq1$ and $x_{n}\geq\frac{|x|}{\sqrt{n}}$,
\begin{eqnarray}\label{2-51+}
  u(x)&\geq&C\int_{2|x|\leq|y|\leq4|x|,\,y_{n}\geq\frac{|y|}{\sqrt{n}}}\frac{1}{|x-y|^{n-\alpha}}
  \left(\int_{0}^{\frac{4x_{n}y_{n}}{|x-y|^{2}}}\frac{b^{\frac{\alpha}{2}-1}}{(1+b)^{\frac{n}{2}}}db\right)\frac{1}{|y|^{p\mu_{0}-a}}dy \\
  \nonumber &\geq&\frac{C}{|x|^{n-\alpha}}\int_{2|x|\leq|y|\leq4|x|,\,y_{n}\geq\frac{|y|}{\sqrt{n}}}
  \left(\int_{0}^{\frac{1}{2n}}\frac{b^{\frac{\alpha}{2}-1}}{(1+b)^{\frac{n}{2}}}db\right)\frac{1}{|y|^{p\mu_{0}-a}}dy \\
  \nonumber &\geq&\frac{C}{|x|^{n-\alpha}}\int^{4|x|}_{2|x|}r^{n-1-p\mu_{0}+a}dr \\
  \nonumber &\geq&\frac{C_{1}}{|x|^{p\mu_{0}-(a+\alpha)}}.
\end{eqnarray}
Let $\mu_{1}:=p\mu_{0}-(a+\alpha)$. Due to $1\leq p<\frac{n+\alpha+2a}{n-\alpha}$, our important observation is
\begin{equation}\label{2-52+}
  \mu_{1}:=p\mu_{0}-(a+\alpha)<\mu_{0}.
\end{equation}
Thus we have obtained a better lower bound estimate than \eqref{2-50+} after one iteration, that is,
\begin{equation}\label{2-53+}
  u(x)\geq\frac{C_{1}}{|x|^{\mu_{1}}}, \quad\quad \forall \,\, |x|\geq1, \,\,\, x_{n}\geq\frac{|x|}{\sqrt{n}}.
\end{equation}

For $k=0,1,2,\cdots$, define
\begin{equation}\label{2-54+}
  \mu_{k+1}:=p\mu_{k}-(a+\alpha).
\end{equation}
Since $1\leq p<\frac{n+\alpha+2a}{n-\alpha}$, it is easy to see that the sequence $\{\mu_{k}\}$ is monotone decreasing with respect to $k$ and $n-p\mu_{k}+a>0$ for any $k=0,1,2,\cdots$. Continuing the above iteration process involving the integral equation \eqref{IE+}, we have the following lower bound estimates for every $k=0,1,2,\cdots$,
\begin{equation}\label{2-55+}
  u(x)\geq\frac{C_{k}}{|x|^{\mu_{k}}}, \quad\quad \forall \,\, |x|\geq1, \,\,\, x_{n}\geq\frac{|x|}{\sqrt{n}}.
\end{equation}
Now Theorem \ref{lower2} follows easily from the obvious properties that as $k\rightarrow+\infty$,
\begin{equation}\label{2-56+}
   \mu_{k}\rightarrow-\infty \quad \text{if} \,\, 1\leq p<\frac{n+\alpha+2a}{n-\alpha}.
\end{equation}
This finishes our proof of Theorem \ref{lower2}.
\end{proof}

We have proved the nontrivial nonnegative solution $u$ to \eqref{PDE+} is actually a positive solution which also satisfies the integral equation \eqref{IE+}. For $1\leq p<\frac{n+\alpha+2a}{n-\alpha}$, the lower bound estimates in Theorem \ref{lower2} contradicts with the following integrability indicated by the integral equation \eqref{IE+}, that is,
\begin{eqnarray}\label{2-57+}
  +\infty&>&u(e_{n})\geq \int_{|y|\geq2,\,y_{n}\geq\frac{|y|}{\sqrt{n}}}\frac{C}{|e_{n}-y|^{n-\alpha}}
  \left(\int_{0}^{\frac{4y_{n}}{|e_{n}-y|^{2}}}\frac{b^{\frac{\alpha}{2}-1}}{(1+b)^{\frac{n}{2}}}db\right)|y|^{a}u^{p}(y)dy \\
  \nonumber &\geq&\int_{|y|\geq2,\,y_{n}\geq\frac{|y|}{\sqrt{n}}}\frac{C}{|y|^{n-\alpha}}
  \left(\int_{0}^{\frac{1}{\sqrt{n}|y|}}\frac{b^{\frac{\alpha}{2}-1}}{(1+b)^{\frac{n}{2}}}db\right)|y|^{a}u^{p}(y)dy \\
  \nonumber &\geq&C\int_{|y|\geq2,\,y_{n}\geq\frac{|y|}{\sqrt{n}}}\frac{u^{p}(y)}{|y|^{n-\frac{\alpha}{2}-a}}dy,
\end{eqnarray}
where $e_{n}:=(0,\cdots,0,1)\in\mathbb{R}^{n}_{+}$. Therefore, we must have $u\equiv0$ in $\overline{\mathbb{R}^{n}_{+}}$, that is, the unique nonnegative solution to PDE \eqref{PDE+} is $u\equiv0$ in $\overline{\mathbb{R}^{n}_{+}}$.

This concludes our proof of Theorem \ref{Thm2}.

\section{Proof of Theorem \ref{Thm3}}
In this section, we will prove Theorem \ref{Thm3} via contradiction arguments and the \emph{method of scaling spheres in integral forms}. Now suppose on the contrary that $u\geq0$ satisfies the integral equations \eqref{IE++} but $u$ is not identically zero, then there exists some $\bar{x}\in\mathbb{R}^{n}_{+}$ such that $u(\bar{x})>0$. It follows immediately from \eqref{IE++} that
\begin{equation}\label{8-2}
  u(x)>0, \,\,\,\,\,\,\, \forall \,\, x\in\mathbb{R}^{n}_{+},
\end{equation}
i.e., $u$ is actually a positive solution to IEs \eqref{IE++} on $\mathbb{R}^{n}_{+}$. Furthermore, there exists a constant $C>0$, such that the solution $u$ satisfies the following lower bound:
\begin{equation}\label{2-1++}
  u(x)\geq C\frac{x_{n}}{|x|^{n-2m+2}} \,\,\,\,\,\,\,\,\,\,\,\,\, \text{for} \,\,\,\,|x|\geq1, \,\,\,\, x\in\mathbb{R}^{n}_{+}.
\end{equation}
Indeed, since $u>0$ satisfies the integral equations \eqref{IE++} on $\mathbb{R}^{n}_{+}$, we can deduce that
\begin{eqnarray}\label{2-2++}
  u(x)&=& C_{n,m}\int_{\mathbb{R}^{n}_{+}}\left(\frac{1}{|x-y|^{n-2m}}-\frac{1}{|\bar{x}-y|^{n-2m}}\right)(y_{n})^{a}u^{p}(y)dy\\
 \nonumber &\geq& C\int_{\mathbb{R}^{n}_{+}\cap\{|y|\leq\frac{1}{2}\}}\frac{x_{n}y_{n}}{|x-y|^{n-2m+2}}(y_{n})^{a}u^{p}(y)dy \\
 \nonumber &\geq& C\frac{x_{n}}{|x|^{n-2m+2}}\int_{\mathbb{R}^{n}_{+}\cap\{|y|\leq\frac{1}{2}\}}(y_{n})^{a+1}u^{p}(y)dy
 =:C\frac{x_{n}}{|x|^{n-2m+2}}
\end{eqnarray}
for all $|x|\geq1$ and $x\in\mathbb{R}^{n}_{+}$.

Next, we will apply the \emph{method of scaling spheres in integral forms} to show the following lower bound estimates for positive solution $u$, which contradict with the integral equations \eqref{IE++} for $1\leq p<p_{c}(a):=\frac{n+2m+2a}{n-2m}$.
\begin{thm}\label{lower3}
Assume $1\leq m<\frac{n}{2}$, $-\frac{2m}{n}<a<+\infty$ and $1\leq p<\frac{n+2m+2a}{n-2m}$. Suppose $u$ is a positive solution to IEs \eqref{IE++}, then it satisfies the following lower bound estimates: for all $x\in\mathbb{R}^{n}_{+}$ satisfying $|x|\geq1$ and $x_{n}\geq\frac{|x|}{\sqrt{n}}$,
\begin{equation}\label{lb2++}
  u(x)\geq C_{\kappa}|x|^{\kappa} \quad\quad \forall \, \kappa<+\infty.
\end{equation}
\end{thm}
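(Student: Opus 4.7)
The strategy mirrors the proof of Theorem \ref{lower1} on $\mathbb{R}^{n}$, but adapted to the half-space situation already handled in Theorem \ref{lower2}. Since $u$ is presented as a solution of an integral equation, I would carry out the \emph{method of scaling spheres in integral forms} (HLS-based) rather than the direct method with a narrow region principle. Define the Kelvin transform $u_{\lambda}(x)=(\lambda/|x|)^{n-2m}u(\lambda^{2}x/|x|^{2})$ centered at $0$ and set $\omega^{\lambda}=u_{\lambda}-u$ on $\overline{B^{+}_{\lambda}(0)}$. Using $\overline{x^{\lambda}}=\bar{x}^{\lambda}$, one verifies that $u_{\lambda}$ satisfies the companion IE
\[
u_{\lambda}(x)=\int_{\mathbb{R}^{n}_{+}}G^{+}_{m}(x,y)(\lambda/|y|)^{\tau}(y_{n})^{a}u_{\lambda}^{p}(y)\,dy,\qquad \tau:=n+2m+2a-p(n-2m)>0.
\]
Splitting each integral into $B^{+}_{\lambda}(0)$ and its complement, then subtracting, yields a representation for $\omega^{\lambda}$ whose kernel $G^{+}_{m}(x,y)-(\lambda/|y|)^{n-2m}G^{+}_{m}(x,y^{\lambda})$ is strictly positive on $B^{+}_{\lambda}(0)\times B^{+}_{\lambda}(0)$ by monotonicity of $G^{+}_{m}$ in the distance $|x-y|$ with $x_{n}y_{n}$ fixed, exactly as in \eqref{monotone}.

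\textbf{Step 1 (scaling starts at small $\lambda$).} On $(B^{+}_{\lambda})^{-}=\{\omega^{\lambda}<0\}$, dropping the factor $(\lambda/|y|)^{\tau}\geq 1$ and using the mean value theorem gives
\[
-\omega^{\lambda}(x)\leq C\int_{(B^{+}_{\lambda})^{-}}\frac{(y_{n})^{a}\max\{u^{p-1},u_{\lambda}^{p-1}\}(y)\,[-\omega^{\lambda}(y)]}{|x-y|^{n-2m}}\,dy.
\]
Applying Hardy--Littlewood--Sobolev (Lemma \ref{HL}) with any $q>n/(n-2m)$ yields
\[
\|\omega^{\lambda}\|_{L^{q}((B^{+}_{\lambda})^{-})}\leq C\bigl\|(x_{n})^{a}\max\{u^{p-1},u_{\lambda}^{p-1}\}\bigr\|_{L^{n/(2m)}((B^{+}_{\lambda})^{-})}\|\omega^{\lambda}\|_{L^{q}((B^{+}_{\lambda})^{-})}.
\]
This is precisely where the sharp hypothesis $a>-2m/n$ is needed, since $(x_{n})^{a}\in L^{n/(2m)}_{loc}(\overline{\mathbb{R}^{n}_{+}})$ iff $an/(2m)>-1$. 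Combined with the local boundedness of $u$ and the lower bound \eqref{2-1++} on $u_{\lambda}$ near the origin, smallness of this $L^{n/(2m)}$ norm for small $\lambda$ forces $(B^{+}_{\lambda})^{-}=\emptyset$.

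\textbf{Step 2 (dilation to $\lambda_{0}=\infty$, then initial bound).} Define $\lambda_{0}$ as in \eqref{3-18}; assume $\lambda_{0}<\infty$. The representation formula for $\omega^{\lambda_{0}}$ together with strict positivity of the kernel and of $(\lambda_{0}/|y|)^{\tau}-1$ rules out $\omega^{\lambda_{0}}\equiv 0$ and in fact yields $\omega^{\lambda_{0}}>0$ on $B^{+}_{\lambda_{0}}(0)$ as in \eqref{2-37+}. Fix a narrow region $A_{\lambda_{0},r_{0}}$ (near $\partial B^{+}_{\lambda_{0}}(0)$ and near $\partial\mathbb{R}^{n}_{+}$) whose weight has $L^{n/(2m)}$ mass $<1/(2C)$; uniform continuity of $u$ transfers this to neighbors $\lambda\in[\lambda_{0},\lambda_{0}+\varepsilon]$ and, combined with a strict positive lower bound of $\omega^{\lambda_{0}}$ off $A_{\lambda_{0},r_{0}}$, forces $(B^{+}_{\lambda})^{-}\subset A_{\lambda_{0},r_{0}}$. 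A second application of HLS then gives $(B^{+}_{\lambda})^{-}=\emptyset$, contradicting the maximality of $\lambda_{0}$. Hence $\lambda_{0}=+\infty$. Choosing $\lambda=\sqrt{|x|}$ in $u\geq u_{\sqrt{|x|}}$ for $|x|\geq 1$, $x_{n}\geq|x|/\sqrt{n}$ (the cone is preserved by the Kelvin map) delivers the initial estimate $u(x)\geq C_{0}|x|^{-(n-2m)/2}$ on this cone region.

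\textbf{Step 3 (bootstrap).} Plugging $u(x)\geq C_{k}|x|^{-\mu_{k}}$ back into \eqref{IE++} and restricting the $y$-integration to $\{2|x|\leq|y|\leq 4|x|,\ y_{n}\geq|y|/\sqrt{n}\}$, where $|x-y|\sim|y|$, $|\bar{x}-y|\sim|y|$ and $x_{n}y_{n}\gtrsim|x-y|^{2}$, one checks $G^{+}_{m}(x,y)\gtrsim|y|^{-(n-2m)}$, yielding $u(x)\geq C_{k+1}|x|^{-\mu_{k+1}}$ with $\mu_{k+1}=p\mu_{k}-(2m+a)$. The subcriticality $p<(n+2m+2a)/(n-2m)$ gives $\mu_{k+1}<\mu_{k}$ and $\mu_{k}\to-\infty$ (arithmetic if $p=1$, geometric if $p>1$), producing \eqref{lb2++}.

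\textbf{Main obstacle.} The delicate point is the lower bound on the Navier Green's function on the cone-annular bootstrap region: $G^{+}_{m}$ is a \emph{difference} of two Riesz kernels and vanishes on $\partial\mathbb{R}^{n}_{+}$, which is exactly why the cone restriction $y_{n}\geq|y|/\sqrt{n}$ cannot be dropped. Equivalently, one must argue that the ratio $|x-y|/|\bar{x}-y|$ stays uniformly away from $1$ on the chosen region, which holds because $x_{n},y_{n}\gtrsim|x|$ there. A secondary subtlety is propagating strict positivity of $\omega^{\lambda_{0}}$ all the way to $\partial\mathbb{R}^{n}_{+}$; since we work only with the IE, this comes from the strict monotonicity $G^{+}_{m}(x,y)>(\lambda_{0}/|y|)^{n-2m}G^{+}_{m}(x,y^{\lambda_{0}})$ rather than from a Hopf-type boundary argument.
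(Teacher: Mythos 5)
Your proposal follows essentially the same strategy as the paper's proof of Theorem \ref{lower3}: the Kelvin transform centered at $0$, the decomposition of the two companion integral equations over $B^{+}_{\lambda}(0)$ and its complement, the pointwise monotonicity \eqref{monotone++} of the Navier Green's function, the HLS estimate in $L^{n/(2m)}$ as the narrow-region mechanism (with $a>-2m/n$ guaranteeing local integrability of $(x_n)^{a n/(2m)}$), and the bootstrap $\mu_{k+1}=p\mu_{k}-(2m+a)$ on the cone $x_{n}\geq|x|/\sqrt{n}$ exactly as in \eqref{2-51++}. One cosmetic remark: since $p\geq1$ and $u>u_{\lambda}$ on $(B^{+}_{\lambda})^{-}$, the weight $\max\{u^{p-1},u_{\lambda}^{p-1}\}$ you write reduces to $u^{p-1}$, and the smallness of its $L^{n/(2m)}$ norm already follows from the continuity of $u$ on $\overline{\mathbb{R}^{n}_{+}}$ alone; the lower bound on $u_{\lambda}$ near the origin is not actually needed here (it would only matter in the range $0<p<1$, which is excluded).
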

\begin{proof}
Given any $\lambda>0$, we first define the Kelvin transform of a function $u:\,\mathbb{R}^{n}\rightarrow\mathbb{R}$ centered at $0$ by
\begin{equation}\label{Kelvin++}
  u_{\lambda}(x)=\left(\frac{\lambda}{|x|}\right)^{n-2m}u\left(\frac{\lambda^{2}x}{|x|^{2}}\right)
\end{equation}
for arbitrary $x\in\mathbb{R}^{n}\setminus\{0\}$. It's obvious that the Kelvin transform $u_{\lambda}$ may have singularity at $0$ and $\lim_{|x|\rightarrow\infty}|x|^{n-2m}u_{\lambda}(x)=\lambda^{n-2m}u(0)=0$. By \eqref{Kelvin++}, one can infer from the regularity assumptions on $u$ that $u_{\lambda}\in C(\overline{\mathbb{R}^{n}_{+}}\setminus\{0\})$.

Next, we will carry out the process of scaling spheres in $\overline{\mathbb{R}^{n}_{+}}$ with respect to the origin $0\in\mathbb{R}^{n}$.

Let $\lambda>0$ be an arbitrary positive real number and let
\begin{equation}\label{10-0}
  \omega^{\lambda}(x):=u_{\lambda}(x)-u(x)
\end{equation}
for any $x\in \overline{B^{+}_{\lambda}(0)}\setminus\{0\}$. By the definition of $u_{\lambda}$ and $\omega^{\lambda}$, we have
\begin{equation}\label{2-6++}
\omega^{\lambda}(x)=u_{\lambda}(x)-u(x)=-\left(\frac{\lambda}{|x|}\right)^{n-2m}\omega^{\lambda}(x^{\lambda})=-\big(\omega^{\lambda}\big)_{\lambda}(x)
\end{equation}
for every $x\in\overline{B^{+}_{\lambda}(0)}\setminus\{0\}$.

We will first show that, for $\lambda>0$ sufficiently small,
\begin{equation}\label{2-7++}
  \omega^{\lambda}(x)\geq0, \,\,\,\,\,\, \forall \,\, x\in B^{+}_{\lambda}(0).
\end{equation}
Then, we start dilating the half sphere $S^{+}_{\lambda}$ from near the origin $0$ outward as long as \eqref{2-7++} holds, until its limiting position $\lambda=+\infty$ and derive lower bound estimates on $u$ in a cone. Therefore, the scaling sphere process can be divided into two steps.

\emph{Step 1. Start dilating the half sphere $S^{+}_{\lambda}$ from near $\lambda=0$.} Define
\begin{equation}\label{2-8++}
  (B^{+}_{\lambda})^{-}:=\{x\in B^{+}_{\lambda}(0) \, | \, \omega^{\lambda}(x)<0\}.
\end{equation}
We will show that, for $\lambda>0$ sufficiently small,
\begin{equation}\label{2-9++}
  (B^{+}_{\lambda})^{-}=\emptyset.
\end{equation}

Since $u$ is a positive solution to integral equations \eqref{IE++}, through direct calculations, we get
\begin{equation}\label{2-34++}
  u(x)=\int_{B^{+}_{\lambda}(0)}G_{m}^{+}(x,y)(y_{n})^{a}u^{p}(y)dy+\int_{B^{+}_{\lambda}(0)}G_{m}^{+}(x,y^{\lambda})
  \left(\frac{\lambda}{|y|}\right)^{\tau+n-2m}(y_{n})^{a}u_{\lambda}^{p}(y)dy
\end{equation}
for any $x\in\overline{\mathbb{R}^{n}_{+}}$, where $\tau:=n+2m+2a-p(n-2m)>0$. By direct calculations, one can also verify that $u_{\lambda}$ satisfies the following integral equation
\begin{equation}\label{2-35++}
  u_{\lambda}(x)=\int_{\mathbb{R}^{n}_{+}}G_{m}^{+}(x,y)\left(\frac{\lambda}{|y|}\right)^{\tau}(y_{n})^{a}u_{\lambda}^{p}(y)dy
\end{equation}
for any $x\in\overline{\mathbb{R}^{n}_{+}}\setminus\{0\}$, and hence, it follows immediately that
\begin{eqnarray}\label{2-36++}
  u_{\lambda}(x)&=&\int_{B^{+}_{\lambda}(0)}G_{m}^{+}(x,y^{\lambda})\left(\frac{\lambda}{|y|}\right)^{n-2m}(y_{n})^{a}u^{p}(y)dy \\
 \nonumber \quad\quad &&+\int_{B^{+}_{\lambda}(0)}G_{m}^{+}(x,y)\left(\frac{\lambda}{|y|}\right)^{\tau}(y_{n})^{a}u_{\lambda}^{p}(y)dy.
\end{eqnarray}
Observe that for any $x,y\in B^{+}_{\lambda}(0)$,
\begin{equation}\label{pointwise++}
  |x-y|<\left|\frac{|y|}{\lambda}x-\frac{\lambda}{|y|}y\right|,
\end{equation}
and Green's function $G_{m}^{+}(x,y)$ is monotone decreasing about the distance $|x-y|$ provided the product $x_{n}y_{n}$ remains unchanged, one easily verifies
\begin{equation}\label{monotone++}
  G_{m}^{+}(x,y)-\left(\frac{\lambda}{|y|}\right)^{n-2m}G_{m}^{+}(x,y^{\lambda})=G_{m}^{+}(x,y)-G_{m}^{+}\left(\frac{|y|}{\lambda}x,\frac{\lambda}{|y|}y\right)>0
\end{equation}
for any $x,y\in B^{+}_{\lambda}(0)$. Therefore, from the integral equations \eqref{2-34++} and \eqref{2-36++}, one can derive that, for any $x\in B^{+}_{\lambda}(0)$,
\begin{eqnarray}\label{2-37++}
  &&\omega^{\lambda}(x)=u_{\lambda}(x)-u(x) \\
 \nonumber &=&\int_{B^{+}_{\lambda}(0)}\Bigg(G_{m}^{+}(x,y)-\left(\frac{\lambda}{|y|}\right)^{n-2m}G_{m}^{+}(x,y^{\lambda})\Bigg) (y_{n})^{a}\left(\left(\frac{\lambda}{|y|}\right)^{\tau}u_{\lambda}^{p}(y)-u^{p}(y)\right)dy\\
\nonumber &>&p\int_{(B^{+}_{\lambda})^{-}}\Bigg(G_{m}^{+}(x,y)-\left(\frac{\lambda}{|y|}\right)^{n-2m}G_{m}^{+}(x,y^{\lambda})\Bigg)
(y_{n})^{a}u^{p-1}(y)\omega^{\lambda}(y)dy\\
\nonumber &\geq&p\int_{(B^{+}_{\lambda})^{-}}\frac{C_{n,m}}{|x-y|^{n-2m}}(y_{n})^{a}u^{p-1}(y)\omega^{\lambda}(y)dy.
\end{eqnarray}

By Hardy-Littlewood-Sobolev inequality and \eqref{2-37++}, we have, for any $\frac{n}{n-2m}<q<\infty$,
\begin{eqnarray}\label{3-14++}
  \|\omega^{\lambda}\|_{L^{q}((B^{+}_{\lambda})^{-})}&\leq& C\left\|(y_{n})^{a}u^{p-1}\omega^{\lambda}\right\|_{L^{\frac{nq}{n+2mq}}((B^{+}_{\lambda})^{-})}\\
  \nonumber &\leq& C\left\|(y_{n})^{a}u^{p-1}\right\|_{L^{\frac{n}{2m}}((B^{+}_{\lambda})^{-})}\|\omega^{\lambda}\|_{L^{q}((B^{+}_{\lambda})^{-})}.
\end{eqnarray}
Since $u\in C(\overline{\mathbb{R}^{n}_{+}})$, there exists a $\epsilon_{0}>0$ small enough, such that
\begin{equation}\label{3-15++}
  C\left\|(y_{n})^{a}u^{p-1}\right\|_{L^{\frac{n}{2m}}((B^{+}_{\lambda})^{-})}\leq\frac{1}{2}
\end{equation}
for all $0<\lambda\leq\epsilon_{0}$, and hence \eqref{3-14++} implies
\begin{equation}\label{3-16++}
  \|\omega^{\lambda}\|_{L^{q}((B^{+}_{\lambda})^{-})}=0,
\end{equation}
which means $(B^{+}_{\lambda})^{-}=\emptyset$. Therefore, we have proved for all $0<\lambda\leq\epsilon_{0}$, $(B^{+}_{\lambda})^{-}=\emptyset$, that is,
\begin{equation}\label{3-17++}
  \omega^{\lambda}(x)\geq0, \,\,\,\,\,\,\, \forall \, x\in B^{+}_{\lambda}(0).
\end{equation}
This completes Step 1.

\emph{Step 2. Dilate the half sphere $S^{+}_{\lambda}$ outward until $\lambda=+\infty$ to derive lower bound estimates on $u$ in a cone.} Step 1 provides us a start point to dilate the half sphere $S^{+}_{\lambda}$ from near $\lambda=0$. Now we dilate the half sphere $S^{+}_{\lambda}$ outward as long as \eqref{2-7++} holds. Let
\begin{equation}\label{2-29++}
  \lambda_{0}:=\sup\{\lambda>0\,|\, \omega^{\mu}\geq0 \,\, in \,\, B^{+}_{\mu}(0), \,\, \forall \, 0<\mu\leq\lambda\}\in(0,+\infty],
\end{equation}
and hence, one has
\begin{equation}\label{2-30++}
  \omega^{\lambda_{0}}(x)\geq0, \quad\quad \forall \,\, x\in B^{+}_{\lambda_{0}}(0).
\end{equation}
In what follows, we will prove $\lambda_{0}=+\infty$ by contradiction arguments.

Suppose on contrary that $0<\lambda_{0}<+\infty$. In order to get a contradiction, we will first prove
\begin{equation}\label{2-31++}
  \omega^{\lambda_{0}}(x)\equiv0, \,\,\,\,\,\,\forall \, x\in B^{+}_{\lambda_{0}}(0)
\end{equation}
by using contradiction arguments.

Suppose on contrary that \eqref{2-31++} does not hold, that is, $\omega^{\lambda_{0}}\geq0$ but $\omega^{\lambda_{0}}$ is not identically zero in $B^{+}_{\lambda_{0}}(0)$, then there exists a $x^{0}\in B^{+}_{\lambda_{0}}(0)$ such that $\omega^{\lambda_{0}}(x^{0})>0$. We will obtain a contradiction with \eqref{2-29++} via showing that the half sphere $S^{+}_{\lambda}$ can be dilated outward a little bit further, more precisely, there exists a $\varepsilon>0$ small enough such that $\omega^{\lambda}\geq0$ in $B^{+}_{\lambda}(0)$ for all $\lambda\in[\lambda_{0},\lambda_{0}+\varepsilon]$.

For that purpose, we will first show that
\begin{equation}\label{2-32++}
  \omega^{\lambda_{0}}(x)>0, \,\,\,\,\,\, \forall \, x\in B^{+}_{\lambda_{0}}(0).
\end{equation}
Indeed, since we have assumed there exists a point $x^{0}\in B^{+}_{\lambda_{0}}(0)$ such that $\omega^{\lambda_{0}}(x^{0})>0$, by continuity, there exists a small $\delta>0$ and a constant $c_{0}>0$ such that
\begin{equation}\label{2-33++}
B_{\delta}(x^{0})\subset B^{+}_{\lambda_{0}}(0) \,\,\,\,\,\, \text{and} \,\,\,\,\,\,
\omega^{\lambda_{0}}(x)\geq c_{0}>0, \,\,\,\,\,\,\,\, \forall \, x\in B_{\delta}(x^{0}).
\end{equation}
From \eqref{2-33++} and the integral equations \eqref{2-34++} and \eqref{2-36++}, one can derive that, for any $x\in B^{+}_{\lambda_{0}}(0)$,
\begin{eqnarray}\label{9-37++}
  &&\omega^{\lambda_{0}}(x)=u_{\lambda_{0}}(x)-u(x) \\
 \nonumber &=&\int_{B^{+}_{\lambda_{0}}(0)}\Bigg(G_{m}^{+}(x,y)-\left(\frac{\lambda_{0}}{|y|}\right)^{n-2m}G_{m}^{+}(x,y^{\lambda_{0}})\Bigg) (y_{n})^{a}\left(\left(\frac{\lambda_{0}}{|y|}\right)^{\tau}u_{\lambda_{0}}^{p}(y)-u^{p}(y)\right)dy\\
 \nonumber &>&\int_{B^{+}_{\lambda_{0}}(0)}\Bigg(G_{m}^{+}(x,y)-\left(\frac{\lambda_{0}}{|y|}\right)^{n-2m}G_{m}^{+}(x,y^{\lambda_{0}})\Bigg) (y_{n})^{a}\left(u_{\lambda_{0}}^{p}(y)-u^{p}(y)\right)dy\\
\nonumber &\geq&p\int_{B^{+}_{\lambda_{0}}(0)}\Bigg(G_{m}^{+}(x,y)-\left(\frac{\lambda_{0}}{|y|}\right)^{n-2m}G_{m}^{+}(x,y^{\lambda_{0}})\Bigg)
(y_{n})^{a}u^{p-1}(y)\omega^{\lambda_{0}}(y)dy\\
\nonumber &\geq&p\int_{B_{\delta}(x^{0})}\Bigg(G_{m}^{+}(x,y)-\left(\frac{\lambda_{0}}{|y|}\right)^{n-2m}G_{m}^{+}(x,y^{\lambda_{0}})\Bigg)
(y_{n})^{a}u^{p-1}(y)\omega^{\lambda_{0}}(y)dy>0,
\end{eqnarray}
thus we arrive at \eqref{2-32++}.

Now, we choose a $0<r_{0}<\frac{1}{4}\lambda_{0}$ small enough, such that
\begin{equation}\label{9-0}
  C\left\|(y_{n})^{a}u^{p-1}\right\|_{L^{\frac{n}{2m}}(A^{+}_{\lambda_{0}+r_{0},2r_{0},r_{0}})}\leq\frac{1}{2},
\end{equation}
where the constant $C$ is the same as in \eqref{3-15++} and the narrow region
\begin{equation}\label{9-1}
  A^{+}_{\lambda_{0}+r_{0},2r_{0},r_{0}}:=\left\{x\in B^{+}_{\lambda_{0}+r_{0}}(0)\,\big|\,|x|>\lambda_{0}-r_{0} \,\,\, \text{or} \,\,\, x_{n}<r_{0}\right\}.
\end{equation}
By \eqref{2-37++}, one can easily verify that inequality as \eqref{3-14++} (with the same constant $C$) also holds for any $\lambda\in[\lambda_{0},\lambda_{0}+r_{0}]$, that is, for any $\frac{n}{n-2m}<q<\infty$,
\begin{equation}\label{9-2}
  \|\omega^{\lambda}\|_{L^{q}((B^{+}_{\lambda})^{-})}\leq C\left\|(y_{n})^{a}u^{p-1}\right\|_{L^{\frac{n}{2m}}((B^{+}_{\lambda})^{-})}\|\omega^{\lambda}\|_{L^{q}((B^{+}_{\lambda})^{-})}.
\end{equation}

By \eqref{2-32++}, we can define
\begin{equation}\label{2-40++}
  m_{0}:=\inf_{x\in B^{+}_{\lambda_{0}}(0)\setminus A^{+}_{\lambda_{0},r_{0},r_{0}}}\omega^{\lambda_{0}}(x)>0,
\end{equation}
where the narrow region
\begin{equation}\label{9-3}
  A^{+}_{\lambda,l_{1},l_{2}}:=\left\{x\in B^{+}_{\lambda}(0)\,\big|\,|x|>\lambda-l_{1} \,\,\, \text{or} \,\,\, x_{n}<l_{2}\right\}
\end{equation}
with $l_{1}>0$, $l_{2}>0$ and $l_{1}+l_{2}<\lambda$. Since $u$ is uniformly continuous on arbitrary compact set $K\subset\overline{\mathbb{R}^{n}_{+}}$ (say, $K=\overline{B^{+}_{2\lambda_{0}}(0)}$), we can deduce from \eqref{2-40++} that, there exists a $0<\varepsilon_{4}<r_{0}$ sufficiently small, such that, for any $\lambda\in[\lambda_{0},\lambda_{0}+\varepsilon_{4}]$,
\begin{equation}\label{2-41++}
  \omega^{\lambda}(x)\geq\frac{m_{0}}{2}>0, \,\,\,\,\,\, \forall \, x\in B^{+}_{\lambda_{0}}(0)\setminus A^{+}_{\lambda_{0},r_{0},r_{0}}.
\end{equation}
The proof of \eqref{2-41++} is completely similar to that of \eqref{2-41}, so we omit the details.

For any $\lambda\in[\lambda_{0},\lambda_{0}+\varepsilon_{4}]$, it follows from \eqref{2-41++} that
\begin{equation}\label{9-4}
  (B^{+}_{\lambda})^{-}\subset A^{+}_{\lambda,\lambda-\lambda_{0}+r_{0},r_{0}}\subset A^{+}_{\lambda_{0}+r_{0},2r_{0},r_{0}}.
\end{equation}
As a consequence of \eqref{9-0}, \eqref{9-2} and \eqref{9-4}, we get
\begin{equation}\label{9-5}
  \|\omega^{\lambda}\|_{L^{q}((B^{+}_{\lambda})^{-})}=0,
\end{equation}
and hence  $(B^{+}_{\lambda})^{-}=\emptyset$ for all $\lambda\in[\lambda_{0},\lambda_{0}+\varepsilon_{4}]$, that is,
\begin{equation}\label{2-45++}
  \omega^{\lambda}(x)\geq0, \,\,\,\,\,\,\, \forall \,\, x\in B^{+}_{\lambda}(0),
\end{equation}
which contradicts with the definition \eqref{2-29++} of $\lambda_{0}$. As a consequence, in the case $0<\lambda_{0}<+\infty$, \eqref{2-31++} must hold true, that is,
\begin{equation}\label{2-46++}
  \omega^{\lambda_{0}}\equiv0 \,\,\,\,\,\, \text{in} \,\,\, B^{+}_{\lambda_{0}}(0).
\end{equation}

However, by the second equality in \eqref{9-37++} and \eqref{2-46++}, we arrive at
\begin{eqnarray}\label{2-47++}
 && 0=\omega^{\lambda_{0}}(x)=u_{\lambda_{0}}(x)-u(x)\\
 \nonumber &=&\int_{B^{+}_{\lambda_{0}}(0)}\Bigg(G_{m}^{+}(x,y)-\left(\frac{\lambda_{0}}{|y|}\right)^{n-2m}G_{m}^{+}(x,y^{\lambda_{0}})\Bigg) (y_{n})^{a}\left(\left(\frac{\lambda_{0}}{|y|}\right)^{\tau}-1\right)u^{p}(y)dy>0
\end{eqnarray}
for any $x\in B^{+}_{\lambda_{0}}(0)$, which is absurd. Thus we must have $\lambda_{0}=+\infty$, that is,
\begin{equation}\label{2-48++}
  u(x)\geq\left(\frac{\lambda}{|x|}\right)^{n-2m}u\left(\frac{\lambda^{2}x}{|x|^{2}}\right), \quad\quad \forall \,\, |x|\geq\lambda, \quad x\in\mathbb{R}^{n}_{+}, \quad \forall \,\, 0<\lambda<+\infty.
\end{equation}
For arbitrary $|x|\geq1$, $x\in\mathbb{R}^{n}_{+}$, let $\lambda:=\sqrt{|x|}$, then \eqref{2-48++} yields that
\begin{equation}\label{2-49++}
  u(x)\geq\frac{1}{|x|^{\frac{n-2m}{2}}}u\left(\frac{x}{|x|}\right),
\end{equation}
and hence, we arrive at the following lower bound estimate:
\begin{equation}\label{2-50++}
  u(x)\geq\left(\min_{x\in S^{+}_{1},\,x_{n}\geq\frac{1}{\sqrt{n}}}u(x)\right)\frac{1}{|x|^{\frac{n-2m}{2}}}:=\frac{C_{0}}{|x|^{\frac{n-2m}{2}}}, \quad\quad \forall \,\, |x|\geq1, \,\,\, x_{n}\geq\frac{|x|}{\sqrt{n}}.
\end{equation}

The lower bound estimate \eqref{2-50++} can be improved remarkably for $1\leq p<\frac{n+2m+2a}{n-2m}$ using the ``Bootstrap" iteration technique and the integral equations \eqref{IE++}.

In fact, let $\mu_{0}:=\frac{n-2m}{2}$, we infer from the integral equations \eqref{IE++} and \eqref{2-50++} that, for any $|x|\geq1$ and $x_{n}\geq\frac{|x|}{\sqrt{n}}$,
\begin{eqnarray}\label{2-51++}
  u(x)&\geq&C\int_{2|x|\leq|y|\leq4|x|,\,y_{n}\geq\frac{|y|}{\sqrt{n}}}\left(\frac{1}{|x-y|^{n-2m}}-\frac{1}{|\bar{x}-y|^{n-2m}}\right)\frac{(y_{n})^{a}}{|y|^{p\mu_{0}}}dy \\
  \nonumber &\geq&C\int_{2|x|\leq|y|\leq4|x|,\,y_{n}\geq\frac{|y|}{\sqrt{n}}}
  \frac{x_{n}y_{n}}{|x-y|^{n-2m+2}}\cdot\frac{(y_{n})^{a}}{|y|^{p\mu_{0}}}dy \\
  \nonumber &\geq&\frac{C}{|x|^{n-2m+1}}\int^{4|x|}_{2|x|}r^{n-p\mu_{0}+a}dr \\
  \nonumber &\geq&\frac{C_{1}}{|x|^{p\mu_{0}-(2m+a)}}.
\end{eqnarray}
Let $\mu_{1}:=p\mu_{0}-(2m+a)$. Due to $1\leq p<\frac{n+2m+2a}{n-2m}$, our important observation is
\begin{equation}\label{2-52++}
  \mu_{1}:=p\mu_{0}-(2m+a)<\mu_{0}.
\end{equation}
Thus we have obtained a better lower bound estimate than \eqref{2-50++} after one iteration, that is,
\begin{equation}\label{2-53++}
  u(x)\geq\frac{C_{1}}{|x|^{\mu_{1}}}, \quad\quad \forall \,\, |x|\geq1, \,\,\, x_{n}\geq\frac{|x|}{\sqrt{n}}.
\end{equation}

For $k=0,1,2,\cdots$, define
\begin{equation}\label{2-54++}
  \mu_{k+1}:=p\mu_{k}-(2m+a).
\end{equation}
Since $1\leq p<\frac{n+2m+2a}{n-2m}$, it is easy to see that the sequence $\{\mu_{k}\}$ is monotone decreasing with respect to $k$ and $n-p\mu_{k}+a>0$ for any $k=0,1,2,\cdots$. Continuing the above iteration process involving the integral equations \eqref{IE++}, we have the following lower bound estimates for every $k=0,1,2,\cdots$,
\begin{equation}\label{2-55++}
  u(x)\geq\frac{C_{k}}{|x|^{\mu_{k}}}, \quad\quad \forall \,\, |x|\geq1, \,\,\, x_{n}\geq\frac{|x|}{\sqrt{n}}.
\end{equation}
Now Theorem \ref{lower3} follows easily from the obvious properties that as $k\rightarrow+\infty$,
\begin{equation}\label{2-56++}
   \mu_{k}\rightarrow-\infty \quad \text{if} \,\, 1\leq p<\frac{n+2m+2a}{n-2m}.
\end{equation}
This finishes our proof of Theorem \ref{lower3}.
\end{proof}

We have proved the nontrivial nonnegative solution $u$ to integral equations \eqref{IE++} is actually a positive solution. For $1\leq p<\frac{n+2m+2a}{n-2m}$, the lower bound estimates in Theorem \ref{lower3} contradicts with the following integrability indicated by the integral equations \eqref{IE++}, that is,
\begin{eqnarray}\label{2-57++}
  +\infty&>&u(e_{n})\geq C\int_{|y|\geq2,\,y_{n}\geq\frac{|y|}{\sqrt{n}}}\left(\frac{1}{|e_{n}-y|^{n-2m}}-\frac{1}{|\overline{e_{n}}-y|^{n-2m}}\right)(y_{n})^{a}u^{p}(y)dy \\
  \nonumber &\geq& C\int_{|y|\geq2,\,y_{n}\geq\frac{|y|}{\sqrt{n}}}\frac{(y_{n})^{a+1}}{|e_{n}-y|^{n-2m+2}}
  u^{p}(y)dy \\
  \nonumber &\geq&C\int_{|y|\geq2,\,y_{n}\geq\frac{|y|}{\sqrt{n}}}\frac{u^{p}(y)}{|y|^{n-2m+1-a}}dy,
\end{eqnarray}
where $e_{n}:=(0,\cdots,0,1)\in\mathbb{R}^{n}_{+}$. Therefore, we must have $u\equiv0$ in $\overline{\mathbb{R}^{n}_{+}}$, that is, the unique nonnegative solution to IEs \eqref{IE++} is $u\equiv0$ in $\overline{\mathbb{R}^{n}_{+}}$.

This concludes our proof of Theorem \ref{Thm3}.

\section{Proof of Theorem \ref{ball}}

In this section, we will prove Theorem \ref{ball} via contradiction arguments and the \emph{method of scaling spheres in local way}. Without loss of generality, we may assume the radius $R=1$.

Now suppose on the contrary that $u\geq0$ satisfies the super-critical problems \eqref{Dball} and \eqref{Nball} for H\'{e}non-Hardy equations but $u$ is not identically zero, then there exists some $\bar{x}\in B_{1}(0)$ such that $u(\bar{x})>0$. Then, by maximum principles and induction, we can infer from \eqref{Dball} and \eqref{Nball} that $u(x)>0$ in $B_{1}(0)$, and derive the equivalence between PDEs \eqref{Dball}, \eqref{Nball} and the following integral equation
\begin{equation}\label{IEball}
  u(x)=\int_{B_{1}(0)}G_{\alpha}(x,y)|y|^{a}u^{p}(y)dy,
\end{equation}
where
\begin{equation}\label{GREENball-0}
  G_{\alpha}(x,y):=\frac{C_{n,\alpha}}{|x-y|^{n-\alpha}}\int_{0}^{\frac{(1-|x|^{2})(1-|y|^{2})}{|x-y|^{2}}}\frac{b^{\frac{\alpha}{2}-1}}{(1+b)^{\frac{n}{2}}}db \quad\quad \text{if} \,\,\, x,y\in B_{1}(0),
\end{equation}
and $G_{\alpha}(x,y):=0$ if $x$ or $y\in\mathbb{R}^{n}\setminus B_{1}(0)$ is Green's function in $B_{1}(0)$ for $(-\Delta)^{\frac{\alpha}{2}}$ with Dirichlet boundary conditions when $0<\alpha\leq2$, and
\begin{equation}\label{GREENball-1}
  G_{\alpha}(x,y):=C_{n,\alpha}\Bigg(\frac{1}{|x-y|^{n-\alpha}}-\frac{1}{\Big|\frac{x}{|x|}-|x|y\Big|^{n-\alpha}}\Bigg) \quad\quad \text{if} \,\,\, x,y\in B_{1}(0),
\end{equation}
and $G_{\alpha}(x,y):=0$ if $x$ or $y\in\mathbb{R}^{n}\setminus B_{1}(0)$ is Green's function in $B_{1}(0)$ for $(-\Delta)^{\frac{\alpha}{2}}$ with Navier boundary conditions when $\alpha=2m$ and $1\leq m<\frac{n}{2}$. This means, the positive classical solution $u$ to \eqref{Dball} and \eqref{Nball} also satisfies the equivalent integral equation \eqref{IEball}, and vice versa.

Next, we will apply the \emph{method of scaling spheres in local way} to show the following lower bound estimate for asymptotic behaviour of positive solution $u$ as $x\rightarrow0$, which contradicts with the integral equation \eqref{IEball} for $p_{c}(a)<p<+\infty$.
\begin{thm}\label{lower4}
Assume $n>\alpha$, $0<\alpha<2$ or $\alpha=2m$ with $1\leq m<\frac{n}{2}$, $-\alpha<a<+\infty$ and $\frac{n+\alpha+2a}{n-\alpha}<p<+\infty$. Suppose $u$ is a positive solution to super-critical problems \eqref{Dball} or \eqref{Nball}, then it satisfies the following lower bound estimates: for all $0<|x|\leq\frac{1}{100}$,
\begin{equation}\label{lb2++b}
  u(x)\geq \frac{C_{\kappa}}{|x|^{\kappa}} \quad\quad \forall \, \kappa<+\infty.
\end{equation}
\end{thm}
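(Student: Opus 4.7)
The plan is to establish Theorem \ref{lower4} by a two-stage argument: a \emph{method of scaling spheres in local way}, which shrinks the sphere $S_\lambda = \partial B_\lambda(0)$ from a small starting radius $\lambda_0 \leq 1/100$ toward $0$, followed by a Bootstrap iteration via the integral equation \eqref{IEball}. The crucial sign difference from the global arguments (Theorems \ref{lower0}--\ref{lower3}) is that $p > p_c(a)$ yields $\tau := n+\alpha+2a-p(n-\alpha) < 0$, so that $(\lambda/|y|)^{\tau} \geq 1$ when $|y| \geq \lambda$ rather than when $|y| \leq \lambda$; this reverses the geometry of the comparison and is what allows the sphere to be \emph{shrunk} toward the origin.

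First, by maximum principles and the integral representation \eqref{IEball}, the nontrivial nonnegative solution $u$ is strictly positive inside $B_1(0)$, and bounded below by a positive constant on any compact subset of $B_1(0)\setminus\{0\}$ that stays away from $\partial B_1(0)$. Introduce the Kelvin transform $u_{\lambda}(x) = (\lambda/|x|)^{n-\alpha} u(\lambda^{2}x/|x|^{2})$ and verify, as in \eqref{2-3} and \eqref{2-3+}, that
\[
(-\Delta)^{\frac{\alpha}{2}} u_{\lambda}(x) = \Big(\frac{\lambda}{|x|}\Big)^{\tau} |x|^{a} u_{\lambda}^{p}(x)
\]
wherever $\lambda^{2}x/|x|^{2} \in B_{1}(0)$. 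Set $\omega^{\lambda}(x) := u(x) - u_{\lambda}(x)$ on an annular region $A_\lambda$ contained in $\{\lambda^{2} < |x| < \lambda\}\cap B_{1}(0)$ (the precise choice of inner radius is arranged so that the Kelvin image lies safely inside $B_{1}(0)$). The sign $\tau<0$ makes the nonlinear contribution to $(-\Delta)^{\alpha/2}\omega^{\lambda}$ favorable for showing $\omega^{\lambda}\geq 0$ in $A_\lambda$: essentially, the inequality goes the \emph{opposite} way from the subcritical case, with the roles of $B_\lambda$ and its exterior interchanged.

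Next, carry out the two-step scaling spheres procedure: (i) start by showing $\omega^{\lambda}\geq 0$ on $A_\lambda$ for $\lambda$ close to $\lambda_0$, using either the narrow-region principle (Theorem \ref{NRP1}) when $0<\alpha<2$ or a Hardy-Littlewood-Sobolev type estimate (as in the proof of Theorem \ref{lower1}) when $\alpha = 2m$; (ii) decrease $\lambda$, setting $\lambda_\infty = \inf\{\lambda \in (0,\lambda_0]: \omega^{\mu}\geq 0 \text{ on } A_\mu \text{ for } \mu\in[\lambda,\lambda_0]\}$, and show $\lambda_\infty = 0$. If $\lambda_\infty>0$, then standard strong comparison plus an integral-representation identity analogous to \eqref{2-47} and \eqref{2-47++} (which uses $(\lambda_\infty/|y|)^{\tau}-1$ having a definite sign because $\tau<0$) produces a contradiction, just as in the proofs of Theorems \ref{lower0}-\ref{lower3}. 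Thus for every $\lambda\in(0,\lambda_0]$ and $x\in A_\lambda$,
\[
u(x) \geq \Big(\frac{\lambda}{|x|}\Big)^{n-\alpha} u\Big(\frac{\lambda^{2}x}{|x|^{2}}\Big).
\]
Fixing $c\in(0,1)$ small and choosing $\lambda = \sqrt{c|x|}$ places $\lambda^{2}x/|x|^{2}$ on the sphere $\partial B_{c}(0)$, where $u$ is bounded below by a positive constant, producing the initial lower bound $u(x) \geq C|x|^{-(n-\alpha)/2}$ for $|x|\leq 1/100$.

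Finally, the Bootstrap is routine: setting $\mu_{0} = (n-\alpha)/2$ and $\mu_{k+1} = p\mu_{k} - (\alpha+a)$, the supercritical condition $p > p_c(a)$ forces $\mu_{k+1} > \mu_{k}$ and $\mu_{k}\to+\infty$. Iterating \eqref{IEball} on small annuli $\{|y|\sim|x|\}$, where the Green's function $G_{\alpha}(x,y)$ is comparable to $|x-y|^{-(n-\alpha)}$, upgrades the bound at each step and yields $u(x)\geq C_{\kappa}/|x|^{\kappa}$ for every $\kappa<+\infty$. The main obstacle is Step (i)--(ii): one must select the inner radius of $A_\lambda$ and set up boundary conditions for $\omega^{\lambda}$ so that the Kelvin image of the vanishing Dirichlet/Navier data on $\partial B_{1}(0)$ does not destroy the sign, and to verify that the narrow-region / HLS argument remains valid in the shrinking-sphere regime, which is technically the most delicate point, especially when handling the fractional and higher-order cases through a unified framework.
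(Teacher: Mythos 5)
The general strategy --- Kelvin transform centered at the origin, exploiting the sign $\tau = n+\alpha+2a-p(n-\alpha)<0$ in the supercritical regime, shrinking the sphere $S_\lambda$ to derive a reflection inequality, and then bootstrapping via the integral equation \eqref{IEball} --- is the correct one and matches the paper's approach. Your sign convention $\omega^\lambda := u - u_\lambda \geq 0$ is equivalent to the paper's $\omega^\lambda := u_\lambda - u \leq 0$, and the choice $\lambda = \sqrt{c|x|}$ to turn the reflection inequality into a power lower bound is essentially the paper's $\lambda=\sqrt{|x|/2}$.

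However, there is a genuine gap in the \emph{starting step} of the shrinking procedure. You propose to begin at a fixed small radius $\lambda_0 \leq 1/100$ and decrease $\lambda$ toward $0$, invoking the narrow-region principle or an HLS estimate to establish $\omega^\lambda \geq 0$ for $\lambda$ near $\lambda_0$. Neither argument applies there. The annulus $\{\lambda_0^2 < |x| < \lambda_0\}$ has fixed (not small) measure, so the narrow-region principle is not applicable, and the HLS norm $\||x|^a u_\lambda^{p-1}\|_{L^{n/\alpha}(\{\lambda^2<|x|<\lambda\})}$ is not controllable: since $(\lambda/|x|)^{n-\alpha}$ reaches $\lambda^{-(n-\alpha)}$ near the inner radius, the bound $u_\lambda(x)\leq (\lambda/|x|)^{n-\alpha}\sup u$ gives, after a short computation using the identity $(n-\alpha)(p-1)=2\alpha+2a-\tau$, an upper estimate of order $\lambda^{\tau}$ for this norm, which \emph{diverges} as $\lambda\to0^+$ precisely because $\tau<0$ in the supercritical case. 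You cannot simply assume the starting step holds at a fixed small $\lambda_0$; this is where the whole scheme must be launched, and it is the part that requires justification.

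The paper avoids the issue entirely by beginning the shrinking from $\lambda$ close to $1$, not from a small radius. There the annulus $\{\lambda^2<|x|<\lambda\}$ collapses onto $\partial B_1(0)$ as $\lambda\to 1^-$: its measure tends to $0$, while $(\lambda/|x|)^{n-\alpha}\leq\lambda^{-(n-\alpha)}$ stays uniformly bounded for $\lambda$ near $1$ and $u\in C(\overline{B_1(0)})$ is bounded. Hence $\||x|^a u_\lambda^{p-1}\|_{L^{n/\alpha}}\to0$ automatically, giving the starting inequality for $\lambda$ close to $1$. The shrinking then proceeds all the way down to $\lambda_0=0$, after which the reflection inequality and the bootstrap go through as you describe. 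Note also that the paper uses this HLS/integral-equation argument uniformly for $0<\alpha<2$ and $\alpha=2m$ in the bounded-domain case; a PDE-level narrow-region principle in the spirit of Theorem \ref{NRP1} would need to be reformulated for the annulus inside $B_1(0)$ and for the shrinking direction, which the paper does not do and which you should not take for granted.
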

\begin{proof}
Given any $0<\lambda<1$, we define the Kelvin transform of $u$ centered at $0$ by
\begin{equation}\label{Kelvin++b}
  u_{\lambda}(x)=\left(\frac{\lambda}{|x|}\right)^{n-\alpha}u\left(\frac{\lambda^{2}x}{|x|^{2}}\right)
\end{equation}
for arbitrary $x\in\{x\in\overline{B_{1}(0)}\,|\,\lambda^{2}\leq|x|\leq1\}$.

Now, we will carry out the process of scaling spheres in $B_{1}(0)$ with respect to the origin $0\in\mathbb{R}^{n}$.

Let $0<\lambda<1$ be an arbitrary positive real number and let $\omega^{\lambda}(x):=u_{\lambda}(x)-u(x)$ for any $x\in B_{\lambda}(0)\setminus\overline{B_{\lambda^{2}}(0)}$. By the definition of $u_{\lambda}$ and $\omega^{\lambda}$, we have
\begin{equation}\label{2-6++b}
\omega^{\lambda}(x)=u_{\lambda}(x)-u(x)=-\left(\frac{\lambda}{|x|}\right)^{n-\alpha}\omega^{\lambda}(x^{\lambda})=-\big(\omega^{\lambda}\big)_{\lambda}(x)
\end{equation}
for every $x\in B_{\lambda}(0)\setminus\overline{B_{\lambda^{2}}(0)}$.

We will first show that, for $0<\lambda<1$ sufficiently close to $1$,
\begin{equation}\label{2-7++b}
  \omega^{\lambda}(x)\leq0, \,\,\,\,\,\, \forall \,\, x\in B_{\lambda}(0)\setminus\overline{B_{\lambda^{2}}(0)}.
\end{equation}
Then, we start shrinking the sphere $S_{\lambda}$ from near the unit sphere $S_{1}$ inward as long as \eqref{2-7++b} holds, until its limiting position $\lambda=0$ and derive lower bound estimates on asymptotic behaviour of $u$ as $x\rightarrow0$. Therefore, the scaling sphere process can be divided into two steps.

\emph{Step 1. Start shrinking the sphere $S_{\lambda}$ from near $\lambda=1$.} Define
\begin{equation}\label{2-8++b}
  (B_{\lambda}\setminus\overline{B_{\lambda^{2}}})^{+}:=\{x\in B_{\lambda}(0)\setminus\overline{B_{\lambda^{2}}(0)} \, | \, \omega^{\lambda}(x)>0\}.
\end{equation}
We will show that, for $0<\lambda<1$ sufficiently close to $1$,
\begin{equation}\label{2-9++b}
  (B_{\lambda}\setminus\overline{B_{\lambda^{2}}})^{+}=\emptyset.
\end{equation}

Since the positive solution $u$ to super-critical problems \eqref{Dball} and \eqref{Nball} also satisfies the integral equation \eqref{IEball}, through direct calculations, we get, for any $0<\lambda<1$,
\begin{equation}\label{2-34++b}
  u(x)=\int_{B_{\lambda}(0)}G_{\alpha}(x,y)|y|^{a}u^{p}(y)dy+\int_{B_{\lambda}(0)\setminus\overline{B_{\lambda^{2}}(0)}}G_{\alpha}(x,y^{\lambda})
  \left(\frac{\lambda}{|y|}\right)^{\tau+n-\alpha}|y|^{a}u_{\lambda}^{p}(y)dy
\end{equation}
for any $x\in\overline{B_{1}(0)}$, where $\tau:=n+\alpha+2a-p(n-\alpha)<0$. By direct calculations, one can also verify that $u_{\lambda}$ satisfies the following integral equation
\begin{equation}\label{2-35++b}
  u_{\lambda}(x)=\int_{B_{1}(0)}G_{\alpha}(x^{\lambda},y)\left(\frac{\lambda}{|x|}\right)^{n-\alpha}|y|^{a}u^{p}(y)dy
\end{equation}
for any $x\in\{x\in\overline{B_{1}(0)}\,|\,\lambda^{2}\leq|x|\leq1\}$, and hence, it follows immediately that
\begin{eqnarray}\label{2-36++b}
  u_{\lambda}(x)&=&\int_{B_{\lambda}(0)}G_{\alpha}(x^{\lambda},y)\left(\frac{\lambda}{|x|}\right)^{n-\alpha}|y|^{a}u^{p}(y)dy \\
 \nonumber \quad\quad &&+\int_{B_{\lambda}(0)\setminus\overline{B_{\lambda^{2}}(0)}}G_{\alpha}(x^{\lambda},y^{\lambda})\left(\frac{\lambda^{2}}{|x|\cdot|y|}\right)^{n-\alpha}\left(\frac{\lambda}{|y|}\right)^{\tau}
 |y|^{a}u_{\lambda}^{p}(y)dy.
\end{eqnarray}
Therefore, we have, for any $x\in B_{1}(0)\setminus\overline{B_{\lambda^{2}}(0)}$,
\begin{eqnarray}\label{omega}
  && \omega^{\lambda}(x)=u_{\lambda}(x)-u(x) \\
  \nonumber &=& \int_{B_{\lambda}(0)\setminus\overline{B_{\lambda^{2}}(0)}}\left[\left(\frac{\lambda^{2}}{|x|\cdot|y|}\right)^{n-\alpha}G_{\alpha}(x^{\lambda},y^{\lambda})
  -\left(\frac{\lambda}{|y|}\right)^{n-\alpha}G_{\alpha}(x,y^{\lambda})\right]|y|^{a}\left(\frac{\lambda}{|y|}\right)^{\tau}u^{p}_{\lambda}(y) \\
 \nonumber && -\left[G_{\alpha}(x,y)-\left(\frac{\lambda}{|x|}\right)^{n-\alpha}G_{\alpha}(x^{\lambda},y)\right]|y|^{a}u^{p}(y)dy \\
 \nonumber && +\int_{B_{\lambda^{2}}(0)}\left[\left(\frac{\lambda}{|x|}\right)^{n-\alpha}G_{\alpha}(x^{\lambda},y)-G_{\alpha}(x,y)\right]|y|^{a}u^{p}(y)dy,
\end{eqnarray}
where $\tau:=n+\alpha+2a-p(n-\alpha)<0$.

Now we need the following Lemma on properties of Green's function $G_{\alpha}(x,y)$.
\begin{lem}\label{Gball}
Green's function $G_{\alpha}(x,y)$ satisfies the following point-wise estimates:
\begin{flalign}
\nonumber &\text{$(i)\,\, 0\leq G_{\alpha}(x,y)\leq\frac{C'}{|x-y|^{n-\alpha}}, \quad\quad \forall \,\, x,y\in\mathbb{R}^{n};$}& \\
\nonumber &\text{$(ii) \,\, G_{\alpha}(x,y)\geq\frac{C''}{|x-y|^{n-\alpha}}, \quad\quad \forall \,\, 0\leq|x|,|y|\leq\frac{1}{10};$}& \\
\nonumber &\text{$(iii) \,\, \left(\frac{\lambda}{|x|}\right)^{n-\alpha}G_{\alpha}(x^{\lambda},y)-G_{\alpha}(x,y)\leq0, \quad\quad \forall \,\, \lambda^{2}<|x|<\lambda, \,\, 0\leq|y|<\lambda;$}& \\
\nonumber &\text{$(iv) \,\, \left(\frac{\lambda^{2}}{|x|\cdot|y|}\right)^{n-\alpha}G_{\alpha}(x^{\lambda},y^{\lambda})-\left(\frac{\lambda}{|y|}\right)^{n-\alpha}
  G_{\alpha}(x,y^{\lambda})\leq G_{\alpha}(x,y)-\left(\frac{\lambda}{|x|}\right)^{n-\alpha}G_{\alpha}(x^{\lambda},y),$}& \\
\nonumber &\text{$\qquad\qquad\qquad\qquad\qquad\qquad\qquad\qquad\qquad\qquad\qquad\qquad\qquad\qquad\quad \forall \,\, \lambda^{2}<|x|,|y|<\lambda.$}&
\end{flalign}
\end{lem}

Lemma \ref{Gball} can be proved by direct calculations, so we omit the details here.

From Lemma \ref{Gball} and the integral equations \eqref{omega}, one can derive that, for any $x\in B_{\lambda}(0)\setminus\overline{B_{\lambda^{2}}(0)}$,
\begin{eqnarray}\label{2-37++b}
  &&\omega^{\lambda}(x)=u_{\lambda}(x)-u(x) \\
 \nonumber &\leq&\int_{B_{\lambda}(0)\setminus\overline{B_{\lambda^{2}}(0)}}\Bigg[G_{\alpha}(x,y)-\left(\frac{\lambda}{|x|}\right)^{n-\alpha}G_{\alpha}(x^{\lambda},y)\Bigg] |y|^{a}\left[\left(\frac{\lambda}{|y|}\right)^{\tau}u_{\lambda}^{p}(y)-u^{p}(y)\right]dy\\
\nonumber &<&\int_{B_{\lambda}(0)\setminus\overline{B_{\lambda^{2}}(0)}}\Bigg(G_{\alpha}(x,y)-\left(\frac{\lambda}{|x|}\right)^{n-\alpha}G_{\alpha}(x^{\lambda},y)\Bigg) |y|^{a}\left(u_{\lambda}^{p}(y)-u^{p}(y)\right)dy\\
\nonumber &\leq&C\int_{\left(B_{\lambda}\setminus\overline{B_{\lambda^{2}}}\right)^{+}}\frac{|y|^{a}}{|x-y|^{n-\alpha}}u_{\lambda}^{p-1}(y)\omega^{\lambda}(y)dy.
\end{eqnarray}

By Hardy-Littlewood-Sobolev inequality and \eqref{2-37++b}, we have, for any $\frac{n}{n-\alpha}<q<\infty$,
\begin{eqnarray}\label{3-14++b}
  \|\omega^{\lambda}\|_{L^{q}((B_{\lambda}\setminus\overline{B_{\lambda^{2}}})^{+})}&\leq& C\left\||x|^{a}u_{\lambda}^{p-1}\omega^{\lambda}\right\|_{L^{\frac{nq}{n+\alpha q}}((B_{\lambda}\setminus\overline{B_{\lambda^{2}}})^{+})}\\
  \nonumber &\leq& C\left\||x|^{a}u^{p-1}_{\lambda}\right\|_{L^{\frac{n}{\alpha}}((B_{\lambda}\setminus\overline{B_{\lambda^{2}}})^{+})}
  \cdot\|\omega^{\lambda}\|_{L^{q}((B_{\lambda}\setminus\overline{B_{\lambda^{2}}})^{+})}.
\end{eqnarray}
Since $u\in C(\overline{B_{1}(0)})$, there exists a $\epsilon_{0}>0$ small enough, such that
\begin{equation}\label{3-15++b}
  C\left\||x|^{a}u^{p-1}_{\lambda}\right\|_{L^{\frac{n}{\alpha}}((B_{\lambda}\setminus\overline{B_{\lambda^{2}}})^{+})}\leq\frac{1}{2}
\end{equation}
for all $1-\epsilon_{0}\leq\lambda<1$, and hence \eqref{3-14++b} implies
\begin{equation}\label{3-16++b}
  \|\omega^{\lambda}\|_{L^{q}((B_{\lambda}\setminus\overline{B_{\lambda^{2}}})^{+})}=0,
\end{equation}
which means $(B_{\lambda}\setminus\overline{B_{\lambda^{2}}})^{+}=\emptyset$. Therefore, we have proved for all $1-\epsilon_{0}\leq\lambda<1$, $(B_{\lambda}\setminus\overline{B_{\lambda^{2}}})^{+}=\emptyset$, that is,
\begin{equation}\label{3-17++b}
  \omega^{\lambda}(x)\leq0, \,\,\,\,\,\,\, \forall \, x\in B_{\lambda}(0)\setminus\overline{B_{\lambda^{2}}(0)}.
\end{equation}
This completes Step 1.

\emph{Step 2. Shrink the sphere $S_{\lambda}$ inward until $\lambda=0$ to derive lower bound estimates on asymptotic behaviour of $u$ as $x\rightarrow0$.} Step 1 provides us a start point to shrink the sphere $S_{\lambda}$ from near $\lambda=1$. Now we shrink the sphere $S_{\lambda}$ inward as long as \eqref{2-7++b} holds. Let
\begin{equation}\label{2-29++b}
  \lambda_{0}:=\inf\{0<\lambda<1\,|\, \omega^{\mu}\leq0 \,\, in \,\, B_{\mu}(0)\setminus\overline{B_{\mu^{2}}(0)}, \,\, \forall \, \lambda\leq\mu<1\}\in[0,1),
\end{equation}
and hence, one has
\begin{equation}\label{2-30++b}
  \omega^{\lambda_{0}}(x)\leq0, \quad\quad \forall \,\, x\in B_{\lambda_{0}}(0)\setminus\overline{B_{\lambda_{0}^{2}}(0)}.
\end{equation}
In what follows, we will prove $\lambda_{0}=0$ by contradiction arguments.

Suppose on contrary that $0<\lambda_{0}<1$. In order to get a contradiction, we will first prove
\begin{equation}\label{2-31++b}
  \omega^{\lambda_{0}}(x)\equiv0, \,\,\,\,\,\,\forall \, x\in B_{\lambda_{0}}(0)\setminus\overline{B_{\lambda_{0}^{2}}(0)}
\end{equation}
by using contradiction arguments.

Suppose on contrary that \eqref{2-31++b} does not hold, that is, $\omega^{\lambda_{0}}\leq0$ but $\omega^{\lambda_{0}}$ is not identically zero in $B_{\lambda_{0}}(0)\setminus\overline{B_{\lambda_{0}^{2}}(0)}$, then there exists a $x^{0}\in B_{\lambda_{0}}(0)\setminus\overline{B_{\lambda_{0}^{2}}(0)}$ such that $\omega^{\lambda_{0}}(x^{0})<0$. We will obtain a contradiction with \eqref{2-29++b} via showing that the sphere $S_{\lambda}$ can be shrunk inward a little bit further, more precisely, there exists a $0<\varepsilon<\lambda_{0}$ small enough such that $\omega^{\lambda}\leq0$ in $B_{\lambda}(0)\setminus\overline{B_{\lambda^{2}}(0)}$ for all $\lambda\in[\lambda_{0}-\varepsilon,\lambda_{0}]$.

For that purpose, we will first show that
\begin{equation}\label{2-32++b}
  \omega^{\lambda_{0}}(x)<0, \,\,\,\,\,\, \forall \, x\in B_{\lambda_{0}}(0)\setminus\overline{B_{\lambda_{0}^{2}}(0)}.
\end{equation}
Indeed, since we have assumed there exists a point $x^{0}\in B_{\lambda_{0}}(0)\setminus\overline{B_{\lambda_{0}^{2}}(0)}$ such that $\omega^{\lambda_{0}}(x^{0})<0$, by continuity, there exists a small $\delta>0$ and a constant $c_{0}>0$ such that
\begin{equation}\label{2-33++b}
B_{\delta}(x^{0})\subset B_{\lambda_{0}}(0)\setminus\overline{B_{\lambda_{0}^{2}}(0)} \,\,\,\,\,\, \text{and} \,\,\,\,\,\,
\omega^{\lambda_{0}}(x)\leq -c_{0}<0, \,\,\,\,\,\,\,\, \forall \, x\in B_{\delta}(x^{0}).
\end{equation}
From \eqref{2-33++b}, Lemma \ref{Gball} and the integral equations \eqref{2-34++b} and \eqref{2-36++b}, one can derive that, for any $x\in B_{\lambda_{0}}(0)\setminus\overline{B_{\lambda_{0}^{2}}(0)}$,
\begin{eqnarray}\label{9-37++b}
  &&\omega^{\lambda_{0}}(x)=u_{\lambda_{0}}(x)-u(x) \\
 \nonumber &\leq&\int_{B_{\lambda_{0}}(0)\setminus\overline{B_{\lambda_{0}^{2}}(0)}}
 \Bigg[G_{\alpha}(x,y)-\left(\frac{\lambda_{0}}{|x|}\right)^{n-\alpha}G_{\alpha}(x^{\lambda_{0}},y)\Bigg]|y|^{a}
 \left[\left(\frac{\lambda_{0}}{|y|}\right)^{\tau}u_{\lambda_{0}}^{p}(y)-u^{p}(y)\right]dy\\
\nonumber &<&\int_{B_{\lambda_{0}}(0)\setminus\overline{B_{\lambda_{0}^{2}}(0)}}
\Bigg(G_{\alpha}(x,y)-\left(\frac{\lambda_{0}}{|x|}\right)^{n-\alpha}G_{\alpha}(x^{\lambda_{0}},y)\Bigg)|y|^{a}
\left(u_{\lambda_{0}}^{p}(y)-u^{p}(y)\right)dy\\
\nonumber &\leq&p\int_{B_{\delta}(x_{0})}\Bigg(G_{\alpha}(x,y)-\left(\frac{\lambda_{0}}{|x|}\right)^{n-\alpha}G_{\alpha}(x^{\lambda_{0}},y)\Bigg)|y|^{a}
u_{\lambda_{0}}^{p-1}(y)\omega^{\lambda_{0}}(y)dy<0,
\end{eqnarray}
thus we arrive at \eqref{2-32++b}.

Now, we choose a $0<r_{0}<\frac{1}{4}\min\{\lambda_{0}-\lambda_{0}^{2},\lambda_{0}^{2}\}$ small enough, such that
\begin{equation}\label{9-0b}
  C\left\||x|^{a}u_{\lambda}^{p-1}\right\|_{L^{\frac{n}{\alpha}}\big(A_{\lambda_{0},r_{0}}\cup A_{\lambda_{0}^{2}+r_{0},2r_{0}}\big)}\leq\frac{1}{2}
\end{equation}
for any $\lambda\in[\lambda_{0}-\frac{r_{0}}{2},\lambda_{0}]$, where the constant $C$ is the same as in \eqref{3-15++b} and the narrow region
\begin{equation}\label{9-1b}
  A_{r,l}:=\left\{x\in B_{r}(0)\,\big|\,|x|>r-l\right\}
\end{equation}
for $r>0$ and $0<l<r$. By \eqref{2-37++b}, one can easily verify that inequality as \eqref{3-14++b} (with the same constant $C$) also holds for any $\lambda\in[\lambda_{0}-\frac{r_{0}}{2},\lambda_{0}]$, that is, for any $\frac{n}{n-\alpha}<q<\infty$,
\begin{equation}\label{9-2b}
  \|\omega^{\lambda}\|_{L^{q}\left((B_{\lambda}\setminus\overline{B_{\lambda^{2}}})^{+}\right)}\leq C\left\||x|^{a}u^{p-1}_{\lambda}\right\|_{L^{\frac{n}{\alpha}}\left((B_{\lambda}\setminus\overline{B_{\lambda^{2}}})^{+}\right)}
  \cdot\|\omega^{\lambda}\|_{L^{q}\left((B_{\lambda}\setminus\overline{B_{\lambda^{2}}})^{+}\right)}.
\end{equation}

By \eqref{2-32++b}, we can define
\begin{equation}\label{2-40++b}
  M_{0}:=\sup_{x\in \overline{B_{\lambda_{0}-r_{0}}(0)}\setminus B_{\lambda_{0}^{2}+r_{0}}(0)}\omega^{\lambda_{0}}(x)<0.
\end{equation}
Since $u$ is uniformly continuous on arbitrary compact set $K\subset\overline{B_{1}(0)}$ (say, $K=\{x\in\overline{B_{1}(0)}\,|\,\frac{\lambda_{0}^{2}+r_{0}}{2}\leq|x|\leq\lambda_{0}-r_{0}\}$), we can deduce from \eqref{2-40++b} that, there exists a $0<\varepsilon_{5}<\frac{r_{0}}{2}$ sufficiently small, such that, for any $\lambda\in[\lambda_{0}-\varepsilon_{5},\lambda_{0}]$,
\begin{equation}\label{2-41++b}
  \omega^{\lambda}(x)\leq\frac{M_{0}}{2}<0, \,\,\,\,\,\, \forall \, x\in \overline{B_{\lambda_{0}-r_{0}}(0)}\setminus B_{\lambda_{0}^{2}+r_{0}}(0).
\end{equation}
The proof of \eqref{2-41++b} is completely similar to that of \eqref{2-41}, so we omit the details.

For any $\lambda\in[\lambda_{0}-\varepsilon_{5},\lambda_{0}]$, it follows from \eqref{2-41++b} that
\begin{equation}\label{9-4b}
  (B_{\lambda}\setminus\overline{B_{\lambda^{2}}})^{+}\subset A_{\lambda_{0},r_{0}}\cup A_{\lambda_{0}^{2}+r_{0},2r_{0}}.
\end{equation}
As a consequence of \eqref{9-0b}, \eqref{9-2b} and \eqref{9-4b}, we get
\begin{equation}\label{9-5b}
  \|\omega^{\lambda}\|_{L^{q}\left((B_{\lambda}\setminus\overline{B_{\lambda^{2}}})^{+}\right)}=0,
\end{equation}
and hence $(B_{\lambda}\setminus\overline{B_{\lambda^{2}}})^{+}=\emptyset$ for all $\lambda\in[\lambda_{0}-\varepsilon_{5},\lambda_{0}]$, that is,
\begin{equation}\label{2-45++b}
  \omega^{\lambda}(x)\leq0, \,\,\,\,\,\,\, \forall \,\, x\in B_{\lambda}(0)\setminus\overline{B_{\lambda^{2}}(0)},
\end{equation}
which contradicts with the definition \eqref{2-29++b} of $\lambda_{0}$. As a consequence, in the case $0<\lambda_{0}<1$, \eqref{2-31++b} must hold true, that is,
\begin{equation}\label{2-46++b}
  \omega^{\lambda_{0}}\equiv0 \,\,\,\,\,\, \text{in} \,\,\, B_{\lambda_{0}}(0)\setminus\overline{B_{\lambda_{0}^{2}}(0)}.
\end{equation}

However, by the first inequality in \eqref{9-37++b} and \eqref{2-46++b}, we arrive at
\begin{eqnarray}\label{2-47++b}
 && 0=\omega^{\lambda_{0}}(x)=u_{\lambda_{0}}(x)-u(x)\\
 \nonumber &\leq&\int_{B_{\lambda_{0}}(0)\setminus\overline{B_{\lambda_{0}^{2}}(0)}}
 \Bigg[G_{\alpha}(x,y)-\left(\frac{\lambda_{0}}{|x|}\right)^{n-\alpha}G_{\alpha}(x^{\lambda_{0}},y)\Bigg]|y|^{a}
 \left[\left(\frac{\lambda_{0}}{|y|}\right)^{\tau}-1\right]u^{p}(y)dy<0
\end{eqnarray}
for any $x\in B_{\lambda_{0}}(0)\setminus\overline{B_{\lambda_{0}^{2}}(0)}$, which is absurd. Thus we must have $\lambda_{0}=0$, that is,
\begin{equation}\label{2-48++b}
  u(x)\geq\left(\frac{\lambda}{|x|}\right)^{n-\alpha}u\left(\frac{\lambda^{2}x}{|x|^{2}}\right), \quad\quad \forall \,\, \lambda^{2}\leq|x|\leq\lambda, \quad \forall \,\, 0<\lambda<1.
\end{equation}
For arbitrary $0<|x|\leq\frac{1}{2}$, let $0<\lambda:=\sqrt{\frac{|x|}{2}}\leq\frac{1}{2}$, then \eqref{2-48++b} yields that
\begin{equation}\label{2-49++b}
  u(x)\geq\left(\frac{1}{2|x|}\right)^{\frac{n-\alpha}{2}}u\left(\frac{x}{2|x|}\right),
\end{equation}
and hence, we arrive at the following lower bound estimate on asymptotic behaviour of $u$ as $x\rightarrow0$:
\begin{equation}\label{2-50++b}
  u(x)\geq\left(\min_{x\in S_{\frac{1}{2}}}u(x)\right)\left(\frac{1}{2|x|}\right)^{\frac{n-\alpha}{2}}:=\frac{C_{0}}{|x|^{\frac{n-\alpha}{2}}}, \quad\quad \forall \,\, 0<|x|\leq\frac{1}{2}.
\end{equation}

The lower bound estimate \eqref{2-50++b} can be improved remarkably for $p_{c}(a):=\frac{n+\alpha+2a}{n-\alpha}<p<+\infty$ using the ``Bootstrap" iteration technique and the integral equation \eqref{IEball}.

In fact, let $\mu_{0}:=\frac{n-\alpha}{2}$, we infer from the integral equation \eqref{IEball}, Lemma \ref{Gball} and \eqref{2-50++b} that, for any $0<|x|\leq\frac{1}{100}$,
\begin{eqnarray}\label{2-51++b}
  u(x)&\geq&C\int_{\frac{1}{4}|x|\leq|y|\leq\frac{1}{2}|x|}G_{\alpha}(x,y)|y|^{a}\frac{1}{|y|^{p\mu_{0}}}dy \\
  \nonumber &\geq&C\int_{\frac{1}{4}|x|\leq|y|\leq\frac{1}{2}|x|}\frac{1}{|x-y|^{n-\alpha}}\cdot\frac{1}{|y|^{p\mu_{0}-a}}dy \\
  \nonumber &\geq&\frac{C}{|x|^{n-\alpha}}\int^{\frac{|x|}{2}}_{\frac{|x|}{4}}r^{n-1-p\mu_{0}+a}dr \\
  \nonumber &\geq&\frac{C_{1}}{|x|^{p\mu_{0}-(\alpha+a)}}.
\end{eqnarray}
Now, let $\mu_{1}:=p\mu_{0}-(\alpha+a)$. Due to $p_{c}(a):=\frac{n+\alpha+2a}{n-\alpha}<p<+\infty$, our important observation is
\begin{equation}\label{2-52++b}
  \mu_{1}:=p\mu_{0}-(\alpha+a)>\mu_{0}.
\end{equation}
Thus we have obtained a better lower bound estimate than \eqref{2-50++b} after one iteration, that is,
\begin{equation}\label{2-53++b}
  u(x)\geq\frac{C_{1}}{|x|^{\mu_{1}}}, \quad\quad \forall \,\, 0<|x|\leq\frac{1}{100}.
\end{equation}

For $k=0,1,2,\cdots$, define
\begin{equation}\label{2-54++b}
  \mu_{k+1}:=p\mu_{k}-(\alpha+a).
\end{equation}
Since $p_{c}(a):=\frac{n+\alpha+2a}{n-\alpha}<p<+\infty$, it is easy to see that the sequence $\{\mu_{k}\}$ is monotone increasing with respect to $k$. Continuing the above iteration process involving the integral equation \eqref{IEball}, we have the following lower bound estimates for every $k=0,1,2,\cdots$,
\begin{equation}\label{2-55++b}
  u(x)\geq\frac{C_{k}}{|x|^{\mu_{k}}}, \quad\quad \forall \,\, 0<|x|\leq\frac{1}{100}.
\end{equation}
Now Theorem \ref{lower4} follows easily from the obvious properties that as $k\rightarrow+\infty$,
\begin{equation}\label{2-56++b}
   \mu_{k}\rightarrow+\infty \quad \text{if} \,\,\, \frac{n+\alpha+2a}{n-\alpha}<p<+\infty.
\end{equation}
This finishes our proof of Theorem \ref{lower4}.
\end{proof}

We have proved the nontrivial nonnegative solution $u$ to the super-critical problems \eqref{Dball} and \eqref{Nball} for H\'{e}non-Hardy equations in $\overline{B_{1}(0)}$ is actually a positive solution which also satisfies the integral equation \eqref{IEball}. For $p_{c}(a):=\frac{n+\alpha+2a}{n-\alpha}<p<+\infty$, one can easily observe that the lower bound estimates on asymptotic behaviour of $u$ as $x\rightarrow0$ in Theorem \ref{lower4} indicate strong singularity of $u$ at the origin $0$, which obviously contradicts with the integral equation \eqref{IEball}. Therefore, we must have $u\equiv0$ in $\overline{B_{1}(0)}$, that is, the unique nonnegative solution to the super-critical problems \eqref{Dball} and \eqref{Nball} is $u\equiv0$ in $\overline{B_{1}(0)}$.

This concludes our proof of Theorem \ref{ball}.


\begin{thebibliography}{99}

\bibitem{BHM1} A. Barton, S. Hofmann and S. Mayboroda, {\it The Neumann problem for higher order elliptic equations with symmetric coefficients}, Math. Ann., \textbf{371} (2018), no. 1-2, 297-336.

\bibitem{BHM2} A. Barton, S. Hofmann and S. Mayboroda, {\it Dirichlet and Neumann boundary values of solutions to higher order elliptic equations}, Ann. Inst. Fourier (Grenoble), \textbf{69} (2019), no. 4, 1627-1678.

\bibitem{BG} M. F. Bidaut-V\'{e}ron and H. Giacomini, {\it A new dynamical approach of Emden-Fowler equations and systems}, Adv. Differential Equations, \textbf{15} (2010), no. 11-12, 1033-1082.

\bibitem{BKN} K. Bogdan, T. Kulczycki and A. Nowak, {\it Gradient estimates for harmonic and $q$-harmonic functions of symmetric stable processes}, Illinois J. Math., \textbf{46} (2002), 541-556.

\bibitem{BP} M. F. Bidaut-V\'{e}ron and S. Pohozaev, {\it Nonexistence results and estimates for some nonlinear elliptic problems}, J. Anal. Math., \textbf{84} (2001), 1-49.

\bibitem{CC} L. Cao and W. Chen, {\it Liouville type theorems for poly-harmonic Navier problems}, Disc. Cont. Dyn. Sys., \textbf{33} (2013), 3937-3955.

\bibitem{CDQ} W. Chen, W. Dai and G. Qin, {\it Liouville type theorems, a priori estimates and existence of solutions for critical order Hardy-H\'{e}non equations in $\mathbb{R}^n$}, preprint, submitted for publication, arXiv: 1808.06609.

\bibitem{CFL} W. Chen, Y. Fang and C. Li, {\it Super poly-harmonic property of solutions for Navier boundary problems on a half space}, J. Funct. Anal., \textbf{265} (2013), 1522-1555.

\bibitem{CFY} W. Chen, Y. Fang and R. Yang, {\it Liouville theorems involving the fractional Laplacian on a half space}, Adv. Math., \textbf{274} (2015), 167-198.

\bibitem{CGS} L. Caffarelli, B. Gidas and J. Spruck, {\it Asymptotic symmetry and local behavior of semilinear elliptic equation with critical Sobolev growth}, Comm. Pure Appl. Math., \textbf{42} (1989), 271-297.

\bibitem{CI} P. Constantin and M. Ignatova, {\it Remarks on the fractional Laplacian with Dirichlet boundary conditions and applications}, Int. Math. Res. Not., \textbf{2017} (2017), no. 6, 1653-1673.

\bibitem{CL} W. Chen and C. Li, {\it Classification of solutions of some nonlinear elliptic equations}, Duke Math. J., \textbf{63} (1991), no. 3, 615-622.

\bibitem{CL1} W. Chen and C. Li, {\it On Nirenberg and related problems - a necessary and sufficient condition}, Comm. Pure Appl. Math., \textbf{48} (1995), 657-667.

\bibitem{CL4} W. Chen and C. Li, {\it A priori estimates for prescribing scalar curvature equations}, Annals of Math., \textbf{145} (1997), no. 3, 547-564.

\bibitem{CL0} W. Chen and C. Li, {\it Moving planes, moving spheres, and a priori estimates}, J. Differential Equations, \textbf{195} (2003), no. 1, 1-13.

\bibitem{CLiu} T. Cheng and S. Liu, {\it A Liouville type theorem for higher order Hardy-H\'{e}non equation in $\mathbb{R}^{n}$}, J. Math. Anal. Appl., \textbf{444} (2016), 370-389.

\bibitem{CLL} W. Chen, C. Li and Y. Li, {\it A direct method of moving planes for the fractional Laplacian}, Adv. Math., \textbf{308} (2017), 404-437.

\bibitem{CLM} W. Chen, Y. Li and P. Ma, {\it The Fractional Laplacian}, World Scientic Publishing Co. Pte. Ltd., 2020, 344 pp, https://doi.org/10.1142/10550.

\bibitem{CLO} W. Chen, C. Li and B. Ou, {\it Classification of solutions for an integral equation}, Comm. Pure Appl. Math., \textbf{59} (2006), 330-343.

\bibitem{CLZ} W. Chen, Y. Li and R. Zhang, {\it A direct method of moving spheres on fractional order equations}, J. Funct. Anal., \textbf{272} (2017), no. 10, 4131-4157.

\bibitem{CLZC} W. Chen, C. Li, L. Zhang and T. Cheng, {\it A Liouville theorem for $\alpha$-harmonic functions in $\mathbb{R}^{n}_{+}$}, Disc. Contin. Dyn. Syst. - A, \textbf{36} (2016), no. 3, 1721-1736.

\bibitem{Cowan} C. Cowan, {\it A Liouville theorem for a fourth order H\'{e}non equation}, Adv. Nonlinear Stud., \textbf{14} (2014), no. 3, 767-776.

\bibitem{CPY} D. Cao, S. Peng and S. Yan, {\it Asymptotic behavior of the ground state solutions for H\'{e}non equation}, IMA J. Appl. Math., \textbf{74} (2009), 468-480.

\bibitem{CS} X. Cabr\'{e} and Y. Sire, {\it Nonlinear equations for fractional Laplacians, I: regularity, maximum principles}, Annales de l'Institut Henri Poincar\'{e} C, Analyse Non Lin\'{e}aire, \textbf{31} (2014), 23-53.

\bibitem{D} G. Deng, {\it Integral representation of harmonic functions in half space}, Bull. Sci. Math., \textbf{131} (2007), 53-59.

\bibitem{DDGW} A. DelaTorre, M. del Pino, M. Gonz\'{a}lez and J. Wei, {\it Delaunay-type singular solutions for the fractional Yamabe problem}, Math. Ann., \textbf{369} (2017), no. 1-2, 597-626.

\bibitem{DFQ} W. Dai, Y. Fang and G. Qin, {\it Classification of positive solutions to fractional order Hartree equations via a direct method of moving planes}, J. Differential Equations, \textbf{265} (2018), 2044-2063.

\bibitem{DPQ} W. Dai, S. Peng and G. Qin, {\it Liouville type theorems, a priori estimates and existence of solutions for non-critical higher order Lane-Emden-Hardy equations}, J. d'Analyse Math\'{e}matique, in press, 38 pp, arXiv: 1808.10771.

\bibitem{DQ1} W. Dai and G. Qin, {\it Classification of nonnegative classical solutions to third-order equations}, Adv. Math., \textbf{328} (2018), 822-857.

\bibitem{DQ} W. Dai and G. Qin, {\it Liouville type theorems for Hardy-H\'{e}non equations with concave nonlinearities}, Mathematische Nachrichten, \textbf{293} (2020), no. 6, 1084-1093.

\bibitem{DQ2} W. Dai and G. Qin, {\it Liouville theorems for poly-hamonic functions on $\mathbb{R}^{n}_{+}$}, Archiv der Mathematik, \textbf{115} (2020), no. 3, 317-327.

\bibitem{DZ} J. Dou and H. Zhou, {\it Liouville theorems for fractional H\'{e}non equation and system on $\mathbb{R}^{n}$}, Commun. Pure Appl. Anal., \textbf{14} (2015), no. 5, 1915-1927.

\bibitem{Fall} M. M. Fall, {\it Entire $s$-harmonic functions are affine}, Proc. Amer. Math. Soc., \textbf{144} (2016), 2587-2592.

\bibitem{F} M. Fazly, {\it Liouville theorems for the polyharmonic H\'{e}non-Lane-Emden system}, Methods Appl. Anal., \textbf{21} (2014), no. 2, 265-281.

\bibitem{FG} M. Fazly and N. Ghoussoub, {\it On the H\'{e}non-Lane-Emden conjecture}, Discrete Contin. Dyn. Syst. - A, \textbf{34} (2014), no. 6, 2513-2533.

\bibitem{FHR} V. Felli, E. Hebey and F. Robert, {\it Fourth order equations of critical Sobolev growth. Energy function and solutions of bounded energy in the conformally flat case}, NoDEA Nonlinear Differential Equations Appl., \textbf{12} (2005), no. 2, 171-213.

\bibitem{FW} M. M. Fall and T. Weth, {\it Monotonicity and nonexistence results for some fractional elliptic problems in the half-space}, \textbf{18} (2016), no. 1, 1550012, 25 pp.

\bibitem{GGN} F. Gladiali, M. Grossi and S. Neves, {\it Nonradial solutions for the H\'{e}non equation in $\mathbb{R}^{n}$}, Adv. Math., \textbf{249} (2013), 1-36.

\bibitem{GS} B. Gidas and J. Spruck, {\it Global and local behavior of positive solutions of nonlinear elliptic equations}, Comm. Pure Appl. Math., \textbf{34} (1981), no. 4, 525-598.

\bibitem{GS1} B. Gidas and J. Spruck, {\it A priori bounds for positive solutions of nonlinear elliptic equations}, Comm. PDE, \textbf{6} (1981), no. 8, 883-901.

\bibitem{GT} D. Gilbarg and N. S. Trudinger, {\it Elliptic partial differential equations of second order}, 2nd Ed., Springer-Verlag, New York, 1983.

\bibitem{GW} Z. Guo and F. Wan, {\it Further study of a weighted elliptic equation}, Sci. China Math., \textbf{60} (2017), no. 12, 2391-2406.

\bibitem{HRW} E. Hebey, F. Robert and Y. Wen, {\it Compactness and global estimates for a fourth order equation of critical Sobolev growth arising from conformal geometry}, Commun. Contemp. Math., \textbf{8} (2006), no. 1, 9-65.

\bibitem{JLX} Q. Jin, Y. Y. Li and H. Xu, {\it Symmetry and Asymmetry: The Method of Moving Spheres}, Adv. Differential Equations, \textbf{13} (2007), no. 7, 601-640.

\bibitem{K} T. Kulczycki, {\it Properties of Green function of symmetric stable processes}, Probability and Mathematical Statistics, \textbf{17} (1997), 339-364.

\bibitem{LB} Y. Li and J. Bao, {\it Fractional Hardy-H\'{e}non equations on exterior domains}, J. Diff. Equations, \textbf{266} (2019), no. 2-3, 1153-1175.

\bibitem{Lei} Y. Lei, {\it Asymptotic properties of positive solutions of the Hardy-Sobolev type equations}, J. Diff. Equations, \textbf{254} (2013), 1774-1799.

\bibitem{Lin} C. Lin, {\it A classification of solutions of a conformally invariant fourth order equation in $\mathbb{R}^{n}$}, Comment. Math. Helv., \textbf{73} (1998), 206-231.

\bibitem{Li} Y. Y. Li, {\it Remark on some conformally invariant integral equations: the method of moving spheres}, J. European Math. Soc., \textbf{6} (2004), 153-180.

\bibitem{LZ} Y. Y. Li and M. Zhu, {\it Uniqueness theorems through the method of moving spheres}, Duke Math. J., \textbf{80} (1995), 383-417.

\bibitem{LZ1} Y. Li and L. Zhang, {\it Liouville type theorems and Harnack type inequalities for semilinear elliptic equations}, J. Anal. Math, \textbf{90} (2003), 27-87.

\bibitem{M} E. Mitidieri, {\it Nonexistence of positive solutions of semilinear elliptic systems in $\mathbb{R}^{N}$}, Differential Integral Equations, \textbf{9} (1996), 465-479.

\bibitem{MM1} S. Mayboroda and V. Maz$'$ya, {\it Regularity of solutions to the polyharmonic equation in general domains}, Invent. Math., \textbf{196} (2014), no. 1, 1-68.

\bibitem{MM2} S. Mayboroda and V. Maz$'$ya, {\it Polyharmonic capacity and Wiener test of higher order}, Invent. Math., \textbf{211} (2018), no. 2, 779-853.

\bibitem{MP} E. Mitidieri and S. I. Pohozaev, {\it A priori estimates and the absence of solutions of nonlinear partial differential equations and inequalities}, Tr. Mat. Inst. Steklova, \textbf{234} (2001), 1-384.

\bibitem{N} W. Ni, {\it A nonlinear Dirichlet problem on the unit ball and its applications}, Indiana Univ. Math. J., \textbf{31} (1982), 801-807.

\bibitem{Pa} P. Padilla, {\it On some nonlinear elliptic equations}, Thesis, Courant Institute, 1994.

\bibitem{P} Q. Phan, {\it Liouville-type theorems for polyharmonic H\'{e}non-Lane-Emden system}, Adv. Nonlinear Stud., \textbf{15} (2015), no. 2, 415-432.

\bibitem{PQS} P. Pol\'{a}\v{c}ik, P. Quittner and P. Souplet, {\it Singularity and decay estimates in superlinear problems via Liouville-type theorems. Part I: Elliptic systems}, Duke Math. J., \textbf{139} (2007), 555-579.

\bibitem{PS} Q. Phan and P. Souplet, {\it Liouville-type theorems and bounds of solutions of Hardy-H\'{e}non equations}, J. Diff. Equations, \textbf{252} (2012), 2544-2562.

\bibitem{RW} W. Reichel and T. Weth, {\it A priori bounds and a Liouville theorem on a half-space for higher-order elliptic Dirichlet problems}, Math. Z., \textbf{261} (2009), 805-827.

\bibitem{RZ} W. Reichel and H. Zou, {\it Non-existence results for semilinear cooperative elliptic systems via moving spheres}, J. Diff. Equations, \textbf{161} (2000), 219-243.

\bibitem{Si} L. Silvestre, {\it Regularity of the obstacle problem for a fractional power of the Laplace operator}, Comm. Pure Appl. Math., \textbf{60} (2007), 67-112.

\bibitem{S} P. Souplet, {\it The proof of the Lane-Emden conjecture in four space dimensions}, Adv. Math., \textbf{221} (2009), no. 5, 1409-1427.

\bibitem{Souto} M. A. S. Souto, {\it A priori estimates and existence of positive solution of nonlinear cooperative elliptic systems}, Differential and Integral Equations, \textbf{8} (1995), no. 5, 1245-1258.

\bibitem{SZ} J. Serrin and H. Zou, {\it Non-existence of positive solutions of Lane-Emden systems}, Diff. Integral Equations, \textbf{9} (1996), no. 4, 635-653.

\bibitem{SX} L. Sun and J. Xiong, {\it Classification theorems for solutions of higher order boundary conformally invariant problems, I}, J. Funct. Anal., \textbf{271} (2016), 3727-3764.

\bibitem{WX} J. Wei and X. Xu, {\it Classification of solutions of higher order conformally invariant equations}, Math. Ann., \textbf{313} (1999), no. 2, 207-228.

\bibitem{ZCCY} R. Zhuo, W. Chen, X. Cui and Z. Yuan, {\it A Liouville theorem for the fractional Laplacian}, preprint, arXiv:1401.7402.

\end{thebibliography}
\end{document}